\documentclass[12pt, a4 paper]{amsart}
\usepackage{amssymb,amsmath,amsfonts,latexsym, tikz}
\usepackage{bm}
\usepackage{comment}
\usepackage{xcolor}
\newcommand{\newsection}[1]{\setcounter{equation}{0} \section{#1}}
\newcommand{\bea}{\begin{eqnarray}}
\newcommand{\eea}{\end{eqnarray}}

\newcommand{\clb}{\mathcal{B}}

\newcommand{\clh}{\mathcal{H}}
\newcommand{\clk}{\mathcal{K}}
\newcommand{\cll}{\mathcal{L}}
\newcommand{\clm}{\mathcal{M}}
\newcommand{\cln}{\mathcal{N}}

\newcommand{\cls}{\mathcal{S}}

\newcommand{\D}{\mathbb{D}}

\newcommand{\raro}{\rightarrow}

\def\textmatrix#1&#2\\#3&#4\\{\bigl({#1 \atop #3}\ {#2 \atop #4}\bigr)}
\def\dispmatrix#1&#2\\#3&#4\\{\left({#1 \atop #3}\ {#2 \atop #4}\right)}
\newcommand{\be}{\begin{equation}}
\newcommand{\ee}{\end{equation}}
\newcommand{\ben}{\begin{eqnarray*}}
\newcommand{\een}{\end{eqnarray*}}

\newcommand{\bi}{\begin{itemize}}
\newcommand{\ei}{\end{itemize}}

\newcommand{\T}{\mathbb{T}}

\theoremstyle{definition}

\newtheorem*{thm*}{Theorem}
\newtheorem*{quesn*}{Question}

\theoremstyle{plain}

\newtheorem{thm}{Theorem}[section]
\newtheorem{cor}[thm]{Corollary}
\newtheorem{lem}[thm]{Lemma}
\newtheorem{prop}[thm]{Proposition}
\theoremstyle{definition}
\newtheorem{defn}[thm]{Definition}
\newtheorem*{defn*}{Definition}

\newtheorem{ex}[thm]{Example}

\numberwithin{equation}{section}

\let\phi=\varphi

\begin{document}

\title[Finite-Rank Commutators]{Finite-Rank Commutators of projections onto shift-invariant subspaces}

\author[Biswas]{Rounak Biswas}
\address{Department of Mathematics, Indian Institute of Technology Palakkad, Kerala - 678623, India.}
\email{xyzrounak3@gmail.com}

\author[Sarkar] {Srijan Sarkar }
\address{Department of Mathematics, Indian Institute of Technology Palakkad, Kerala - 678623, India.}
\email{srijansarkar@gmail.com, srijans@iitpkd.ac.in}

\subjclass[2020]{47A46, 47B47, 42B30, 30J05, 47B35, 47A15, 30H05.}

\keywords{Hardy space, inner functions, orthogonal projections, finite--rank commutator}

\begin{abstract}
 In this article, using Halmos' two projections theorem, we completely characterize finite-rank commutators of orthogonal projections onto shift-invariant subspaces $\phi_1 H^2(\D)$ and $\phi_2 H^2(\D)$ of the Hardy space $H^2(\D)$ corresponding to inner functions $\phi_1, \phi_2$ on the unit disc $\D$. Using our methods, we connect the finite-rank commutator with the Fredholmness of the projections $(P_{\phi_1}, P_{\phi_2})$ as introduced by Avron, Seiler and Simon. We conclude with several characterizations on the polydisc.
\end{abstract}

\maketitle

\newsection{Introduction}\label{sec: 1}
The primary objective of this article is to classify pairs of orthogonal projections onto shift-invariant closed subspaces of Hardy spaces for which the commutator has finite--rank. Recall that an orthogonal projection $P$ on a Hilbert space $\clh$ is a bounded self-adjoint operator satisfying the idempotent condition, that is $P^2 = P$. A pair of orthogonal projections $(P, Q)$ on $\clh$ is said to be essentially commuting if the commutator $[P, Q]$ defined by 
\[
[P, Q]:= PQ - QP,
\]
is a compact operator. Essential commutativity of projections is an active topic of study \cite{ACL, ACG} and is related to several fundamental questions in operator theory. Our focus is on a class of projections that plays a fundamental role in the interplay between operator theory and complex function theory. More specifically, we study orthogonal projections onto invariant subspaces of $H^2(\D)$ (the Hardy space over the unit disc $\D$). Recall that 
\[
H^2(\D) := \{f \in \mathcal{O}(\D): \sup_{0 \leq r <1} \int_{\mathbb{T}} |f(rz)|^2 \, d\mu < \infty\},
\]
where $\mu$ is the normalized Lebesgue measure on $\mathbb{T}$ (the unit circle) and
\[
\|f\|: =  \big( \sup_{0 \leq r <1} \int_{\mathbb{T}} |f(rz)|^2 \, d\mu \big)^{\frac{1}{2}} \quad (f \in H^2(\D)).
\]
The collection of bounded analytic functions on $\D$ are contained inside $H^2(\D)$ and is denoted by $H^{\infty}(\D)$. It is a well-known phenomenon that $H^2(\D)$ is isometrically isomorphic to $H^2(\mathbb{T})$, and this correspondence gives rise to a huge collection of bounded operators coming from functions in $L^{\infty}(\T)$. More precisely, corresponding to a $\phi \in L^{\infty}(\T)$, the Toeplitz operator $T_{\phi}$ on $H^2(\D)$ is defined by $T_{\phi} f = P_{H^2(\D)} L_{\phi}|_{H^2(\D)}$, where $L_{\phi}$ is the multiplication operator on $L^2(\mathbb{T})$. These operators behave like a multiplication operator, when the symbols $\phi$ come from $H^{\infty}(\D)$, that is, $T_f g= fg$, when $f \in H^{\infty}(\D)$ and $g \in H^2(\D)$. One of the most simple yet profound property connecting the behaviour of functions with operator-theoretic properties of Toeplitz operators is the following: a Toeplitz operator $T_{\phi}$ is an isometry if and only if $\phi$ and is an \textit{inner} function \cite{BH}. Recall that inner functions $\phi$ are functions in $H^{\infty}(\D)$, satisfying $|\phi(e^{it})|=1$, almost everywhere on $\mathbb{T}$. Another important connection comes via the Toeplitz operator $T_z$ also known as the shift-operator. It was shown by Beurling \cite{Beurling}, that any $T_z$-invariant closed subspace of $\cls \subseteq H^2(\D)$ is of the form $\cls = \theta H^2(\D)$, where $\theta$ is an inner function. Using Toeplitz operators we can explicitly write down the orthogonal projection to any shift-invariant subspace. In particular, let $P_{\theta}$ denote the orthogonal projection onto the subspace $\theta H^2(\D)$, then $P_{\theta} = T_{\theta} T_{\theta}^*$ because $T_{\theta}$ is an isometry. Recently, in \cite[Theorem 2.2]{DPS}, Debnath et al. characterized when a pair of these projections commute. In particular, they proved that $[P_{\phi_1}, P_{\phi_2}]=0$ if and only if $\phi_1$ divides $\phi_2$ or $\phi_2$ divides $\phi_1$. It is an important instance, where operator-theoretic properties on pair of projections induces a factorization of inner functions. This motivates us to study, the following general question.

\begin{quesn*}
For which pairs of inner functions $\phi_1,\phi_2$ on $\D$, does the commutator $[P_{\phi_1},P_{\phi_2}]$ has finite--rank?
\end{quesn*}

Our approach to this question is based on the following: \begin{enumerate}
\item[(i)] finding representations of the orthogonal projections $P_{\phi_1}, P_{\phi_2}$ using Halmos' two projection theorem (see Theorem \ref{Halmos_two_subspace}),
\item[(ii)] simplifying the Halmos decomposition by factoring out the common inner divisor of $\phi_1$ and $\phi_2$, as this operation preserves the finite-rank property of the commutator $[P_{\phi_1}, P_{\phi_2}]$ (see Lemma \ref{gcd}).,
\item[(ii)] use Coburn's theorem for Toeplitz operators to simplify the decomposition further.
\end{enumerate}

Following the above procedure, we have completely characterized when the commutator $[P_{\phi_1}, P_{\phi_2}]$ becomes finite-rank. In Corollary \ref{without gcd1}, we prove that if $\phi=\mbox{gcd}(\phi_1,\phi_2)$, then
\[
[P_{\phi_1}, P_{\phi_2}] \text{ is of finite-rank if and only if } \tfrac{\phi_1}{\phi} \text{ or } \tfrac{\phi_2}{\phi} \text{ is a finite Blaschke product},
\]
and the rank of $
 [P_{\phi_1}, P_{\phi_2}] =2 \text{min}\{\text{deg}(\tfrac{\phi_1}{\phi}), \text{deg}(\tfrac{\phi_2}{\phi})\}
$. Here the degree of an inner function $\phi$ is defined to be the number of its zeros inside $\D$, counted with multiplicity, whenever $\phi$ is a finite Blaschke product; otherwise, we set $\deg(\phi)=\infty$. 

Interestingly, finite-rank commutators of these projections have a connection with the Fredholm property of pairs of projections as introduced by Avron, Seiler and Simon in \cite{ASS}. Following \cite{ASS}, a pair of orthogonal projections $(P,Q)$ on a separable Hilbert space is defined to be a Fredholm pair if the operator $QP$, viewed as a map from the range of $P$ to the range of $Q$ is a Fredholm operator. In Corollary \ref{lastfredhlom}, we prove that a pair of projections $(P_{\phi_1}, P_{\phi_2})$ with finite--rank commutator become a Fredholm pair if and only if and both $\tfrac{\phi_1}{\phi}$ and $\tfrac{\phi_2}{\phi}$ are finite Blaschke products. Moreover, in this case,
\[
\mbox{index}(P_{\phi_1},P_{\phi_2})=\text{deg}(\tfrac{\phi_2}{\phi})-\text{deg}(\tfrac{\phi_1}{\phi}).
\]

We have several interesting results in the polydisc case as well. The motivation comes from the recent result by Debnath et al. \cite{DPS}, characterizing the commutativity of the orothogonal projections $P_{\phi_1}$ and $P_{\phi_2}$ on $H^2(\D^n)$ (that is, the Hardy space over unit polydisc $\D^n$). In particular, they prove that $[P_{\phi_1}, P_{\phi_2}] = 0$ if and only if there exist inner functions $\psi, \tilde{\phi_1}, \tilde{\phi_2}$, where $\tilde{\phi_1}$ and $\tilde{\phi_2}$ depends on disjoint subset of variables in $\{z_1, \ldots, z_n\}$, such that 
\[
\phi_1 = \psi \, \tilde{\phi_1} \text{ and } \phi_2 = \psi \, \tilde{\phi_2}.
\]
This led Debnath et al. to completely answer (\cite[Theorem 3.2]{DPS}) the following question raised by R.G.Douglas: when does the product $(I_{H^2(\D^n)} - P_{\phi_1})(I_{H^2(\D^n)} - P_{\phi_2})$ become a finite-rank projection. In the recent article \cite{ZTLYZ}, Zhang et al. observed that the compactness of this product is a sufficient condition to conclude that it must be a finite-rank projection. Their method depends on the behaviour of joint defect operator $\Delta_T$ corresponding to pair of isometries $(T_1, T_2)$ as observed by Guo and Wang in \cite{GW}. In Theorem \ref{compact-proj1} and Theorem \ref{compact-proj2}, we give new proofs of the observations by Zhang et al. using our methods in a completely different approach.

For $n>1$, the problem of characterizing pairs of projections $P_{\phi_1}, P_{\phi_2}$ on $H^2(\D^n)$ for which the commutator $[P_{\phi_1},P_{\phi_2}]$ has finite--rank is difficult. As it will become apparent from our approach, the first major obstacle is the absence of an analogue of Coburn's theorem (for instance, see \cite{CIL}). Secondly, the lack of a fair understanding of when two inner functions on $\D^n$ are \text{co-prime}. This is another place where we make a contribution by identifying a new sufficient criterion for inner functions to be co-prime (see Definition \ref{co-pr}). Surprisingly, in certain cases, we can bypass Coburn's theorem by using this co-prime property of inner functions. In Theorem \ref{polyfinite}, we prove that if $\phi_1$ and $\phi_2$ are co-prime, then $[P_{\phi_1}, P_{\phi_2}]$ has finite--rank if and only if it is identically zero. Equivalently, this occurs if and only if $ \phi_1,\phi_2$ depend on disjoint sets of variables. A particular case where this is applicable is when $\phi_1, \phi_2$ are inner functions in the bidisc algebra $\mathcal{A}(\D^2)$. Specifically, in Corollary \ref{disc_finite}, we prove that if $\phi_1, \phi_2 \in \mathcal{A}(\D^2)$, then $[P_{\phi_1}, P_{\phi_2}]$ has finite--rank if and only if $[P_{\phi_1}, P_{\phi_2}]=0$. 

Let us now outline the organization of this article. In Section \ref{sec1}, we review Halmos' two projections theorem for abstract projections and establish few preliminary results concerning the projections $P_{\phi_1}$ and $P_{\phi_2}$ on $H^2(\D)$ that will be used throughout the paper. Section \ref{sec2} is devoted to the characterization of finite-rank commutators. As an application of our main result, we refine Bessonov's criterion \cite{RV} for the characterization of finite-rank truncated Toeplitz operators in the case of inner symbols (see Theorem \ref{tto}). In Section \ref{sec3}, we characterize when a pair of projections $(P_{\phi_1},P_{\phi_2})$ on $H^2(\D)$ with finite-rank commutator becomes a Fredholm pair. Next, in section \ref{sec5}, we find a sufficient criterion for inner functions to be co-prime over $\D^n$ and establish explicit cases for co-primeness. Lastly, in Section \ref{sec6}, we prove that for co-prime inner functions on $\D^n$, the commutator $[P_{\phi_1}, P_{\phi_2}]$ has finite rank if and only if it is trivial. Moreover, we characterize when the product of projections $(I_{H^2(\D^n)} - P_{\phi_1}) (I_{H^2(\D^n)} - P_{\phi_2})$ become a compact operator on $H^2(\D^n)$.

\section{Preliminaries}\label{sec1}

In this section, our aim is to describe the commutator $[P_{\phi_1}, P_{\phi_2}]$ corresponding to inner functions $\phi_1, \phi_2$ using Halmos’s two projections theorem. First, let us briefly explain Halmos's characterization for abstract Hilbert spaces. Let $P_1,P_2$ be two orthogonal projections onto closed subspaces $\clm, \cln \subseteq \mathcal{H}$, respectively. The first step is to find an orthogonal decomposition of $\clh$ using componenets of these subspaces. In particular, let us denote,
\[
\cll_{00} :=\clm \cap \cln,\quad \cll_{01} := \clm \cap \cln^{\perp},\quad \cll_{10} :=\clm^{\perp}\cap \cln,\quad \cll_{11} :=\clm^{\perp}\cap \cln^{\perp}.
\]
Moreover, if $\cll_0:={\clm}\ominus (\cll_{00}\oplus \cll_{01})$ and $\cll_1 :={\clm}^{\perp}\ominus (\cll_{10}\oplus \cll_{11})$, Halmos observed that
\begin{equation}\label{Halmos_decomp}
 \mathcal{H}=\cll_{00}\oplus \cll_{01}\oplus \cll_{10}\oplus \cll_{11}\oplus \cll_0\oplus \cll_1.
\end{equation}
Using this decomposition, Halmos \cite[Theorem 2]{Halmos} found representations of the projections $P_1, P_2$ in the following manner (see \cite[Theorem 1.2]{BS} for this description).
\begin{thm}[Halmos \cite{Halmos}]\label{Halmos_two_subspace}
If one of the spaces $\cll_0$ and $\cll_1$ is non-trivial, then $\mbox{dim } \cll_0 = \mbox{dim } \cll_1$. Moreover, there exists a unitary operator $U:\cll_1\rightarrow \cll_0$ such that
   
    \begin{align*}
        {P_1}&=(I_{\cll_{00}},I_{\cll_{01}},0_{\cll_{10}},0_{\cll_{11}})\oplus {W}^*\begin{bmatrix}
            I & 0 \\
            0 & 0
        \end{bmatrix}{W},\notag\\
        {P_2}&=(I_{\cll_{00}},0_{\cll_{01}},I_{\cll_{10}},0_{\cll_{11}})\oplus {W}^*\begin{bmatrix}
            {T} & {\sqrt{T(I -T)}}\\
            {\sqrt{T(I - T)}} & {I -T}
        \end{bmatrix}{W},
    \end{align*}
where ${W}=\text{diag}(I_{\cll_0},{U})$.  Here ${T}:\cll_0\rightarrow \cll_0$ is the positive contraction defined by $$T=P_{{\cll_0}}P_2|_{\cll_0},$$ satisfying $\ker~T=\ker~(I-{T})=\{0\}.$
\end{thm}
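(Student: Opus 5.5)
We prove Halmos' theorem by reducing to the "generic position" part of the two subspaces and then diagonalising there.

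\textbf{Step 1: reducing subspaces.} The four intersections $\cll_{00},\cll_{01},\cll_{10},\cll_{11}$ are mutually orthogonal and each reduces both $P_1$ and $P_2$. On each of them $P_1$ and $P_2$ act as $0$ or $I$, as listed in the statement. Hence the orthocomplement $\clk:=\cll_0\oplus\cll_1$ also reduces both projections. Inside $\clk$, set $\clm_0:=\clm\ominus(\cll_{00}\oplus\cll_{01})=\cll_0$ and $\clm_0^\perp\cap\clk=\cll_1$, and let $\cln_0:=\cln\cap\clk$. Then $\clm_0\cap\cln_0$, $\clm_0\cap\cln_0^\perp$, $\clm_0^\perp\cap\cln_0$ and $\clm_0^\perp\cap\cln_0^\perp$ are all trivial in $\clk$. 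This means the pair is in generic position there, and it reduces the proof to that case.

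\textbf{Step 2: the operator $T$.} Define $T:=P_{\cll_0}P_2|_{\cll_0}$, a positive contraction. If $Tx=0$ with $x\in\cll_0$, then $\|P_2x\|^2=\langle Tx,x\rangle=0$, so $x\in\clm_0\cap\cln_0^\perp=\{0\}$. If $Tx=x$, then $\|P_2x\|=\|x\|$, so $x\in\clm_0\cap\cln_0=\{0\}$. Thus $\ker T=\ker(I-T)=\{0\}$.

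\textbf{Step 3: the unitary $U$.} Consider the off-diagonal block $B:=P_{\cll_1}P_2|_{\cll_0}:\cll_0\to\cll_1$. Since $P_2$ is a projection, writing $P_2=\begin{bmatrix}T&B^*\\ B&S\end{bmatrix}$ on $\cll_0\oplus\cll_1$, the identity $P_2^2=P_2$ gives
\[
T^2+B^*B=T,\quad BB^*+S^2=S,\quad TB^*+B^*S=B^*.
\]
Hence $B^*B=T(I-T)$, so $|B|=\sqrt{T(I-T)}$.

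Take the polar decomposition $B=V|B|$. Since $\ker|B|=\ker T\cup\ker(I-T)$ is trivial, $|B|$ has dense range and $V$ is an isometry from $\cll_0$ into $\cll_1$. Its range is $\overline{\operatorname{ran}B}$. If $y\in\cll_1$ is orthogonal to this range, then $B^*y=0$. The third identity gives $B^*(I-S)=TB^*$; a symmetric argument (repeating the analysis with the roles of $\cll_0$ and $\cll_1$ exchanged, using $\ker S=\ker(I-S)=0$, which comes from the triviality of $\clm_0^\perp\cap\cln_0^{\perp}$ and $\clm_0^\perp\cap\cln_0$) forces $y=0$. So $V$ is unitary, and we set $U:=V^*:\cll_1\to\cll_0$. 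In particular $\dim\cll_0=\dim\cll_1$, and if one of them is nonzero then so is the other, since $B$ is then injective with dense range.

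\textbf{Step 4: the block form.} It remains to verify $S=U^*(I-T)U$. From $TB^*+B^*S=B^*$ we get $B^*S=(I-T)B^*$. Using $B^*=|B|V^*$, this reads $|B|V^*S=(I-T)|B|V^*$. Since $|B|$ commutes with $T$ and is injective, cancelling $|B|$ gives $V^*S=(I-T)V^*$, that is, $S=V(I-T)V^*=U^*(I-T)U$. Conjugating $P_2$ by $W=\operatorname{diag}(I,U)$ then produces the stated matrix, and $P_1$ is clearly $\operatorname{diag}(I,0)$ on this block.

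\textbf{Main obstacle.} The delicate point is Step 3: showing that $V$ is onto $\cll_1$. This requires $B^*$ to be injective, which in turn uses the triviality of the two intersections involving $\clm^\perp$. I would handle it by proving that $\ker B^*\subseteq\ker S\cup\ker(I-S)$. This follows because $BB^*=S(I-S)$, so $B^*y=0$ gives $S(I-S)y=0$; together with the triviality of $\ker S$ and $\ker(I-S)$, this gives $y=0$.
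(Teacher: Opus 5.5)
Your proof is correct. The paper gives no argument of its own for this statement: it quotes the theorem from Halmos \cite{Halmos}, in the form given by B\"ottcher--Spitkovsky \cite{BS}, and uses it as a black box, so there is no in-paper proof to compare against.

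Your route is a standard self-contained one. You split off the four intersections, which reduce both projections. You then write $P_2$ as a $2\times 2$ operator matrix on $\cll_0\oplus\cll_1$ and extract $B^*B=T(I-T)$, $BB^*=S(I-S)$ and $B^*S=(I-T)B^*$ from $P_2^2=P_2$. Finally you take $U$ to be the adjoint of the polar part of $B=P_{\cll_1}P_2|_{\cll_0}$. This keeps everything among bounded operators, and $T$ appears exactly as the compression $P_{\cll_0}P_2|_{\cll_0}$, which is the form the paper relies on later.

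A few small clean-ups:
\begin{itemize}
\item $\ker|B|$ equals $\ker T(I-T)$, not $\ker T\cup\ker(I-T)$. What you actually need is that $T(I-T)x=0$ gives $(I-T)x\in\ker T=\{0\}$, and hence $x\in\ker(I-T)=\{0\}$.
\item Surjectivity of $V$ is fully handled by your final paragraph. There $BB^*=S(I-S)$ is combined with $\ker S=\ker(I-S)=\{0\}$, which you prove exactly as in Step 2 using $\cll_1\perp\cll_{11}$ and $\cll_1\perp\cll_{10}$. So the remark about $B^*(I-S)=TB^*$ in Step 3 is unnecessary.
\item In Step 4, record explicitly that $B=V|B|=U^*\sqrt{T(I-T)}$. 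This is what gives the off-diagonal entries once $P_2|_{\cll_0\oplus\cll_1}$ is conjugated by $W=\mathrm{diag}(I_{\cll_0},U)$.
\end{itemize}
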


This following few corollaries (well-known) are simple application of Halmos' theorem.
\begin{cor}\label{K(I-K)}
Let $(P_1, P_2)$ be a pair of orthogonal projections on $\clh$. Then the commutator $[P_1,P_2]$ is of finite--rank if and only if $T(I - T)$ is of finite--rank.
\end{cor}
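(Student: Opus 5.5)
Our plan is to compute the commutator directly from the block representation in Theorem \ref{Halmos_two_subspace}. On the summand $\cll_{00}\oplus\cll_{01}\oplus\cll_{10}\oplus\cll_{11}$ both $P_1$ and $P_2$ act as diagonal operators built from $0$'s and identities, so they commute there. Hence $[P_1,P_2]$ vanishes on that part, and
\[
[P_1,P_2]=0\oplus W^*\left[\begin{bmatrix} I&0\\0&0\end{bmatrix},\begin{bmatrix} T&S\\S&I-T\end{bmatrix}\right]W,\qquad S:=\sqrt{T(I-T)}.
\]
If $\cll_0=\cll_1=\{0\}$, the commutator is zero. In that case $T$ acts on the zero space, so both sides of the equivalence hold trivially.

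Next we multiply out the $2\times2$ blocks. The product $\begin{bmatrix} I&0\\0&0\end{bmatrix}\begin{bmatrix} T&S\\S&I-T\end{bmatrix}$ equals $\begin{bmatrix} T&S\\0&0\end{bmatrix}$. The product in the reverse order equals $\begin{bmatrix} T&0\\S&0\end{bmatrix}$. Their difference is therefore
\[
\begin{bmatrix} 0&S\\-S&0\end{bmatrix}.
\]
Since $W$ is unitary, the rank of $[P_1,P_2]$ equals the rank of this block matrix, which is $2\,\mathrm{rank}(S)$. In particular, $[P_1,P_2]$ has finite rank if and only if $S=\sqrt{T(I-T)}$ has finite rank.

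The remaining step is to show that for the positive operator $A=T(I-T)$, $\sqrt A$ has finite rank if and only if $A$ does. This is the only point that needs care. We would use that $\ker \sqrt A=\ker A$ and that both operators are self-adjoint, so their ranges have the same closure $(\ker A)^\perp$.

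For the forward direction, suppose $\sqrt A$ has finite rank. Then $A=\sqrt A\,\sqrt A$ has finite rank.

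For the converse, suppose $A$ has finite rank. Then $\overline{\mathrm{ran}\,A}=(\ker A)^\perp$ is finite-dimensional. Since $\mathrm{ran}\sqrt A\subseteq (\ker\sqrt A)^\perp=(\ker A)^\perp$, the operator $\sqrt A$ also has finite rank. Alternatively, one can argue spectrally: $\sqrt A$ is diagonalizable with the same eigenspaces as $A$.

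Combining these steps gives the corollary, together with the identity $\mathrm{rank}[P_1,P_2]=2\,\mathrm{rank}\,T(I-T)$.
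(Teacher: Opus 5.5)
Your proposal is correct and is essentially the paper's argument: the paper likewise reads off $[P_1,P_2]=0\oplus W^*\begin{bmatrix}0&\sqrt{T(I-T)}\\-\sqrt{T(I-T)}&0\end{bmatrix}W$ from Halmos' representation and concludes using the positivity of $T(I-T)$. You additionally spell out the step the paper leaves implicit, namely that a positive operator has finite rank exactly when its square root does, which also yields the count $\mathrm{rank}\,[P_1,P_2]=2\,\mathrm{rank}\,T(I-T)$.
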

    \begin{proof}
        Using Theorem \ref{Halmos_two_subspace}, we get
        \begin{equation}\label{commutator}
        P_1P_2-P_2P_1=(0,0,0,0)\oplus W^*\begin{bmatrix}
            0 & \sqrt{T(I-T)}\\
            -\sqrt{T(I-T)}& 0
        \end{bmatrix}W.
 \end{equation}
    Since $T(I-T)$ is a positive operator, $[P_1, P_2]$ is a finite--rank operator if and only if $T(I-T)$ is a finite--rank operator.
    \end{proof}
    
      \begin{cor}\label{K_finite}
Let $(P_1, P_2)$ be a pair of orthogonal projections on $\clh$, then \\ \vspace{1mm}
$(i)$~the commutator $[P_{\phi_1}, P_{\phi_2}]$ is finite-rank if and only if $\cll_0$ is finite-dimensional;\\ \vspace{1mm}
$(ii)$~ $[P_{\phi_1}, P_{\phi_2}]=0$ if and only if $\cll_0=\{0\}.$
\end{cor}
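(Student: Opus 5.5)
By Corollary \ref{K(I-K)}, it suffices to show that $T(I-T)$ has finite rank if and only if $\cll_0$ is finite-dimensional, and that $T(I-T)=0$ if and only if $\cll_0=\{0\}$. (Here $P_{\phi_1},P_{\phi_2}$ in the statement should be read as the abstract pair $P_1,P_2$.) The whole argument rests on one injectivity fact from Theorem \ref{Halmos_two_subspace}, which we isolate first.

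\emph{Key fact.} The operator $T(I-T)$ on $\cll_0$ is injective. Indeed, $\ker T=\ker(I-T)=\{0\}$. If $T(I-T)x=0$, then $(I-T)x\in\ker T=\{0\}$, so $x\in\ker(I-T)=\{0\}$. The same argument, applied to the positive square root of $T(I-T)$, shows that $\sqrt{T(I-T)}$ is injective as well.

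\emph{Part (i).} If $\cll_0$ is finite-dimensional, then every operator on $\cll_0$ has finite rank. By \eqref{commutator}, the commutator is supported on $\cll_0\oplus\cll_1$, with $\dim\cll_1=\dim\cll_0$, so it has finite rank. For the converse, suppose $T(I-T)$ has finite rank. Since it is injective, its rank equals $\dim\cll_0$ by rank–nullity: an injective map from $\cll_0$ onto a finite-dimensional range forces $\cll_0$ to be finite-dimensional. In fact \eqref{commutator} yields the exact formula
\[
\operatorname{rank}[P_1,P_2]=2\operatorname{rank}\sqrt{T(I-T)}=2\dim\cll_0 .
\]

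\emph{Part (ii).} The commutator vanishes if and only if $\sqrt{T(I-T)}=0$ on $\cll_0$. By injectivity, this happens if and only if $\cll_0=\{0\}$. Conversely, if $\cll_0=\{0\}$, then $\cll_1=\{0\}$ by Halmos' dimension equality, and the commutator is $0$.

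There is no serious obstacle here. The only point needing care is deducing the injectivity of $T(I-T)$ from $\ker T=\ker(I-T)=\{0\}$, which the key fact handles directly.
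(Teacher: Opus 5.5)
Your proposal is correct and follows the paper's argument for (i): since $\ker T=\ker(I-T)=\{0\}$, the operator $T(I-T)$ is injective on $\cll_0$, so it has finite rank exactly when $\cll_0$ is finite-dimensional. For (ii), the paper only cites \cite[Proposition 1.5]{BS}, whereas you derive it directly from \eqref{commutator}, the injectivity of $\sqrt{T(I-T)}$, and Halmos' equality $\dim\cll_0=\dim\cll_1$, which is a valid self-contained substitute.
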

    \begin{proof}
Let us start with the proof for $(i)$. By the above corollary, we already have that $[P_1,P_2]$ has finite--rank if and only if $T(I - T)$ has finite-rank.  Moreover, since $T$ and $I- T$ both are injective operators. Thus, $T(I-T)$ has finite-rank if and only if the domain of $T(I-T)$, that is, $\cll_0$ is finite dimensional. The proof of $(ii)$ follows from \cite[Proposition 1.5]{BS}.
\end{proof}
    
    \begin{cor}\label{K finite}
Let $(P_1, P_2)$ be a pair of orthogonal projections on $\clh$. Then the product $(I - P_1)(I - P_2)$ is compact (or finite--rank) if and only if $(I - P_1) \clh \cap (I - P_2)\clh$ is finite-dimensional and $T$ is compact (or finite--rank).
\end{cor}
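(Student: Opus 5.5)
We compute $(I-P_1)(I-P_2)$ directly from the Halmos decomposition in Theorem \ref{Halmos_two_subspace}. On the four "generic-free" summands, $I-P_1=(0,0,I_{\cll_{10}},I_{\cll_{11}})$ and $I-P_2=(0,I_{\cll_{01}},0,I_{\cll_{11}})$. Their product is therefore $(0,0,0,I_{\cll_{11}})$. On $\cll_0\oplus\cll_1$ we get
\[
W^*\begin{bmatrix} 0&0\\ 0&I\end{bmatrix}\begin{bmatrix} I-T & -\sqrt{T(I-T)}\\ -\sqrt{T(I-T)} & T\end{bmatrix}W
= W^*\begin{bmatrix} 0&0\\ -\sqrt{T(I-T)} & T\end{bmatrix}W .
\]
Here $\cll_{11}=(I-P_1)\clh\cap(I-P_2)\clh$. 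So
\[
(I-P_1)(I-P_2)=(0,0,0,I_{\cll_{11}})\oplus W^*\begin{bmatrix} 0&0\\ -\sqrt{T(I-T)} & T\end{bmatrix}W,
\]
and since $W$ is unitary, compactness (resp. finite rank) of the product is equivalent to that of each direct summand.

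The first summand $I_{\cll_{11}}$ is compact iff it has finite rank iff $\cll_{11}$ is finite-dimensional. For the second summand, call it $X$, we have $X^*X=W^*\begin{bmatrix} T(I-T) & -\sqrt{T(I-T)}\,T\\ -T\sqrt{T(I-T)} & T^2\end{bmatrix}W$. A cleaner route is to look at the $(2,2)$ entry and the second row. If $X$ is compact (resp. finite rank), then compressing to the corner gives that $T$ is compact (resp. finite rank). Conversely, if $T$ is compact then $\sqrt{T}$ is compact, so $\sqrt{T(I-T)}=\sqrt{T}\sqrt{I-T}$ is compact and $X$ is compact. If $T$ has finite rank, then $T^{1/2}$ has the same (finite) range as $T$ (for a positive operator $\operatorname{ran}T^{1/2}$ and $\operatorname{ran}T$ have the same closure, and in finite rank the ranges are closed), so $\sqrt{T}\sqrt{I-T}$ has finite rank and $X$ has finite rank.

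The only mildly delicate point is the finite-rank functional calculus step: $T$ compact or finite rank implies the same for $\sqrt{T}$ and hence for $\sqrt{T(I-T)}$, using that $T$ and $I-T$ commute. Once this is in hand, the corollary follows by combining the two summands. If $\cll_0=\{0\}$ the second summand is absent and the condition on $T$ is vacuous, which is consistent with the statement.
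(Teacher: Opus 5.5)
Your proof is correct and is essentially the paper's argument: the same Halmos-decomposition computation reduces $(I-P_1)(I-P_2)$ to $I_{\cll_{11}}$ on $\cll_{11}$, plus the block operator $X$ you wrote down on $\cll_0\oplus\cll_1$. The only difference is the last step: the paper notes $XX^*=W^*\,\mathrm{diag}(0,T)\,W$ to get both directions at once, whereas you use corner compression for one direction and the square-root functional calculus for the other, and both are valid.
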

\begin{proof}
Using Theorem \ref{Halmos_decomp}, we get
       \begin{equation*} 
 	\begin{split}
       (I-P_1)(I-P_2)
        &=I-P_1-P_2+P_1P_2\\
        &=(0_{\cll_{00}},0_{\cll_{01}},0_{\cll_{10}},I_{\cll_{11}})\oplus \begin{bmatrix}
            0 & 0\\
            -\sqrt{T(I_{}-T)} & T
        \end{bmatrix}.
        \end{split}
       \end{equation*}
Thus, we can conclude that if $(I - P_1)(I - P_2)$ is compact (or finite--rank), $(I - P_1) \clh \cap (I - P_2)\clh$ is finite-dimensional and $\begin{bmatrix}
            0 & 0\\
            -\sqrt{T(I-T)} & T
        \end{bmatrix}$ is compact (or finite--rank). Moreover,
         \[
        \begin{bmatrix}
            0 & 0\\
            -\sqrt{T(I-T)} & T
        \end{bmatrix}\begin{bmatrix}
            0 & 0\\
            -\sqrt{T(I-T)} & T
        \end{bmatrix}^*=\begin{bmatrix}
            0 & 0\\
            0 & T
        \end{bmatrix},
\] 
shows that $\begin{bmatrix}
            0 & 0\\
            -\sqrt{T(I-T)} & T
        \end{bmatrix}$ is compact (or finite-rank) if and only if $T$ is compact (or finite-rank). This completes the proof.
\end{proof}

We will now focus on understanding Halmos's decomposition for shift-invariant subspaces of the Hardy space $H^2(\D)$. In particular, let us consider a pair of $T_z$-invariant closed subspaces $\cls_1, \cls_2 \subseteq H^2(\D)$.  By Beurling's theorem \cite{Beurling} there exists inner functions $\phi_1, \phi_2$ such that $\cls_i = \phi_i H^2(\D)$ for $i=1,2$. Thus, $
P_{\cls_1}  = T_{\phi_1} T_{\phi_1}^*$ and $P_{\cls_2} = T_{\phi_2} T_{\phi_2}^*.$
For the sake of computation, let us denote these projections as $$P_{\phi_1}:= P_{\cls_1} \text{ and } P_{\phi_2}:= P_{\cls_2}. $$

It is evident from Halmos's decomposition that there is a choice on whether we consider $P_{\phi_1}$  as the first projection and $P_{\phi_2}$ as the second, or vice versa. Since we are concerned with the finite-rank commutator $[P_{\phi_1}, P_{\phi_2}]$, the particular choice does not affect whether the commutator has finite--rank.  But for the sake of simplifying the decomposition (\ref{Halmos_decomp}), as will be evident later, we choose a convention. In particular, it is well-known from Coburn's theorem \cite{Coburn} that either
\[
\ker T_{\bar \phi_2 \phi_1} = \{0\} \quad \text{or }\quad \ker T_{\bar \phi_1 \phi_2} = \{0\}. 
\]
\begin{equation}\label{convention}
\textsf{Convention: } P_1 = P_{\phi_1}, P_2 = P_{\phi_2} \textsf{ when } 
\ker T_{\bar \phi_2 \phi_1} = \{0\}.
\end{equation}
So, without the loss of any generality let us assume $P_1 = P_{\phi_1}$ and $P_2 = P_{\phi_2}$, and proceed with Halmos's decomposition. In particular, 
\[
\clm=\text{ range } (P_{\phi_1})=\phi_1 H^2(\mathbb{D}) \text{ and } \cln=\text{ range }(P_{\phi_2})=\phi_2 H^2(\mathbb{D}).
\]
 Consequently, following \cite[Corollary 4.8]{GMR}, we obtain
\begin{equation*} 
    \cll_{00} =~\phi_1 H^2(\mathbb{D} )\cap\phi_2 H^2(\mathbb{D} )=\mbox{lcm}(\phi_1,\phi_2)H^2({\mathbb{D}} )
\end{equation*}
and following \cite[Corollary 5.9]{GMR}, we get
\begin{equation*}
 \cll_{11}=~\left(\phi_1 H^2(\mathbb{D} )\right)^{\perp}\cap\left(\phi_2 H^2(\mathbb{D} )\right)^{\perp}=\left(\mbox{gcd}(\phi_1, \phi_2)H^2({\mathbb{D}} )\right)^{\perp},
\end{equation*}
where the \textit{greatest common divisor} of a pair of inner functions $\phi_1, \phi_2$ or $\mbox{gcd }(\phi_1, \phi_2)$ is defined to be the greatest inner function $\phi$ that divides both $\phi_1$ and $\phi_2$.  
Moreover,
\begin{align*}
\left(\phi_1 H^2(\mathbb{D} )\right)\cap\left(\phi_2 H^2(\mathbb{D} )\right)^{\perp}
    =&~\{\phi_1f:f\in H^2(\mathbb{D} )~\text{and}~\phi_1f\perp \phi_2h~\text{for all }h\in H^2(\mathbb{D} )\}\\
    =&~\{\phi_1f:f\in H^2(\mathbb{D} )~\text{and}~T_{\phi_2}^*T_{\phi_1}f\perp h~\text{for all }h\in H^2(\mathbb{D} )\}\\
    =&~\{\phi_1f:f\in \ker~T_{\bar \phi_2 \phi_1}\},
\end{align*}
and hence,
\begin{equation}\label{L_01}
\cll_{01}= \phi_1 H^2(\mathbb{D}) \cap (\phi_2 H^2(\mathbb{D} ))^{\perp} = \phi_1 \ker~T_{\bar \phi_2 \phi_1}.
\end{equation}
Thus, from our convention based on the Coburn's theorem we get $\cll_{01} = \{0\}$, and therefore,
\begin{equation}\label{L_0}
\cll_0 =~\clm \ominus \big( \cll_{00} \oplus \cll_{01} \big)
    =~\phi_1H^2(\D)\ominus \big(\mbox{lcm}(\phi_1,\phi_2)H^2({\mathbb{D}} ) \big).
\end{equation}
Next, if we assume that $\mbox{gcd}(\phi_1, \phi_2) = 1$,  it is equivalent to $\mbox{lcm}(\phi_1, \phi_2)= \phi_1 \phi_2 $, and thus,
\begin{equation}\label{new L_0}
\cll_0 
    =~\phi_1H^2(\D) \ominus \phi_1 \phi_2H^2(\D) =~\phi_1 \big(H^2(\D) \ominus \phi_2H^2(\D)\big) = \phi_1 \ker T_{\bar \phi_2},
\end{equation}
The following simple observation shows that the assumption $\mbox{gcd}(\phi_1, \phi_2)=1$, has no effect on the finite-rank property of the commutator $[P_{\phi_1}, P_{\phi_2}]$.
\begin{lem}\label{gcd}
Let $\phi_1,\phi_2$ be two inner functions on $\D$, and $\phi=\mbox{gcd }(\phi_1,\phi_2)$. Consider $\psi_i = \frac{\phi_i}{\phi}$, for $i=1,2$. Then $[P_{\phi_1}, P_{\phi_2}]$ has finite--rank if and only if $[P_{\psi_1}, P_{\psi_2}]$ has finite-rank and $$\text{rank of }[P_{\psi_1}, P_{\psi_2}]=\text{rank of }[P_{\phi_1}, P_{\phi_2}].$$
\end{lem}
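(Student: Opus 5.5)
The plan is to show that $[P_{\phi_1},P_{\phi_2}]$ is unitarily equivalent to $[P_{\psi_1},P_{\psi_2}]$ direct sum the zero operator. Since $\phi_i=\phi\psi_i$ and $T_\phi$ is an isometry, we have
\[
P_{\phi_i}=T_{\phi}T_{\psi_i}T_{\psi_i}^*T_{\phi}^* = T_\phi P_{\psi_i}T_\phi^* \quad (i=1,2).
\]
This uses $T_{\phi\psi_i}=T_\phi T_{\psi_i}$, which holds because both symbols are analytic.

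Multiplying and using $T_\phi^*T_\phi=I$ gives
\[
P_{\phi_1}P_{\phi_2}=T_\phi P_{\psi_1}T_\phi^*T_\phi P_{\psi_2}T_\phi^*=T_\phi P_{\psi_1}P_{\psi_2}T_\phi^*.
\]
Subtracting the analogous identity with the roles of $1$ and $2$ interchanged yields
\[
[P_{\phi_1},P_{\phi_2}]=T_\phi\,[P_{\psi_1},P_{\psi_2}]\,T_\phi^*.
\]

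The rank comparison follows from two facts about $T_\phi$. First, $T_\phi^*$ maps onto $H^2(\D)$, since $T_\phi^*T_\phi=I$. Second, $T_\phi$ is injective. Hence, for any operator $A$ on $H^2(\D)$, the range of $T_\phi A T_\phi^*$ is $T_\phi(\operatorname{ran} A)$, whose dimension equals $\dim\operatorname{ran}A$. Taking $A=[P_{\psi_1},P_{\psi_2}]$ shows that the two commutators have the same rank, finite or infinite. In particular one is finite-rank if and only if the other is.

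There is no real obstacle in this argument. The only point needing care is the identity $T_{\phi\psi}=T_\phi T_\psi$ for inner (hence $H^\infty$) symbols. Notably, the argument uses only $\phi_i=\phi\psi_i$, not the fact that $\phi$ is the greatest common divisor.
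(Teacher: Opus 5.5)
Your proposal is correct and follows the paper's own proof: both derive $[P_{\phi_1},P_{\phi_2}]=T_\phi[P_{\psi_1},P_{\psi_2}]T_\phi^*$ from $\phi_i=\phi\psi_i$ and conclude using that $T_\phi$ is an isometry. Your rank step is a tidier version of the paper's kernel/range inclusion argument. It identifies $\operatorname{ran}(T_\phi A T_\phi^*)=T_\phi(\operatorname{ran}A)$ via surjectivity of $T_\phi^*$ and injectivity of $T_\phi$, and this gives equality of ranks directly, including the infinite case.
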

\begin{proof}
Since $\phi_i = \phi \psi_i$ for $i=1,2$, we get
\[
T_{\phi_1}^* T_{\phi_2}  = T_{\psi_1}^*  T_{\phi}^*  T_{\phi} T_{\psi_2} = T_{\psi_1}^* T_{\psi_2}.
\]
Thus, we have
\begin{align*}
[P_{\phi_1}, P_{\phi_2}]  &= T_{\phi_1} T_{\phi_1}^* T_{\phi_2} T_{\phi_2}^* - T_{\phi_2} T_{\phi_2}^*T_{\phi_1} T_{\phi_1}^*\\
&= T_{\phi_1} T_{\psi_1}^* T_{\psi_2} T_{\phi_2}^* - T_{\phi_2} T_{\psi_2}^* T_{\psi_1} T_{\phi_1}^*\\
&= T_{\phi} \left( T_{\psi_1} T_{\psi_1}^* T_{\psi_2} T_{\psi_2}^* - T_{\psi_2} T_{\psi_2}^* T_{\psi_1} T_{\psi_1}^* \right) T_{\phi}^*\\
&= T_{\phi}[P_{\psi_1}, P_{\psi_2}]  T_{\phi}^*.
\end{align*}
Since, $\phi$ is an inner function, $T_{\phi}$ is an isometry, and therefore, $[P_{\phi_1}, P_{\phi_2}] $ is of finite-rank if and only if $[P_{\psi_1}, P_{\psi_2}]$ is of finite-rank. Finally, let us first observe that the above equality implies
\[
\phi \ker [P_{\psi_1}, P_{\psi_2}] \subseteq \ker [P_{\phi_1}, P_{\phi_2}] \text{ and } \phi \mbox{ ran } [P_{\psi_1}, P_{\psi_2}] \subseteq \mbox{ran } [P_{\phi_1}, P_{\phi_2}].
\]
These inclusions further imply that $\mbox{rank }[P_{\phi_1}, P_{\phi_2}] \leq \mbox{rank }[P_{\psi_1}, P_{\psi_2}]$ and $\mbox{rank }[P_{\psi_1}, P_{\psi_2}] \leq \mbox{rank }[P_{\phi_1}, P_{\pi_2}]$, which together implies the equality of the ranks. This completes the proof.
\end{proof}

\section{ finite-rank commutator}\label{sec2}

In this section, we provide a complete characterization of the finite-rank commutator $[P_{\phi_1}, P_{\phi_2}]$  associated with the inner functions $\phi_1, \phi_2$. Since most of the results will involve finite Blaschke products, let us recall that an inner function $\phi$ on $\D$ is a finite Blaschke product if there exists a $\zeta \in \mathbb{T}$, $n \in \mathbb{N}$ and $\{a_1, \ldots, a_n\} \in \D$ such that
\[
\phi(z) = \zeta \prod_{i=1}^n \frac{z - a_i}{1 - \bar a_i z} \quad (z \in \D).
\]
We begin by establishing the following two results, which will serve as essential tools in the subsequent analysis.

\begin{lem}\label{zeroes}
Let $\phi_1, \phi_2$ be distinct inner functions on $\D$.  If $\phi_2$ is a finite Blaschke product and $\mbox{deg } \phi_2 \leq \mbox{deg }\phi_1$, then $\ker T_{\bar \phi_2 \phi_1} = 0$.
\end{lem}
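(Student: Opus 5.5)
The plan is to describe $\ker T_{\bar\phi_2\phi_1}$ through the model space of $\phi_2$, and then use a degree count on inner factors to show that a nonzero kernel element cannot exist. Let $n=\deg\phi_2$ and let $a_1,\dots,a_n$ be the zeros of $\phi_2$.

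\emph{Step 1: reduce to the model space.} First I will show that $f\in\ker T_{\bar\phi_2\phi_1}$ if and only if $\phi_1 f\in\ker T_{\bar\phi_2}=H^2(\D)\ominus\phi_2H^2(\D)$. This follows from the factorization $T_{\bar\phi_2\phi_1}=T_{\phi_2}^*T_{\phi_1}$, which holds because $\phi_1\in H^\infty(\D)$. It is the same computation the paper used to derive \eqref{L_01}.

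\emph{Step 2: identify the model space explicitly.} Since $\phi_2$ is a finite Blaschke product, the model space $H^2(\D)\ominus\phi_2H^2(\D)$ is $n$-dimensional. I will use the standard description
\[
H^2(\D)\ominus\phi_2H^2(\D)=\Big\{\tfrac{p(z)}{\prod_{i=1}^n(1-\bar a_i z)} : p \text{ a polynomial}, \ \deg p\le n-1\Big\}.
\]
To check it, note that $\ker T_{\bar\phi_2}$ is spanned by the reproducing kernels at the $a_i$, together with their derivatives when zeros repeat. Both sides have dimension $n$, and each such kernel has the displayed form. Each element of the right-hand side extends analytically across $\overline{\D}$.

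\emph{Step 3: the degree count.} Suppose $0\ne f\in\ker T_{\bar\phi_2\phi_1}$. By Steps 1 and 2,
\[
\phi_1 f=\frac{p}{q}, \qquad q:=\prod_{i=1}^n(1-\bar a_i z),
\]
with $p\ne 0$ and $\deg p\le n-1$. The denominator $q$ has no zeros on $\overline{\D}$. Hence the inner–outer factorization of $p/q$ has as inner factor the finite Blaschke product formed from the zeros of $p$ in $\D$, which has degree at most $n-1$.

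On the other hand, $\phi_1$ divides the inner part of $\phi_1 f$. So $\phi_1$ divides a finite Blaschke product of degree at most $n-1$, and therefore $\phi_1$ is itself a finite Blaschke product with $\deg\phi_1\le n-1<n\le\deg\phi_1$. This is a contradiction, and it also covers $\deg\phi_1=\infty$, since then $\phi_1$ could not divide a finite Blaschke product at all. Hence the kernel is $\{0\}$. The distinctness hypothesis is not actually needed: when $\phi_1=\phi_2$ the operator is the identity.

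The main point needing care is Step 2, namely the explicit description of the model space of a finite Blaschke product when zeros are repeated. I would handle it by the dimension count. An alternative is to verify directly that $h\in H^2(\D)$ satisfies $\bar\phi_2 h\in\overline{zH^2(\D)}$ exactly for $h$ of the displayed form, by writing $\bar\phi_2=q/(\zeta\prod(z-a_i))$ on $\T$.
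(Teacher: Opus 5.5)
Your proposal is correct and follows essentially the paper's route: you pass to $\phi_1 h\in\ker T_{\phi_2}^*$, use the explicit description of the model space of a degree-$n$ finite Blaschke product as $\{p/q:\deg p\le n-1\}$, and reach a contradiction by a degree count. Where the paper counts zeros of $\phi_1 h$, you look at the inner factor of $p/q$. That is slightly cleaner, because it also covers the case where $\phi_1$ has a nontrivial singular inner factor, which the paper dismisses as ``obvious'' but which counting zeros alone does not handle.
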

\begin{proof}
Suppose $h\in \ker T_{\bar\phi_2\phi_1}$, then $\phi_1 h \in \ker T_{\phi_2}^*$. Let $\mbox{deg }\phi_2 = n$ and $\{\alpha_1,\alpha_2,\ldots,\alpha_n\}$ be the zeros of  $\phi_2$, counted with multiplicity. Then following \cite[Corollary 5.18]{GMR},  we get     
\begin{align*} 
                 \ker T_{\phi_2}^* = H^2(\mathbb{D})\ominus \phi_2 H^2(\mathbb{D})=\left\{\frac{a_0+a_1z+a_2z^2+\cdots+a_{n-1}z^{n-1}}{(1-\bar\alpha_1z)(1-\bar\alpha_2z)\cdots(1-\bar\alpha_nz)}:a_j\in\mathbb{C}\right\}.
\end{align*} 
Thus, any function in $\ker T_{\phi_2}^*$ must have at most $n-1$ zeroes and therefore, $\phi_1 h$ has at most $n-1$ zeroes. Now if $\mbox{deg}(\phi_1)=\infty$, then obviously $\phi_1h\notin \ker T^*_{\phi_2}.$ On the other hand, if $\mbox{deg}(\phi_1)$ is finite then $\phi_1 h$ has at least $n$-many zeroes since $\mbox{deg }\phi_1 \geq \mbox{deg }\phi_2=n$. This is a contradiction to $\phi_1 h \in \ker T_{\phi_2}^*$ unless $h=0$, which implies that $\ker T_{\bar \phi_2 \phi_1} = 0$.  This completes the proof.
\end{proof}
 {\begin{lem}\label{phi1,phi2 finite}
Let $\phi_1,\phi_2$ be two inner functions on $\D$ with $\mbox{gcd }(\phi_1,\phi_2)=1$ and $\ker T_{\bar\phi_2\phi_1}=\{0\}$. If $\phi_2$ is a finite Blaschke product then $\phi_2H^2\cap (H^2\ominus \phi_1 H^2)$ is a finite-dimensional subspace if and only if $\phi_1$ is a finite Blaschke product. 
\end{lem}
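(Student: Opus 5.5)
The statement splits into two implications. One is immediate from the dimension of model spaces. The other reduces to a finite-codimension argument. Throughout, write $K_{\theta}:=H^2(\D)\ominus \theta H^2(\D)=\ker T_{\theta}^*$ for an inner function $\theta$.

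\emph{Sufficiency.} Suppose $\phi_1$ is a finite Blaschke product with $\deg \phi_1=m$. By the description of $\ker T_{\phi_2}^*$ from \cite[Corollary 5.18]{GMR} used in the proof of Lemma \ref{zeroes}, applied now to $\phi_1$, the space $K_{\phi_1}$ consists of rational functions $p(z)/\prod_{j=1}^m(1-\bar\beta_j z)$ with $\deg p\le m-1$. Hence $\dim K_{\phi_1}=m<\infty$. Consequently the subspace $\phi_2H^2(\D)\cap K_{\phi_1}$ of $K_{\phi_1}$ is finite-dimensional.

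\emph{Necessity.} I argue by contraposition: suppose $\phi_1$ is not a finite Blaschke product. The first step is to note that $K_{\phi_1}$ is infinite-dimensional. Indeed, if $\dim K_{\phi_1}<\infty$, then $K_{\phi_1}$ is a finite-dimensional $T_z^*$-invariant subspace. The restriction of $T_z^*$ to it then has eigenvectors, which must be reproducing kernels $k_{\lambda}$. This forces $\phi_1$ to have only finitely many zeros, counted with multiplicity, and no singular inner factor, that is, $\phi_1$ is a finite Blaschke product. Equivalently, one can use the standard fact that $\dim K_{\theta}=\deg\theta$ for every inner function $\theta$, with the convention $\deg\theta=\infty$ for non-finite-Blaschke $\theta$.

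The second step uses that $\phi_2$ is a finite Blaschke product of degree $n$. By the same description from \cite[Corollary 5.18]{GMR}, $\phi_2H^2(\D)$ is a closed subspace of codimension $n$ in $H^2(\D)$, namely $\phi_2 H^2(\D)=\ker P_{K_{\phi_2}}$. Now consider the restriction
\[
P_{K_{\phi_2}}|_{K_{\phi_1}}: K_{\phi_1}\to K_{\phi_2}.
\]
Its kernel is exactly $\phi_2H^2(\D)\cap K_{\phi_1}$, and its range has dimension at most $n$. By rank–nullity, the kernel has codimension at most $n$ in the infinite-dimensional space $K_{\phi_1}$. Therefore $\phi_2H^2(\D)\cap K_{\phi_1}$ is infinite-dimensional, which proves the contrapositive.

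The only genuinely nontrivial ingredient is the first step of the necessity part: $\dim K_{\phi_1}<\infty$ forces $\phi_1$ to be a finite Blaschke product. I would quote this as a standard fact about model spaces, as in \cite{GMR}. The hypotheses $\gcd(\phi_1,\phi_2)=1$ and $\ker T_{\bar\phi_2\phi_1}=\{0\}$ are not needed for this equivalence. They are presumably included because of how the lemma is used later, where $\phi_2H^2(\D)\cap K_{\phi_1}=\cll_{10}$ in the Halmos decomposition under the convention \eqref{convention}.
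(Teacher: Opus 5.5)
Your proof is correct, but it takes a different and more elementary route than the paper.

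\textbf{The paper's route.} The paper works inside the Halmos decomposition. Under the convention (\ref{convention}) it writes $(\phi_1H^2)^\perp=\cll_{11}\oplus\cll_{10}\oplus\cll_1$ and then:
\begin{itemize}
\item it uses $\gcd(\phi_1,\phi_2)=1$ to get $\cll_{11}=\{0\}$;
\item it uses both hypotheses, via (\ref{new L_0}), to identify $\cll_0=\phi_1 K_{\phi_2}$, which is finite-dimensional;
\item it invokes $\dim\cll_1=\dim\cll_0$ from Theorem \ref{Halmos_two_subspace}, so that $\cll_{10}=\phi_2H^2\cap K_{\phi_1}$ must carry the infinite dimension of $K_{\phi_1}$.
\end{itemize}

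\textbf{Your route.} You bypass all of this. You observe that $\phi_2H^2\cap K_{\phi_1}$ is the kernel of $P_{K_{\phi_2}}|_{K_{\phi_1}}$, a map into the $\deg\phi_2$-dimensional space $K_{\phi_2}$. Hence it has codimension at most $\deg\phi_2$ in $K_{\phi_1}$.

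\textbf{What each buys.} Your argument does not depend on Halmos' theorem and gives a quantitative codimension bound. It also shows, as you note, that the hypotheses $\gcd(\phi_1,\phi_2)=1$ and $\ker T_{\bar\phi_2\phi_1}=\{0\}$ are superfluous for the lemma itself. In the paper they serve only to identify $\cll_{11}$ and $\cll_0$, so that the statement fits the $\cll_{10}$ form used later in Theorems \ref{tto} and \ref{mainfredhlom}.

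\textbf{Shared ingredient.} Both arguments rest on the standard fact that $K_\theta$ is finite-dimensional only when $\theta$ is a finite Blaschke product. Your eigenvector sketch of this is incomplete as written: the existence of eigenvectors $k_\lambda$ alone does not exclude a singular factor or infinitely many zeros. One needs, for instance, that the minimal polynomial of $T_z^*|_{K_{\phi_1}}$ gives $K_{\phi_1}\subseteq K_B$ for a finite Blaschke product $B$, whence $\phi_1\mid B$. Citing the fact from \cite{GMR}, as the paper implicitly does, is fine.
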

\begin{proof}
If $\phi_1$ is a finite Blaschke then obviously $\phi_2H^2\cap (H^2\ominus \phi_1 H^2)$ is a finite dimensional space. For proving the other direction, suppose $\phi_1$ is not a finite Blasckhe product but $\phi_2H^2\cap (H^2\ominus \phi_1 H^2)$ is finite dimensional. Since $\ker T_{\bar\phi_2\phi_1}=\{0\}$, following our convention (\ref{convention}) and (\ref{Halmos_decomp}), we get
\[
\clm^{\perp} = (\phi_1 H^2(\mathbb{D}))^\perp=\cll_{11} \oplus \cll_{10} \oplus \cll_1.
\]
Here, $\cll_{11}=(\phi_1H^2(\mathbb{D}))^\perp\cap(\phi_2H^2(\mathbb{D}))^\perp = (\mbox{gcd }(\phi_1, \phi_2) H^2(\mathbb{D}))^\perp  = \{0\}$. Also, $\phi_1H^2(\mathbb{D})^\perp$ is an infinite-dimensional subspace since $\phi_1$ is assumed not to be a finite Blaschke product. Moreover, since $\phi_2$ is finite Blaschke, then $\cll_0=\phi_1(H^2(\mathbb{D}\ominus \phi_2H^2(\mathbb{D})))$ is a finite-dimensional space (by \ref{new L_0}). Hence, by Theorem \ref{Halmos_two_subspace}, $\cll_1$ is also finite-dimensional space. This further implies that $\cll_{10}=\phi_2H^2\cap (H^2\ominus \phi_1 H^2)$ must be finite-dimensional, which is a contradiction to our assumption. This completes the proof.
\end{proof}}
We are now ready to prove the main result on finite-rank commutator. For this purpose, let us recall that a Hankel operator $H_\phi: H^2(\D) \raro H^2(\D)$ is defined by $H_{\phi} := P_{H^2(\D)^{\perp}} L_{\phi}|_{H^2(\D)}$ corresponding to $\phi \in L^{\infty}$, where $L_{\phi}$ is the multiplication operator on $L^2(\mathbb{T})$.
\begin{thm}\label{finite-rank}
Let $\phi_1, \phi_2$ be distinct inner functions on $\D$ with ${\mbox{gcd }(\phi_1, \phi_2) = 1}$. Then the following are equivalent:\\ \vspace{1mm}
$(i)$~the commutator $[P_{\phi_1}, P_{\phi_2}]$ has finite-rank;\\ \vspace{1mm}
$(ii)$~$\phi_1$ or $\phi_2$ is a finite Blaschke product;\\
$(iii)$~$H_{\bar \phi_1}$ or $H_{\bar \phi_2}$ is finite-rank.\\ \vspace{1mm} \vspace{1mm}
Moreover, in this case, 
\[
\text{rank of } [P_{\phi_1}, P_{\phi_2}] =2 \text{min}\{\deg (\phi_1), \deg (\phi_2)\}.
\] 
\end{thm}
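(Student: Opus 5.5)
By the convention (\ref{convention}) we may assume $\ker T_{\bar\phi_2\phi_1}=\{0\}$, so $P_1=P_{\phi_1}$ and $P_2=P_{\phi_2}$. Since $\mbox{gcd}(\phi_1,\phi_2)=1$, (\ref{new L_0}) gives $\cll_0=\phi_1\ker T_{\bar\phi_2}$. Corollary \ref{K_finite}(i) then says that $[P_{\phi_1},P_{\phi_2}]$ has finite rank if and only if $\cll_0$ is finite-dimensional. Because $T_{\phi_1}$ is an isometry, $\dim\cll_0=\dim(H^2(\D)\ominus\phi_2H^2(\D))$. This model space is finite-dimensional exactly when $\phi_2$ is a finite Blaschke product, and then its dimension is $\deg\phi_2$ by \cite[Corollary 5.18]{GMR}. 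So under the convention, (i) holds if and only if $\phi_2$ is a finite Blaschke product.

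For (i)$\Leftrightarrow$(ii), the direction (i)$\Rightarrow$(ii) is immediate from the above. For the converse, suppose $\phi_1$ or $\phi_2$ is a finite Blaschke product; I must show $\phi_2$ is one. If $\phi_2$ is, we are done. Otherwise $\phi_1$ is a finite Blaschke product and $\deg\phi_1<\infty=\deg\phi_2$. Lemma \ref{zeroes}, with the roles of $\phi_1,\phi_2$ swapped, gives $\ker T_{\bar\phi_1\phi_2}=\{0\}$. So the convention could equally have been applied with $P_1=P_{\phi_2}$, and running the same argument shows $[P_{\phi_1},P_{\phi_2}]$ has finite rank. The point to check carefully here is whether the convention is consistently defined when both kernels vanish. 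That is harmless: finite-rankness of the commutator does not depend on the order of the projections, so whichever order has trivial $\cll_{01}$ may be used.

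For (ii)$\Leftrightarrow$(iii), I would invoke Kronecker's theorem: $H_{\bar\phi}$ has finite rank if and only if $\bar\phi$ agrees on $\T$ with a rational function modulo $\overline{H^2}$-type terms. For an inner function this is equivalent to $\phi$ being a finite Blaschke product. A direct route also works: $\ker H_{\bar\phi}=\phi H^2(\D)$, so $\mbox{rank}\,H_{\bar\phi}=\dim(H^2(\D)\ominus\phi H^2(\D))$, which is finite exactly when $\phi$ is a finite Blaschke product.

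For the rank formula, (\ref{commutator}) shows the commutator is unitarily equivalent to $\begin{bmatrix}0&\sqrt{T(I-T)}\\-\sqrt{T(I-T)}&0\end{bmatrix}$ on $\cll_0\oplus\cll_0$. Since $T$ and $I-T$ are injective, $\sqrt{T(I-T)}$ is injective on the finite-dimensional space $\cll_0$, so the rank is $2\dim\cll_0$. Under the convention this equals $2\deg\phi_2$, and it remains to show this is $2\min\{\deg\phi_1,\deg\phi_2\}$. This comparison is the main obstacle.

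If $\phi_1$ is not a finite Blaschke product, the identity is trivial. Suppose both are finite Blaschke products with $\deg\phi_1<\deg\phi_2$. Lemma \ref{zeroes} gives $\ker T_{\bar\phi_1\phi_2}=\{0\}$, and together with $\ker T_{\bar\phi_2\phi_1}=\{0\}$ the symmetric computation gives $\dim\cll_0=\deg\phi_1$ as well. Since $\cll_0$ is determined by the pair of projections, we get $\deg\phi_1=\deg\phi_2$, a contradiction. An alternative is to count dimensions: when $\cll_{01}=\cll_{10}=\cll_{11}=0$ we have $\dim\cll_0=\dim\cll_1$, which forces $\deg\phi_1=\deg\phi_2$ in this case. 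This yields the formula.
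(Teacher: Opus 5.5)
Your proposal is correct and is essentially the paper's proof: the Coburn convention, $\cll_0=\phi_1\ker T_{\bar\phi_2}$ combined with Corollary \ref{K_finite}, Lemma \ref{zeroes} to justify switching the order of the projections, $\ker H_{\bar\theta}=\theta H^2(\D)$ for (ii)$\Leftrightarrow$(iii), and $\mbox{rank}=2\dim\cll_0$ for the formula. Your Kronecker paraphrase has the wrong side ($H_{\bar\phi}$ ignores analytic, not anti-analytic, terms), but the direct route you give suffices. The only real variation is in the rank formula: you fix the convention and then exclude $\deg\phi_1<\deg\phi_2$, whereas the paper assumes $\deg\phi_2\le\deg\phi_1$ and deduces the convention from Lemma \ref{zeroes}. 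Note also that the subspace $\cll_0$ depends on the order of the projections; what is order-independent is $\dim\cll_0$ (half the rank of the commutator), and your $\dim\cll_0=\dim\cll_1$ count makes that step rigorous.
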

\begin{proof}
 From Corollary \ref{K_finite}, it is clear that if $P_1 = P_{\phi_1}$ and $P_2 = P_{\phi_2}$, then $[P_1, P_2]$ is finite-rank if and only if $\cll_0 = \phi_1 \ker T_{\phi_2}^*$ is finite-dimensional, which is again equivlalent to $\ker T_{\phi_2}^*$ being finite-dimensional and therefore, $\phi_2$ must be a finite Blaschke product. Similarly, if $P_1 = P_{\phi_2}$ and $P_2 = P_{\phi_1}$, then $[P_1, P_2]$ is finite-rank if and only if $\phi_1$ is a finite Blaschke product. This proves $(i)$ implies $(ii)$. For proving the other direction, suppose $\phi_1$ is a finite Blaschke product, then either $\mbox{deg }(\phi_1) \leq  \mbox{deg }(\phi_2)$ or $\mbox{deg }(\phi_2) \leq  \mbox{deg }(\phi_1)$. Using Lemma \ref{zeroes}, we can deduce that in the first case $\ker T_{\bar \phi_1 \phi_2}=0$ and in the second case, $\phi_2$ must also be a finite Blaschke product and hence, $\ker T_{\bar \phi_2 \phi_1}=0$. Depending on these cases, based on our convention (\ref{convention}), we can choose $P_1 = P_{\phi_2}$ and $P_2 = P_{\phi_1}$ or $P_1 = P_{\phi_1}$ and $P_2 = P_{\phi_2}$, respectively. Thus, using  (\ref{new L_0}), we get
$
\cll_0=T_{\phi_2}(I_{H^2(\D)}-P_{\phi_1})H^2(\D)
$ or $
\cll_0=T_{\phi_1}(I_{H^2(\D)}-P_{\phi_2})H^2(\D)
$, respectively. In both the cases, we get $\cll_0$ is finite-dimensional, and hence, by Corollary \ref{K_finite} again, $[P_{\phi_1}, P_{\phi_2}]$ must have finite-rank. This proves the equivalence between $(i)$ and $(ii)$. The equivalence between $(ii)$ and $(iii)$ follows from the observation that $\ker H_{\bar \theta} = \theta H^2(\D)$ for any inner function $\theta$ on $\D$.  For proving, the identity on $\mbox{rank }[P_{\phi_1}, P_{\phi_2}]$, let us assume without the loss of any generality that $\phi_2$ is a finite Blaschke product and $\text{deg}(\phi_2)=n\leq \text{deg}(\phi_1)$. Following, Lemma \ref{zeroes}, we get $\ker T_{\bar\phi_2\phi_1} =\{0\}$, and therefore, using identity  (\ref{L_01}), we get $\cll_{01} = \{0\}$. Thus, using  (\ref{new L_0}), we get
\[
\cll_0=T_{\phi_1}(I_{H^2(\D)}-P_{\phi_2})H^2(\D),
\]
and since $\phi_2$ is a Blaschke product with $\mbox{deg }(\phi_2)=n$, therefore dim $\cll_0=n$. Thus, $T=P_{\cll_0}P_{\phi_2}|_{\cll_0}$ is a finite-rank operator. Moreover by Theorem \ref{Halmos_two_subspace}, $\ker (T)=\ker (I-T)=\{0\}$, which implies that $\mbox{rank }T(I - T) = \dim \cll_0 = \dim \ker T_{\phi_2}^* = n$.  Now, from the expression of the commutator (\ref{commutator}) in Corollary \ref{K(I-K)}, we get 
\[
\text{rank of } [P_{\phi_1}, P_{\phi_2}] = 2 \mbox{ rank }\sqrt{T(I-  T)} = 2n.
\]
Similarly, if $\phi_1$ is a finite Blaschke product and $\mbox{deg } \phi_1 = m \leq \mbox{deg } \phi_2 $, we shall get
\[
\text{rank of } [P_{\phi_1}, P_{\phi_2}] =  2m.
\]
Therefore, $\mbox{rank } [P_{\phi_1}, P_{\phi_2}] =  2 \min\{ \mbox{deg } (\phi_1), \mbox{deg } (\phi_2)\}$. 
\end{proof}
The following result is an immediate consequence of the previous theorem and Lemma \ref{gcd}, and addresses the case in which $\phi_1\text{ and }\phi_2$ are not co-prime.
 \begin{cor}\label{without gcd1}
Let $\phi_1, \phi_2$ be distinct inner functions on $\D$ with $\mbox{gcd }(\phi_1, \phi_2) = \phi$. Furthermore, let $\psi_i = \tfrac{\phi_i}{\phi}$, for $i=1,2$. Then the following are equivalent:\\ \vspace{1mm}
$(i)$~the commutator $[P_{\phi_1}, P_{\phi_2}]$ has  finite-rank,\\ \vspace{1mm}
$(ii)$~$\psi_1$ or $\psi_2$ is a finite Blaschke product,\\
$(iii)$~$H_{\bar \psi_1}$ or $H_{\bar \psi_2}$ is finite-rank.\\ \vspace{1mm} \vspace{1mm}
Moreover, in this case, 
\[
\text{rank of } [P_{\phi_1}, P_{\phi_2}] = 2 \text{min}\{\deg (\psi_1), \deg (\psi_2)\}.\]
 \end{cor}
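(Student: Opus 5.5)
The plan is to reduce Corollary \ref{without gcd1} to Theorem \ref{finite-rank} by means of Lemma \ref{gcd}. Write $\phi_i=\phi\psi_i$ for $i=1,2$. I first check that the pair $(\psi_1,\psi_2)$ meets the hypotheses of Theorem \ref{finite-rank}.

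\emph{Coprimality.} If an inner $\theta$ divides both $\psi_1$ and $\psi_2$, then $\phi\theta$ divides both $\phi_1$ and $\phi_2$. Maximality of $\phi=\mbox{gcd}(\phi_1,\phi_2)$ then forces $\phi\theta$ to divide $\phi$. Since $\phi$ is inner and nonzero, $\theta$ must be a unimodular constant, so $\mbox{gcd}(\psi_1,\psi_2)=1$.

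\emph{Distinctness.} If $\psi_1=\psi_2$, then $\phi_1=\phi_2$, which contradicts the hypothesis. So $\psi_1\neq\psi_2$. (Here one must be careful that equality is taken up to a unimodular constant, but the distinctness in the hypothesis is in the same sense.)

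\emph{The equivalences.} Lemma \ref{gcd} gives the identity $[P_{\phi_1},P_{\phi_2}]=T_\phi[P_{\psi_1},P_{\psi_2}]T_\phi^*$. By that lemma, $[P_{\phi_1},P_{\phi_2}]$ has finite rank if and only if $[P_{\psi_1},P_{\psi_2}]$ does, and the two ranks coincide. Applying Theorem \ref{finite-rank} to $(\psi_1,\psi_2)$ then shows that finite rank holds if and only if $\psi_1$ or $\psi_2$ is a finite Blaschke product. The same theorem gives the equivalence with $H_{\bar\psi_1}$ or $H_{\bar\psi_2}$ being of finite rank. This establishes (i)$\Leftrightarrow$(ii)$\Leftrightarrow$(iii).

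\emph{The rank formula.} It follows directly: $\mbox{rank}[P_{\phi_1},P_{\phi_2}]=\mbox{rank}[P_{\psi_1},P_{\psi_2}]=2\min\{\deg\psi_1,\deg\psi_2\}$.

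\emph{Main obstacle.} The only nontrivial point is the rank equality in Lemma \ref{gcd}, which I take as given. If I wanted to re-verify it, I would argue directly. Since $T_\phi$ is an isometry, $T_\phi^*$ maps onto $H^2$, so $\mbox{ran}[P_{\phi_1},P_{\phi_2}]=T_\phi\,\mbox{ran}[P_{\psi_1},P_{\psi_2}]$. Injectivity of $T_\phi$ then preserves dimension, so the ranks agree.
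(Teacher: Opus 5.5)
Your proposal is correct and follows the paper's route: the paper states this corollary as an immediate consequence of Lemma \ref{gcd} (finite rank and equal ranks via $[P_{\phi_1},P_{\phi_2}]=T_\phi[P_{\psi_1},P_{\psi_2}]T_\phi^*$) combined with Theorem \ref{finite-rank} applied to the co-prime pair $(\psi_1,\psi_2)$. Your checks of co-primality and distinctness of $\psi_1,\psi_2$, and your direct proof of the rank equality using surjectivity of $T_\phi^*$ and injectivity of $T_\phi$, supply details the paper leaves implicit.
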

\begin{cor}
Let $\phi_1, \phi_2$ be distinct co-prime inner functions on $\D$ such that $[P_{\phi_1}, P_{\phi_2}]$ is finite-rank, then $$\mbox{rank }[P_{\phi_1}, P_{\phi_2}] = 2 \mbox{ rank } H_{\bar \phi_1} H_{\bar \phi_2}.$$
\end{cor}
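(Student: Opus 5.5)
The statement should be read with the natural interpretation of the product: since $H_{\bar\phi_2}$ maps $H^2(\D)$ into $H^2(\D)^{\perp}$, the meaningful operator on $H^2(\D)$ is $H_{\bar\phi_1}^*H_{\bar\phi_2}$. Its rank equals the rank of the pairing $(f,g)\mapsto \langle H_{\bar\phi_2}f, H_{\bar\phi_1}g\rangle$, and this is the quantity I will compute. By Theorem \ref{finite-rank}, the rank of $[P_{\phi_1},P_{\phi_2}]$ is $2\min\{\deg\phi_1,\deg\phi_2\}$. So it suffices to show that
\[
\operatorname{rank} H_{\bar\phi_1}^*H_{\bar\phi_2} = \min\{\deg\phi_1,\deg\phi_2\}.
\]
The rank is symmetric under adjoints, and the adjoint of $H_{\bar\phi_1}^*H_{\bar\phi_2}$ is $H_{\bar\phi_2}^*H_{\bar\phi_1}$. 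Hence I may assume, as in the proof of Theorem \ref{finite-rank}, that $\phi_2$ is a finite Blaschke product with $n=\deg\phi_2\le\deg\phi_1$.

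\textbf{Step 1: structure of a single Hankel operator.} For an inner $\theta$, the identity $H_f^*H_g=T_{\bar f g}-T_{\bar f}T_g$ gives
\[
H_{\bar\theta}^*H_{\bar\theta}=I-T_\theta T_\theta^*=I-P_\theta .
\]
Therefore $H_{\bar\theta}$ vanishes on $\theta H^2(\D)$ and is an isometry on $K_\theta:=H^2(\D)\ominus\theta H^2(\D)$. Its range is the closed space $H_{\bar\theta}K_\theta=\bar\theta K_\theta$. The standard conjugation $C_\theta f=\theta\bar z\bar f$ maps $K_\theta$ onto itself, so this range equals $\bar z\,\overline{K_\theta}$.

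\textbf{Step 2: reduce to a compression between model spaces.} Since $H_{\bar\phi_2}$ kills $\phi_2H^2(\D)$ and is injective on $K_{\phi_2}$, we have
\[
\operatorname{rank}H_{\bar\phi_1}^*H_{\bar\phi_2}=\dim H_{\bar\phi_1}^*\big(\bar z\,\overline{K_{\phi_2}}\big).
\]
The kernel of $H_{\bar\phi_1}^*$ is $(\operatorname{ran}H_{\bar\phi_1})^\perp$, and $H_{\bar\phi_1}^*$ is isometric on $\operatorname{ran}H_{\bar\phi_1}$. Hence this rank is the rank of the orthogonal projection onto $\bar z\,\overline{K_{\phi_1}}$, restricted to $\bar z\,\overline{K_{\phi_2}}$. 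Multiplication by $z$ is unitary on $L^2(\mathbb{T})$, and complex conjugation is a conjugate-linear isometry, so neither changes ranks or dimensions. The rank therefore equals the rank of $P_{K_{\phi_1}}|_{K_{\phi_2}}$.

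\textbf{Step 3: compute the kernel.} The kernel of $P_{K_{\phi_1}}|_{K_{\phi_2}}$ is $K_{\phi_2}\cap\phi_1H^2(\D)$. Exactly as in the computation of (\ref{L_01}), this equals $\phi_1\ker T_{\bar\phi_2\phi_1}$. By Lemma \ref{zeroes}, this is $\{0\}$ because $\deg\phi_2\le\deg\phi_1$. Thus the rank is $\dim K_{\phi_2}=n=\min\{\deg\phi_1,\deg\phi_2\}$, and combining with Theorem \ref{finite-rank} gives the claim.

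\textbf{Main obstacle.} The delicate step is Step 2: making the interpretation of the product precise and carefully transporting ranks through the conjugate-linear identification $\operatorname{ran}H_{\bar\theta}=\bar z\,\overline{K_\theta}$. Of the facts this uses, the one I would check explicitly is that $\operatorname{ran}H_{\bar\theta}$ is closed. This follows from Step 1, since $H_{\bar\theta}$ is a partial isometry.
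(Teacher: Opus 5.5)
Your proof is correct, but on the Hankel side it takes a different route from the paper. The paper cites Richman's formula $\operatorname{rank} H_\phi H_\psi=\min\{\operatorname{rank}H_\phi,\operatorname{rank}H_\psi\}$. It combines this with $\operatorname{rank}H_{\bar\phi_i}=\deg\phi_i$ (from $\ker H_{\bar\theta}=\theta H^2(\D)$) and compares the result with the rank formula of Theorem \ref{finite-rank}.

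You instead prove the needed min formula directly for inner symbols:
\begin{itemize}
\item $H_{\bar\theta}$ is a partial isometry from $K_\theta$ onto $\bar\theta K_\theta=\bar z\overline{K_\theta}$;
\item the antiunitary $f\mapsto\bar z\bar f$ carries $P_{K_{\phi_1}}$ to the projection onto $\bar z\overline{K_{\phi_1}}$, so the rank reduces to that of $P_{K_{\phi_1}}|_{K_{\phi_2}}$;
\item the kernel $K_{\phi_2}\cap\phi_1H^2(\D)=\phi_1\ker T_{\bar\phi_2\phi_1}$ vanishes by Lemma \ref{zeroes}.
\end{itemize}

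The paper's argument is shorter, but it rests on an external theorem and leaves the meaning of the product implicit. With the paper's formula $H_\phi=P_{H^2(\D)^\perp}L_\phi|_{H^2(\D)}$, the range lies in $H^2(\D)^\perp$, so $H_{\bar\phi_1}H_{\bar\phi_2}$ does not literally compose. Richman's result is really about Hankel operators acting on $H^2$ (Hankel matrices).

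Your version buys three things.
\begin{itemize}
\item It is self-contained.
\item It fixes an interpretation of the product explicitly.
\item It shows that the commutator rank and the Hankel-product rank are governed by the same intersection $\phi_1H^2(\D)\cap(\phi_2H^2(\D))^\perp$ that drives the Halmos decomposition.
\end{itemize}
It is also robust to the choice of convention. Under the Hankel-matrix reading, the product becomes $H^*_{\overline{\phi_1^\#}}H_{\bar\phi_2}$ with $\phi_1^\#(z)=\overline{\phi_1(\bar z)}$, an inner function of the same degree as $\phi_1$. Your Steps 1--3 still yield the minimum of the degrees: Lemma \ref{zeroes} needs only the degree comparison, not co-primality, and the case of equal symbols is trivial.
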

\begin{proof}
Following \cite[Theorem 2]{Richman}, a formula for the rank of the product of Hankel operators $H_{\phi}$ and $H_{\psi}$ corresponding to $\phi, \psi \in L^{\infty}(\T)$ is given by
\[
\mbox{rank } H_{\phi} H_{\psi} = \min \{\mbox{rank } H_{\phi}, \mbox{rank } H_{\psi}\}.
\]
Furthermore, when $\phi_1, \phi_2$ are inner functions, then $\mbox{deg }\phi_i = \mbox{rank }H_{\bar \phi_i}$, for $i=1,2$ and thus,
\[
\mbox{rank } H_{\bar \phi_1} H_{\bar \phi_2} = \min \{\mbox{deg }\phi_1, \mbox{deg }\phi_2\}.
\]
By using Theorem \ref{finite-rank},  we get $$\mbox{rank }[P_{\phi_1}, P_{\phi_2}] = 2 \mbox{ rank } H_{\bar \phi_1} H_{\bar \phi_2},$$
when $[P_{\phi_1}, P_{\phi_2}]$ is finite-rank. This completes the proof.
\end{proof}
The following corollary shows that \cite[Theorem 2.2]{DPS}, which addresses the case of commuting pair of projections $(P_{\phi_1},P_{\phi_2})$, is an immediate consequence of Theorem \ref{finite-rank}.
\begin{cor}
Let $\phi_1, \phi_2$ be distinct inner functions on $\D$ such that $\phi=\text{ gcd }(\phi_1,\phi_2)$. Then $[P_{\phi_1},P_{\phi_2}]=0$ if and only if $\frac{\phi_1}{\phi}$ or $\frac{\phi_2}{\phi}$ is constant.
\end{cor}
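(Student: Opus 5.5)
The plan is to deduce this from Corollary \ref{without gcd1}, specifically its rank formula. The key observation is that $[P_{\phi_1},P_{\phi_2}]=0$ exactly when the commutator is finite-rank with rank $0$.

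For the forward direction, suppose $[P_{\phi_1},P_{\phi_2}]=0$. Then the commutator trivially has finite rank, so Corollary \ref{without gcd1} applies. It tells us that $\psi_1$ or $\psi_2$ is a finite Blaschke product, and that
\[
0=\text{rank }[P_{\phi_1},P_{\phi_2}]=2\min\{\deg\psi_1,\deg\psi_2\}.
\]
Hence one of $\deg \psi_i$ equals $0$. A finite Blaschke product with no zeros in $\D$ is a unimodular constant, so $\psi_i$ is constant. If one wants to avoid relying on the degree convention, the same conclusion can be reached more directly. By Lemma \ref{gcd}, $[P_{\psi_1},P_{\psi_2}]=0$, and $\gcd(\psi_1,\psi_2)=1$. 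Corollary \ref{K_finite}(ii) then gives $\cll_0=\{0\}$, where by (\ref{new L_0}) (after fixing the convention (\ref{convention}), possibly swapping indices) $\cll_0=\psi_1\ker T_{\bar\psi_2}$. Therefore $\ker T_{\psi_2}^*=H^2\ominus\psi_2H^2=\{0\}$, which forces $\psi_2H^2=H^2$, i.e.\ $\psi_2$ is a constant.

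For the converse, suppose $\psi_1=\phi_1/\phi$ is a unimodular constant $c$. Then $\phi_1=c\phi$ divides $\phi_2=\phi\psi_2$, so $\phi_2H^2\subseteq\phi_1H^2$. Two projections with nested ranges commute: $P_{\phi_1}P_{\phi_2}=P_{\phi_2}=P_{\phi_2}P_{\phi_1}$, where the second equality follows by taking adjoints. Alternatively, a constant $\psi_1$ is a finite Blaschke product of degree $0$, and the rank formula in Corollary \ref{without gcd1} gives rank $0$.

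The only delicate point is the bookkeeping of the convention (\ref{convention}) when invoking (\ref{new L_0}), i.e.\ which of $\psi_1,\psi_2$ ends up being constant. This is handled symmetrically because the statement is an ``or''.
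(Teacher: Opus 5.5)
Your proof is correct and is essentially the paper's argument. The paper passes to $\psi_i=\phi_i/\phi$ via $[P_{\phi_1},P_{\phi_2}]=T_{\phi}[P_{\psi_1},P_{\psi_2}]T_{\phi}^*$ and reads off $\min\{\deg\psi_1,\deg\psi_2\}=0$ from the rank formula of Theorem \ref{finite-rank}, which is exactly what your appeal to Corollary \ref{without gcd1} does. Your nested-ranges argument for the converse and your alternative route via $\cll_0=\{0\}$ are also sound; the former makes explicit a direction the paper leaves unstated.
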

\begin{proof}
Let $\psi_1=\frac{\phi_1}{\phi}$ and $\psi_2=\frac{\phi_2}{\phi}$, then from $$[P_{\phi_1},P_{\phi_2}]=T_{\phi}[P_{\psi_1}, P_{\psi_2}]  T_{\phi}^*$$ it follows that $[P_{\phi_1},P_{\phi_2}]=0$ if and only if $[P_{\psi_1},P_{\psi_2}]=0$. Now if $[P_{\psi_1},P_{\psi_2}]=0$, then rank of $[P_{\psi_1},P_{\psi_2}]=0$. Therefore by Theorem \ref{finite-rank}, min $\{\text{deg}(\psi_1),\text{ deg}(\psi_2)\}=0$, and hence one of $\psi_1,\psi_2$ is constant. This completes the proof.
\end{proof}
We conclude this section by applying Theorem \ref{finite-rank} to characterize finite-rank truncated Toeplitz operators with inner symbols. The general problem of classifying finite-rank truncated Toeplitz operators was solved by Bessonov \cite{RV}. Our result can be perceived as a refinement in the case of inner symbols. Let us recall that for any inner function $\theta$ on $\D$, let $\clk_{\theta}:= H^2 \ominus \theta H^2$. Then for any $\phi \in L^2(\T)$, the truncated Toeplitz operator $T_{\phi}^{\theta}$ on $\clk_{\theta}$ is defined by $T_{\phi}^{\theta}f = P_{\clk_{\theta}} \phi f$ for $f \in \clk_{\theta}$.
\begin{thm}\label{tto}
Let $\phi_1,\phi_2$ be distinct inner functions with $\phi = \mbox{gcd}(\phi_1, \phi_2)$. Then the truncated Toeplitz operator $T_{\phi_1}^{\phi_2}$ on $\clk_{\phi_2}$ has finite--rank if and only if $\tfrac{\phi_2}{\phi}$ is a finite Blaschke product.
\end{thm}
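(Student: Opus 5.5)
The plan is to reduce finite-rankness of $T_{\phi_1}^{\phi_2}$ to finite-rankness of the product $(I-P_{\phi_2})P_{\phi_1}$. That product can then be read off from Halmos' decomposition (Theorem \ref{Halmos_two_subspace}) and the results of Section \ref{sec2}.

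\emph{Step 1: reduction to a product of projections.} Put $A:=(I-P_{\phi_2})T_{\phi_1}(I-P_{\phi_2})$ on $H^2(\D)$. Then $A$ is $T_{\phi_1}^{\phi_2}\oplus 0$ with respect to $H^2=\clk_{\phi_2}\oplus\phi_2H^2$, so $\mbox{rank }T^{\phi_2}_{\phi_1}=\mbox{rank }A$. Since $T_{\phi_1}^*$ is a coisometry, it is onto, and hence $\mbox{rank }A=\mbox{rank }AT_{\phi_1}^*$. Next,
\[
T_{\phi_1}(I-P_{\phi_2})T_{\phi_1}^*=P_{\phi_1}-P_{\phi_1\phi_2}.
\]
Also $(I-P_{\phi_2})P_{\phi_1\phi_2}=0$, because $\phi_1\phi_2H^2\subseteq\phi_2H^2$. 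Therefore $AT^*_{\phi_1}=(I-P_{\phi_2})P_{\phi_1}$. Now factor out $\phi=\gcd(\phi_1,\phi_2)$ as in Lemma \ref{gcd}, writing $P_{\phi_i}=T_\phi P_{\psi_i}T_\phi^*$. This gives
\[
(I-P_{\phi_2})P_{\phi_1}=(I-P_\phi)P_{\phi_1}+T_\phi(I-P_{\psi_2})P_{\psi_1}T_\phi^*.
\]
The first term vanishes since $\phi_1H^2\subseteq\phi H^2$. Hence $\mbox{rank }T^{\phi_2}_{\phi_1}=\mbox{rank }(I-P_{\psi_2})P_{\psi_1}$, where $\gcd(\psi_1,\psi_2)=1$.

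\emph{Step 2: sufficiency.} If $\psi_2$ is a finite Blaschke product, the range of $(I-P_{\psi_2})P_{\psi_1}$ lies in $\clk_{\psi_2}$. This space has dimension $\deg\psi_2<\infty$, so the operator has finite rank.

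\emph{Step 3: necessity.} Apply Halmos' decomposition with $P_1=P_{\psi_1}$ and $P_2=P_{\psi_2}$, without imposing the convention (\ref{convention}). A direct block computation gives
\[
(I-P_2)P_1=(0_{\cll_{00}},I_{\cll_{01}},0,0)\oplus W^*\begin{bmatrix} I-T&0\\ -\sqrt{T(I-T)}&0\end{bmatrix}W .
\]
Since $I-T$ is injective, finite rank forces both $\cll_{01}$ and $\cll_0$ to be finite-dimensional.
\begin{itemize}
\item Because $\cll_0$ is finite-dimensional, Corollary \ref{K_finite} and Theorem \ref{finite-rank} show that $\psi_1$ or $\psi_2$ is a finite Blaschke product.
\item Suppose, for contradiction, that $\psi_2$ is not a finite Blaschke product. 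Then $\psi_1$ is one, and $\clk_{\psi_2}$ is infinite-dimensional.
\item By (\ref{L_01}), $\cll_{01}=\psi_1H^2\cap\clk_{\psi_2}$. This is the kernel of the map $P_{\clk_{\psi_1}}|_{\clk_{\psi_2}}$, whose target $\clk_{\psi_1}$ has finite dimension $\deg\psi_1$.
\item Hence $\cll_{01}$ has finite codimension in the infinite-dimensional space $\clk_{\psi_2}$, so it is infinite-dimensional, which is the desired contradiction.
\end{itemize}

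I expect the main obstacle to be Step 1. The key points there are the identity $\mbox{rank }A=\mbox{rank }AT_{\phi_1}^*$, which uses surjectivity of $T_{\phi_1}^*$, and the observation that the $\gcd$-reduction kills the extra term. The degenerate case $\psi_2$ constant should be checked separately: then $\clk_{\psi_2}=\{0\}$ and everything is trivially consistent.
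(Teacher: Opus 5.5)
Your proof is correct and follows essentially the paper's route. You reduce by the gcd to $(I-P_{\psi_2})P_{\psi_1}$, which is the adjoint of the paper's $P_{\psi_1}(I-P_{\psi_2})=T_{\psi_1}(T^{\psi_2}_{\psi_1})^*$; your right multiplication by $T_{\phi_1}^*$ even gives the exact identity $\mbox{rank }T^{\phi_2}_{\phi_1}=\mbox{rank }(I-P_{\psi_2})P_{\psi_1}$. You then use Theorem \ref{finite-rank} to make $\psi_1$ or $\psi_2$ a finite Blaschke product, and exclude the bad case because $\psi_1H^2\cap\clk_{\psi_2}$ is infinite-dimensional.

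Your rank--nullity proof of that last fact is more elementary than the paper's route through Lemma \ref{zeroes}, the convention (\ref{convention}) and Lemma \ref{phi1,phi2 finite}. In fact you can shorten further: since $\cll_{00}=\psi_1\psi_2H^2$, you have $\cll_{01}\oplus\cll_0=\psi_1\clk_{\psi_2}$, so your block form directly gives $\mbox{rank }T^{\phi_2}_{\phi_1}=\dim\cll_{01}+\dim\cll_0=\deg(\tfrac{\phi_2}{\phi})$, making the appeal to Theorem \ref{finite-rank} unnecessary.
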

\begin{proof}
Let $\psi_i=\frac{\phi_i}{\phi}$ for $i=1,2$. Suppose that $\psi_2$ is a finite Blaschke product. Then $T_{\psi_1}^{\psi_2}$ is clearly of finite--rank, since
$(T_{\psi_1}^{\psi_2})^*=T_{\psi_1}^*(I-P_{\psi_2})$ has finite--rank. Consequently, $(T_{\phi_1}^{\phi_2})^*=T_{\psi_1}^*(I-P_{\psi_2})T_{\phi}^*$ implies that $(T_{\phi_1}^{\phi_2})^*$ has finite--rank and thus, $T_{\phi_1}^{\phi_2}$ has finite--rank. Conversely, suppose that $T^{\phi_2}_{\phi_1}$ is of finite--rank. Since $(T_{\psi_1}^{\psi_2})^*=(T^{\phi_2}_{\phi_1})^*T_{\phi}$ it follows that $(T_{\psi_1}^{\psi_2})^*$ is also has finite--rank. Moreover, from the identity 
\begin{align*}
T_{\psi_1}(T_{\psi_1}^{\psi_2})^*-\left(T_{\psi_1}(T_{\psi_1}^{\psi_2})^*\right)^*&=T_{\psi_1}T^*_{\psi_1}-P_{\psi_1}P_{\psi_2}-T_{\psi_1}T^*_{\psi_1}+P_{\psi_2}P_{\psi_1}\\
&=P_{\psi_2}P_{\psi_1}-P_{\psi_1}P_{\psi_2},
\end{align*}
it follows that $[P_{\psi_1},P_{\psi_2}]$ also has finite--rank. Hence by Theorem \ref{finite-rank} at least one of $\psi_1,\psi_2$ is a finite Blaschke product. If possible, let $\psi_1$ be a finite Blaschke product and $\psi_2$ not be a finite Blaschke product. Then by Lemma \ref{zeroes}, $\ker T_{\bar\psi_1\psi_2}=\{0\}$. Therefore using Lemma \ref{phi1,phi2 finite}, we get $\psi_1 H^2(\D)\cap (\psi_2 H^2(\D))^\perp$ is an infinte dimensional space. Now, following our Convention \ref{convention} and Theorem \ref{Halmos_two_subspace}  we have
\begin{equation*}
T_{\psi_1}(T_{\psi_1}^{\psi_2})^*=P_{\psi_1}(I-P_{\psi_2})=(0_{\cll_{00}},0_{\cll_{01}},I_{\cll_{10}},0_{\cll_{11}})\oplus {W}^*\begin{bmatrix}
            0 & {\sqrt{T(I - T)}}\\
             0 & {I -T}
        \end{bmatrix}{W},
\end{equation*}
where $\cll_{10}=\psi_1 H^2(\D)\cap (\psi_2 H^2(\D))^\perp$. This contradicts the assumption that $T_{\psi_1}^{\psi_2}$ has finite--rank. This completes the proof.
\end{proof}

\section{Fredholm pairs}\label{sec3}
In this section, we investigate when does a  pair of projections $(P_{\phi_1}, P_{\phi_2})$ with finite--rank commutator become a Fredholm pair. The following result \cite[Proposition 3.1]{ASS} gives an useful characterization.
\begin{lem}\label{B.Simon}
    $(P,Q)$ is a Fredholm pair if and only if 
    \begin{enumerate}
        \item $1$ and $-1$ are isolated points of $\sigma(P-Q)$,
        \item $\ker (P-Q\pm I)$ are finite dimensional.
    \end{enumerate}
    Moreover, \begin{align*}
        \text{index }(P,Q)=\text{dim}\ker (P-Q-I)-\text{dim}\ker (P-Q+I).
    \end{align*}
\end{lem}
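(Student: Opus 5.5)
We reduce everything to the Halmos decomposition of Theorem \ref{Halmos_two_subspace}, applied with $P_1=P$ and $P_2=Q$, and compute both sides of the equivalence block by block. Write $\clh$ as in (\ref{Halmos_decomp}), with $\clm=\mbox{ran }P$ and $\cln=\mbox{ran }Q$.

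\emph{Step 1: the operator $P-Q$.} On $\cll_{00}$ and $\cll_{11}$ the operator $P-Q$ is $0$, on $\cll_{01}$ it is $I$, and on $\cll_{10}$ it is $-I$. On $\cll_0\oplus\cll_1$ it equals
\[
W^*\begin{bmatrix} I-T & -\sqrt{T(I-T)}\\ -\sqrt{T(I-T)} & -(I-T)\end{bmatrix}W .
\]
Call the middle matrix $A$. A direct computation gives $A^2=(I-T)\oplus(I-T)$, and $A$ is unitarily equivalent to $\sqrt{I-T}\oplus(-\sqrt{I-T})$, since each $2\times2$ fibre of the functional calculus of $T$ has eigenvalues $\pm\sqrt{1-t}$. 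Hence the spectrum of $A$ is $\{\pm\sqrt{1-t}: t\in\sigma(T)\}$.

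Because $\ker T=\{0\}$, the value $\pm 1$ is never an eigenvalue of $A$. Therefore
\[
\ker(P-Q-I)=\cll_{01}, \qquad \ker(P-Q+I)=\cll_{10}.
\]
Moreover, $\pm1$ is an accumulation point of $\sigma(P-Q)$ exactly when $0\in\sigma(T)$. Consequently, conditions (1) and (2) together are equivalent to
\[
\dim\cll_{01}<\infty,\qquad \dim\cll_{10}<\infty,\qquad 0\notin\sigma(T).
\]

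\emph{Step 2: the operator $QP\colon \mbox{ran }P\to\mbox{ran }Q$.} Here $\mbox{ran }P=\cll_{00}\oplus\cll_{01}\oplus\cll_0$ and $\mbox{ran }Q=\cll_{00}\oplus\cll_{10}\oplus\cln_0$, where
\[
\cln_0=W^*\{(\sqrt T y,\sqrt{I-T}\,y): y\in\cll_0\}.
\]
The map $V\colon y\mapsto W^*(\sqrt T y,\sqrt{I-T}\,y)$ is an isometry of $\cll_0$ onto $\cln_0$. The operator $QP$ acts as follows:
\begin{itemize}
\item it is the identity on $\cll_{00}$;
\item it is zero on $\cll_{01}$;
\item on $\cll_0$ it sends $x$ to $W^*(Tx,\sqrt{T(I-T)}x)=V\sqrt T x$.
\end{itemize}
Since $\cll_{10}$ is orthogonal to the range of $QP$, the operator $QP$ is unitarily equivalent to
\[
I_{\cll_{00}}\oplus\big(0\colon\cll_{01}\to\cll_{10}\big)\oplus\sqrt T .
\]
Thus $QP$ is Fredholm iff $\cll_{01}$ and $\cll_{10}$ are finite-dimensional and $\sqrt T$ is Fredholm on $\cll_0$. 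The operator $\sqrt T$ is self-adjoint and injective, so it is Fredholm iff it has closed range, iff it is invertible, iff $0\notin\sigma(T)$.

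This matches Step 1 exactly, which proves the equivalence. It also gives
\[
\mbox{index}(P,Q)=\dim\cll_{01}-\dim\cll_{10}=\dim\ker(P-Q-I)-\dim\ker(P-Q+I).
\]

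\emph{Main obstacle.} The delicate point is Step 2: identifying $\mbox{ran }Q\cap(\cll_0\oplus\cll_1)$ with the isometric image $V(\cll_0)$, and checking that $QP|_{\cll_0}$ really is $V\sqrt T$. This is what converts the Fredholm question into the invertibility of $\sqrt T$. Once this is verified from the explicit matrix of $P_2$ in Theorem \ref{Halmos_two_subspace}, the spectral side of Step 1 is a routine functional-calculus computation.
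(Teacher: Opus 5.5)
Your proof is correct, but it follows a different route from the paper. The paper gives no argument for this lemma and simply imports it from \cite[Proposition 3.1]{ASS}; you derive it from the Halmos canonical form of Theorem \ref{Halmos_two_subspace}.

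The steps check out:
\begin{itemize}
\item $A^2=(I-T)\oplus(I-T)$ together with $\ker T=\{0\}$ gives $\ker(P-Q-I)=\cll_{01}$ and $\ker(P-Q+I)=\cll_{10}$.
\item Since $0$ is not an eigenvalue of the self-adjoint operator $T$, it cannot be an isolated point of $\sigma(T)$ when it lies there. You should say this explicitly, because it is what makes $\pm1$ accumulate in $\sigma(P-Q)$ exactly when $0\in\sigma(T)$.
\item Writing the $\cll_0\oplus\cll_1$ block of $Q$ as $JJ^*$, where $Jy=(\sqrt T y,\sqrt{I-T}\,y)$ is an isometry, justifies $QP\cong I_{\cll_{00}}\oplus(0\colon\cll_{01}\to\cll_{10})\oplus\sqrt T$.
\end{itemize}

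What your route buys is a dictionary in the paper's own Halmos data. Condition (2) is exactly $\dim\cll_{01},\dim\cll_{10}<\infty$, and condition (1) is exactly invertibility of $T$. This explains why, in Theorem \ref{mainfredhlom}, a finite-rank commutator reduces the Fredholm question to finiteness of $\cll_{01}$ and $\cll_{10}$: there $\dim\cll_0<\infty$, so the injective $T$ is automatically invertible.

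The price is reliance on the full canonical form, including the unitary $U$ and a $2\times2$ functional calculus. A more algebraic route avoids this. For $A=P-Q$ one has $PA^2=A^2P=P-PQP$ and $QA^2=A^2Q=Q-QPQ$. Hence $(QP)^*(QP)$ and $(QP)(QP)^*$ are $I-A^2$ restricted to $\mathrm{ran}\,P$ and $\mathrm{ran}\,Q$, and Fredholmness becomes a statement about $\sigma(A^2)$ near $1$.

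Finally, state explicitly the reading of condition (1) that your Step 1 uses. ``Isolated'' must mean ``not an accumulation point of $\sigma(P-Q)$'', which allows $\pm1\notin\sigma(P-Q)$. Taken literally the lemma is false: for $P=Q$, $(P,Q)$ is a Fredholm pair, yet $\sigma(P-Q)=\{0\}$.
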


\begin{prop}[\cite{ASS}]\label{Fredholm}
A necessary and sufficient condition that $(P, Q)$ be a Fredholm pair is that $P - Q = F+D$, where $F, D$ are self-adjoint, $\|D\| < 1$ and $F$ is finite-rank.
\end{prop}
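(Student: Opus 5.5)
We need to prove: $(P,Q)$ is a Fredholm pair if and only if $P-Q=F+D$ with $F,D$ self-adjoint, $\|D\|<1$ and $F$ finite-rank. The plan is to reduce both directions to the spectral criterion of Lemma \ref{B.Simon}. Throughout, $A:=P-Q$ is a self-adjoint contraction with $\sigma(A)\subseteq[-1,1]$.

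For necessity, assume $(P,Q)$ is Fredholm. By Lemma \ref{B.Simon}, $\pm1$ are isolated points of $\sigma(A)$ (or do not lie in it at all), and $\ker(A\mp I)$ are finite-dimensional. Pick $\epsilon>0$ with $\sigma(A)\cap\big((-1,-1+\epsilon]\cup[1-\epsilon,1)\big)=\emptyset$, and let $E_\pm$ be the spectral projections of $A$ onto $\ker(A\mp I)$. These are finite-rank and commute with $A$. Set
\[
F:=A(E_++E_-)=E_+-E_-, \qquad D:=A\big(I-E_+-E_-\big).
\]
Then $F$ is self-adjoint and finite-rank, $D$ is self-adjoint, and $\sigma(D)\subseteq[-1+\epsilon,1-\epsilon]\cup\{0\}$. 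Hence $\|D\|\le 1-\epsilon<1$ and $A=F+D$.

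For sufficiency, suppose $A=F+D$ as in the statement. The first step is to show that $\pm1$ are not in the essential spectrum of $A$. Since $F$ is compact, Weyl's theorem gives $\sigma_{ess}(A)=\sigma_{ess}(D)\subseteq[-\|D\|,\|D\|]$, which excludes $\pm1$. For a self-adjoint operator, a point of the spectrum outside the essential spectrum is an isolated eigenvalue of finite multiplicity, and a point outside the spectrum trivially satisfies both conditions. So conditions (1) and (2) of Lemma \ref{B.Simon} hold, and $(P,Q)$ is a Fredholm pair.

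If one prefers to avoid Weyl's theorem, the finite-dimensionality of $\ker(A-I)$ can be checked directly. If $Ax=x$ and $x\in\ker F$, then $Dx=x$, which forces $x=0$ because $\|D\|<1$. Hence $\ker(A-I)\cap\ker F=\{0\}$, and since $\ker F$ has finite codimension, $\dim\ker(A-I)\le\operatorname{rank}F$. The same argument bounds $\ker(A+I)$.

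The main obstacle is the isolatedness of $\pm1$, and the cleanest route is the essential-spectrum argument above. For a hands-on version, compress to the invariant subspace $(\operatorname{ran}F)^\perp$; this is not exactly invariant, so I would instead use the min-max principle. The quadratic form of $A$ exceeds $\|D\|$ only on a subspace of dimension at most $\operatorname{rank}F$. Therefore the spectrum of $A$ in $(\|D\|,1]$ consists of at most $\operatorname{rank}F$ eigenvalues counted with multiplicity, and similarly near $-1$. This gives both isolatedness and finite multiplicity.
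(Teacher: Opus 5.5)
Your argument is correct. The paper gives no proof of its own here and simply quotes the result from \cite{ASS}, and your route is the standard derivation from the spectral criterion of Lemma \ref{B.Simon}: for necessity you split off the finite-rank part $E_{\{1\}}(A)-E_{\{-1\}}(A)$, and for sufficiency you use Weyl's theorem, $\sigma_{ess}(F+D)=\sigma_{ess}(D)\subseteq[-\|D\|,\|D\|]$. The one thing to tidy is the min-max remark. It should say that any subspace on which $\langle Ax,x\rangle>\|D\|\,\|x\|^2$ for all $x\neq 0$ meets $\ker F$ trivially, hence has dimension at most $\operatorname{rank}F$, and then apply this to $\operatorname{ran}E_{(\|D\|,1]}(A)$; the aside about compressing to $(\operatorname{ran}F)^{\perp}$ can simply be dropped.
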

Let us now note that the Fredholmness is invariant under the division by common factors.
\begin{lem}\label{Fredholm_pair}
Let $\phi_1,\phi_2$ be distinct   inner functions on $\D$. Moreover, let $\phi=\mbox{gcd }(\phi_1,\phi_2)$ and $\psi_i = \frac{\phi_i}{\phi}$, for $i=1,2$. Then $(P_{\phi_1}, P_{\phi_2})$ is  a Fredholm pair if and only if $(P_{\psi_1}, P_{\psi_2})$ is a Fredholm pair.
\end{lem}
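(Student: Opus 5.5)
The plan is to reduce everything to the unitary equivalence already used in Lemma \ref{gcd}. Write $\phi_i=\phi\psi_i$. Since $T_\phi$ is an isometry, $P_{\phi_i}=T_\phi P_{\psi_i}T_\phi^*$ for $i=1,2$. So $P_{\phi_1}-P_{\phi_2}=T_\phi(P_{\psi_1}-P_{\psi_2})T_\phi^*$.

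Decompose $H^2(\D)=\phi H^2(\D)\oplus \clk_\phi$, where $\clk_\phi=H^2(\D)\ominus\phi H^2(\D)$. The map $T_\phi:H^2(\D)\to \phi H^2(\D)$ is unitary. With respect to this decomposition,
\[
P_{\phi_1}-P_{\phi_2}=\big(T_\phi(P_{\psi_1}-P_{\psi_2})T_\phi^*\big)\big|_{\phi H^2(\D)}\oplus 0_{\clk_\phi}.
\]
The first summand is unitarily equivalent to $P_{\psi_1}-P_{\psi_2}$ on $H^2(\D)$.

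Next apply Lemma \ref{B.Simon}. The spectrum satisfies $\sigma(P_{\phi_1}-P_{\phi_2})\subseteq\sigma(P_{\psi_1}-P_{\psi_2})\cup\{0\}$, with equality away from $0$. Because $\pm1\neq 0$, the point $\pm1$ is isolated in one spectrum if and only if it is isolated in the other. The zero summand contributes nothing to $\ker(P-Q\pm I)$. Hence $T_\phi$ maps $\ker(P_{\psi_1}-P_{\psi_2}\pm I)$ bijectively onto $\ker(P_{\phi_1}-P_{\phi_2}\pm I)$, so the two kernels have the same dimension. Both conditions of Lemma \ref{B.Simon} therefore transfer in each direction, and the indices coincide as a bonus.

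As an alternative route, Proposition \ref{Fredholm} also works. Suppose $P_{\psi_1}-P_{\psi_2}=F+D$ with $F$ finite-rank self-adjoint and $\|D\|<1$. Then conjugating by $T_\phi$ gives the decomposition $T_\phi FT_\phi^*+T_\phi DT_\phi^*$, which has the required form. For the converse, compress by $T_\phi^*(\cdot)T_\phi$. This uses $T_\phi^*T_\phi=I$ and preserves self-adjointness, finite rank and the norm bound.

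The only subtle point is that the extra zero block on $\clk_\phi$ could be infinite-dimensional. Its eigenvalue is $0$, not $\pm1$, so it does not affect either condition. I expect no real obstacle beyond stating this carefully.
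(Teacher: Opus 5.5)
Your proposal is correct, and your main route differs from the paper's. Your ``alternative route'' is exactly the paper's proof. It writes $P_{\phi_1}-P_{\phi_2}=T_\phi(P_{\psi_1}-P_{\psi_2})T_\phi^*$. It then transports the decomposition $F+D$ of Proposition \ref{Fredholm} forward by $T_\phi(\cdot)T_\phi^*$ and backward by $T_\phi^*(\cdot)T_\phi$.

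Your main route instead uses the spectral characterization of Lemma \ref{B.Simon}. You observe that $T_\phi$ is unitary from $H^2(\D)$ onto $\phi H^2(\D)$ and that $\clk_\phi=\ker T_\phi^*$. Hence $P_{\phi_1}-P_{\phi_2}$ is unitarily equivalent to $(P_{\psi_1}-P_{\psi_2})\oplus 0_{\clk_\phi}$.

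This costs a little spectral bookkeeping but gives more. The two spectra agree off $0$, so the isolation (or absence) of $\pm1$ transfers however one reads the first condition in Lemma \ref{B.Simon}. Moreover, $T_\phi$ maps $\ker(P_{\psi_1}-P_{\psi_2}\pm I)$ onto $\ker(P_{\phi_1}-P_{\phi_2}\pm I)$. So you get $\mathrm{index}(P_{\phi_1},P_{\phi_2})=\mathrm{index}(P_{\psi_1},P_{\psi_2})$ at once.

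The paper's route yields only the Fredholm equivalence. It has to argue the index equality separately in Corollary \ref{lastfredhlom}, via the remark that the extra $\cll_{11}$ block contributes nothing to the kernels of $P-Q\pm I$. That remark is precisely your observation that the zero block on $\clk_\phi$ has eigenvalue $0\neq\pm1$. In exchange, the paper's argument is shorter. It needs only that conjugation by an isometry, or compression by it, preserves self-adjointness, finite rank and the norm bound $\|D\|<1$.
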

\begin{proof}
Let us first note that
\[
P_{\phi_1} - P_{\phi_2} = T_{\phi_1}T_{\phi_1}^* - T_{\phi_2}T_{\phi_2}^* = T_{\phi}T_{\psi_1} T_{\psi_1}^* T_{\phi}^* - T_{\phi}T_{\psi_2}T_{\psi_2}^*T_{\phi}^* = T_{\phi} \big( P_{\psi_1} - P_{\psi_2}  \big) T_{\phi}^*.
\]
If $(P_{\psi_1}, P_{\psi_2})$ is a Fredholm pair then by Proposition \ref{Fredholm}, there exists self-adjoint operators $F, D$ satisfying
\[
P_{\psi_1} - P_{\psi_2} = F + D.
\]
Thus,
\[
P_{\phi_1} - P_{\phi_2} =  T_{\phi} FT_{\phi}^* + T_{\phi}DT_{\phi}^*.
\]
Now $F,D$ being self-adjoint imply that $T_{\phi} FT_{\phi}^*$ and $T_{\phi}DT_{\phi}^*$ are self-adjoint operators.  Moreover, $T_{\phi} FT_{\phi}^*$ is finite-rank since $F$ is finite-rank and $\|T_{\phi}DT_{\phi}^* \| < 1$ since $\|D\| < 1$. Thus, again by Proposition \ref{Fredholm}, $(P_{\phi_1}, P_{\phi_2})$ must be a Fredholm pair. Conversely, if $(P_{\phi_1}, P_{\phi_2})$ is a Fredholm pair, then there exists self-adjoint operators $\tilde{F}, \tilde{D}$, where $\tilde{F}$ is finite-rank, $\|\tilde{D}\| <1$ and
\[
P_{\phi_1} - P_{\phi_2} = \tilde{F} + \tilde{D},
\]
and therefore,
\[
P_{\psi_1} - P_{\psi_2} = T_{\phi}^* \big( P_{\phi_1} - P_{\phi_2} \big)T_{\phi} = T_{\phi}^* \tilde{F}T_{\phi}+ T_{\phi}^* \tilde{D}T_{\phi}.
\]
Thus, again by Proposition \ref{Fredholm}, $(P_{\psi_1}, P_{\psi_2})$ becomes a Fredholm pair. This completes the proof.
\end{proof}


\begin{lem}\label{deg}
     Let $\phi_1,\phi_2$ be distinct and finite Blaschke products. Then, $\mbox{dim }(\ker T_{\bar\phi_1\phi_2})=\text{max}~\{0,\text{deg}(\phi_1)-\text{deg}(\phi_2)\}$.
     \end{lem}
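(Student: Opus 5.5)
The plan is to identify $\ker T_{\bar\phi_1\phi_2}$ with an explicit subspace of a model space and then count dimensions by comparing zeros. Write $n=\deg\phi_1$ and $m=\deg\phi_2$. As in the computation leading to (\ref{L_01}), a function $h\in H^2(\D)$ lies in $\ker T_{\bar\phi_1\phi_2}$ if and only if $\phi_2 h\in \ker T_{\phi_1}^* = \clk_{\phi_1}$. Multiplication by the inner function $\phi_2$ is an isometry, so
\[
\dim\ker T_{\bar\phi_1\phi_2}=\dim\big(\phi_2H^2(\D)\cap \clk_{\phi_1}\big).
\]
The whole task is therefore to compute this dimension.

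If $n\le m$, Lemma \ref{zeroes} (applied with the roles of $\phi_1,\phi_2$ interchanged) already gives $\ker T_{\bar\phi_1\phi_2}=\{0\}$. This matches $\max\{0,n-m\}=0$.

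Now assume $n>m$, and let $\alpha_1,\dots,\alpha_n$ be the zeros of $\phi_1$. By \cite[Corollary 5.18]{GMR}, $\clk_{\phi_1}$ consists exactly of the functions $p(z)/\prod_{j=1}^n(1-\bar\alpha_j z)$ with $\deg p\le n-1$. Such a function lies in $\phi_2H^2(\D)$ precisely when it vanishes at the zeros of $\phi_2$, counted with multiplicity. Since the denominator does not vanish on $\D$, this is the same as requiring $p$ to vanish at those $m$ points. Because $m<n$, this happens exactly when $p=B_2\,q$, where $B_2(z)=\prod_{k=1}^m (z-\beta_k)$ is the polynomial with the same zeros as $\phi_2$ and $\deg q\le n-m-1$.

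The map $q\mapsto B_2q/\prod_j(1-\bar\alpha_jz)$ is injective, so the intersection has dimension $n-m$. Alternatively, one can note that $\phi_2H^2\cap\clk_{\phi_1}=\phi_2(\cdots)$ and is cut out of the $n$-dimensional space $\clk_{\phi_1}$ by $m$ independent linear conditions.

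I expect the main care to be in the zero-matching step, for two reasons. First, multiplicities must be handled correctly, which is why I use the polynomial $B_2$ rather than a list of point evaluations. Second, a common zero of $\phi_1$ and $\phi_2$ causes no problem, because the denominator only involves $1-\bar\alpha_jz$, which never vanishes on $\D$. The divisibility argument is purely about zeros of holomorphic functions in $\D$, so it needs no coprimality hypothesis. Combining the two cases gives $\dim\ker T_{\bar\phi_1\phi_2}=\max\{0,\deg\phi_1-\deg\phi_2\}$.
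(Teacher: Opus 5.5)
Your proposal is correct and follows essentially the same route as the paper: identify $\ker T_{\bar\phi_1\phi_2}$ with $\phi_2H^2(\D)\cap\clk_{\phi_1}$, use the explicit description of $\clk_{\phi_1}$ from \cite[Corollary 5.18]{GMR}, and require the numerator polynomial to be divisible by $\prod_k(z-\beta_k)$. The differences are cosmetic: you handle the case $\deg\phi_1\le\deg\phi_2$ by quoting Lemma \ref{zeroes} instead of a direct degree count, and you correctly observe that the paper's preliminary reduction to $\mbox{gcd}(\phi_1,\phi_2)=1$ is not needed.
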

     \begin{proof}
Let $\phi=\mbox{gcd}(\phi_1,\phi_2)$, and set $\psi_i=\tfrac{\phi_i}{\phi}$ for $i=1,2$, then obviously $\ker T_{\bar\psi_1\psi_2}=\ker T_{\bar\phi_1\phi_2}$. Hence, without loss of generality, we may assume that $\mbox{gcd}(\phi_1,\phi_2)=1$.
Suppose $\alpha_1,\alpha_2,\ldots,\alpha_m$ and $\beta_1,\beta_2,\ldots,\beta_n$ are the zeros of $\phi_1$ and $\phi_2$, respectively, counted according to their multiplicities. Then it is well known that dim $(H^2\ominus\phi_1H^2)=m$. Moreover, from (\ref{L_01}), we already know that
\[
\text{dim }(\ker T_{\bar\phi_1\phi_2})=
              \text{dim }((\phi_1 H^2(\D))^\perp \cap \phi_2H^2(\D)).
\]
         Now following \cite[ Corollary 5.18]{GMR}, we have 
             \begin{equation}\label{finite_Blaschke}
                 H^2(\mathbb{D})\ominus \phi_1 H^2(\mathbb{D})=\left\{\frac{a_0+a_1z+a_2z^2+\cdots+a_{m-1}z^{m-1}}{(1-\bar\alpha_1z)(1-\bar\alpha_2z)\cdots(1-\bar\alpha_mz)}:a_j\in\mathbb{C}\right\}.
             \end{equation}
            Suppose $f\in \phi_1 H^2(\D)^\perp\cap \phi_2H^2(\D)$. Then $f(z)=\frac{p(z)}{(1-\bar\alpha_1z)(1-\bar\alpha_2z)\cdots(1-\bar\alpha_mz)}$, where $\mbox{deg}~(p(z))\leq m- 1$. Moreover $f\in \phi_2H^2(\D)$, implies that \begin{align}\label{deg of c}
                p(z)=\prod_{i=1}^{n}(z-\beta_i)c(z),
            \end{align} 
            for some polynomial $c$. Thus if $n>m$ then there does not exist any $c$ satisfying \ref{deg of c} and if $n\leq m$ then deg $c(z)\leq m-n-1$.
            Now for $n\leq m$

            \begin{align*}
                \{\frac{\left(\prod_{i=1}^{n}(z-\beta_i)\right)z^k}{(1-\bar\alpha_1z)(1-\bar\alpha_2z)\cdots(1-\bar\alpha_mz)}:k=0,1,\ldots m-n-1\}\subset (\phi_1 H^2(\D))^\perp\cap(\phi_2 H^2(\D)),
            \end{align*}
since these functions belong in $H^2(\D) \ominus \phi_1 H^2(\D)$ by the above description in (\ref{finite_Blaschke}) and also,
\[
\frac{\left(\prod_{i=1}^{n}(z-\beta_i)\right)z^k}{(1-\bar\alpha_1z)(1-\bar\alpha_2z)\cdots(1-\bar\alpha_mz)} = \prod_{i=1}^n \frac{z-\beta_i}{1 - \bar \beta_i z} \, \boldsymbol{\cdot} \, \frac{\prod_{i=1}^{n}{1 - \bar \beta_i z}}{(1-\bar\alpha_1z)(1-\bar\alpha_2z)\cdots(1-\bar\alpha_mz)}.
\]
Moreover for any $k \in \mathbb{N}$,
 \begin{align*}
                &\frac{\left(\prod_{i=1}^{n}(z-\beta_i)\right)z^{k}}{(1-\bar\alpha_1z)(1-\bar\alpha_2z)\cdots(1-\bar\alpha_mz)}\notin \\
                &\text{span}\{\frac{\left(\prod_{i=1}^{n}(z-\beta_i)\right)z^j}{(1-\bar\alpha_1z)(1-\bar\alpha_2z)\cdots(1-\bar\alpha_mz)}:j=0,1,\ldots,k-1\}.
            \end{align*}
            
       This implies that $\{\frac{\left(\prod_{i=1}^{n}(z-\beta_i)\right)z^k}{(1-\bar\alpha_1z)(1-\bar\alpha_2z)\cdots(1-\bar\alpha_mz)}:k=0,1,\ldots m-n-1\}$    forms a basis for   $(\phi_1 H^2)^\perp\cap(\phi_2H^2)$.
            Hence dim $(\phi_1 H^2)^\perp\cap(\phi_2H^2)=\text{max }\{0,m-n\}$. 
     
     \end{proof}
    
We will now prove that pairs of the projections  $(P_{\phi_1}, P_{\phi_2})$ with finite--rank commutator is a Fredholm if and only if both $\phi_1$ and $\phi_2$ are finite Blaschke products.  

\begin{thm}\label{mainfredhlom}
Let $\phi_1,\phi_2$ be  co-prime inner functions on $\D$ such that $[P_{\phi_1},P_{\phi_2}]$ is a finite--rank operator. Then $(P_{\phi_1},P_{\phi_2})$ is a Fredhlom pair if and only if both $\phi_1$ and $\phi_2$ are finite Blaschke products.  Moreover, 
\begin{align*}
        \text{index }(P_{\phi_1},P_{\phi_2})=\text{deg}(\phi_2)-\text{deg}(\phi_1).
    \end{align*}
    \end{thm}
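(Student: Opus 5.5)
The strategy is to read off $P_{\phi_1}-P_{\phi_2}$ from the Halmos decomposition (Theorem \ref{Halmos_two_subspace}) and apply the criterion of Lemma \ref{B.Simon}. By Theorem \ref{finite-rank}, the finite-rank hypothesis means one of $\phi_1,\phi_2$ is a finite Blaschke product. By Corollary \ref{K_finite}, $\cll_0$ (hence $\cll_1$) is then finite dimensional. On $\cll_{00}\oplus\cll_{01}\oplus\cll_{10}\oplus\cll_{11}$ the difference is
\[
P_{\phi_1}-P_{\phi_2}=(0_{\cll_{00}},\,I_{\cll_{01}},\,-I_{\cll_{10}},\,0_{\cll_{11}})\oplus W^*\begin{bmatrix} I-T & -\sqrt{T(I-T)}\\ -\sqrt{T(I-T)} & T-I\end{bmatrix}W .
\]
The $2\times 2$ block squares to $(I-T)\oplus(I-T)$, so its eigenvalues are $\pm\sqrt{1-t}$ for $t\in\sigma(T)\subset(0,1)$. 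Since $\dim\cll_0<\infty$, these finitely many values are strictly inside $(-1,1)$.

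It follows that $\pm1$ are automatically isolated points of $\sigma(P_{\phi_1}-P_{\phi_2})$ (or lie outside it), and that
\[
\ker(P_{\phi_1}-P_{\phi_2}-I)=\cll_{01},\qquad \ker(P_{\phi_1}-P_{\phi_2}+I)=\cll_{10}.
\]
So by Lemma \ref{B.Simon}, the pair is Fredholm if and only if both $\cll_{01}=\phi_1\ker T_{\bar\phi_2\phi_1}$ and $\cll_{10}=\phi_2H^2\cap(\phi_1H^2)^\perp$ are finite dimensional. In that case the index is $\dim\cll_{01}-\dim\cll_{10}$. This bookkeeping is convention-free: here I take $P=P_{\phi_1}$ and $Q=P_{\phi_2}$ in the given order, and use the identification $\cll_{10}\cong \phi_2\ker T_{\bar\phi_1\phi_2}$, which is (\ref{L_01}) with the roles of $\phi_1,\phi_2$ swapped.

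\emph{Sufficiency.} If both $\phi_i$ are finite Blaschke products, Lemma \ref{deg} gives
\[
\dim\cll_{01}=\max\{0,\deg\phi_2-\deg\phi_1\},\qquad \dim\cll_{10}=\max\{0,\deg\phi_1-\deg\phi_2\}.
\]
Both are finite, and their difference is $\deg\phi_2-\deg\phi_1$, which is the stated index.

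\emph{Necessity.} Suppose only one of them, say $\phi_2$, is a finite Blaschke product and $\phi_1$ is not. Then Lemma \ref{zeroes} gives $\ker T_{\bar\phi_2\phi_1}=0$, and the coprimality hypothesis is in force. Lemma \ref{phi1,phi2 finite} then shows $\cll_{10}=\phi_2H^2\cap(H^2\ominus\phi_1H^2)$ is infinite dimensional, so $\ker(P_{\phi_1}-P_{\phi_2}+I)$ is infinite dimensional and the pair is not Fredholm. The case with the roles exchanged is symmetric: it makes $\cll_{01}$ infinite dimensional instead, via Lemma \ref{phi1,phi2 finite} applied with the indices swapped.

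The main obstacle is keeping the orientation straight. The paper's convention (\ref{convention}) may swap which projection is called $P_1$, while the index is sensitive to order. I will therefore avoid the convention entirely and compute $\ker(P_{\phi_1}-P_{\phi_2}\mp I)$ directly as $\phi_1H^2\cap(\phi_2H^2)^\perp$ and $(\phi_1H^2)^\perp\cap\phi_2H^2$. These are the eigenspaces of a difference of projections for eigenvalues $\pm1$ in any case, which keeps the sign of the index correct.
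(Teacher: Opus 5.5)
Your proof is correct and follows the same skeleton as the paper: Halmos' decomposition reduces Fredholmness to finite-dimensionality of $\cll_{01}$ and $\cll_{10}$, Lemmas \ref{zeroes} and \ref{phi1,phi2 finite} give necessity, and Lemma \ref{deg} gives the index. The one variation is in how you get the Fredholm criterion. You derive it from Lemma \ref{B.Simon}: the $2\times 2$ block of $P_{\phi_1}-P_{\phi_2}$ squares to $(I-T)\oplus(I-T)$, so its spectrum lies in $(-1,1)$ and $\ker(P_{\phi_1}-P_{\phi_2}\mp I)$ are exactly $\cll_{01}$ and $\cll_{10}$. The paper instead argues from the definition, via closed range of $P_{\phi_2}P_{\phi_1}$, and cancels the block contributions to the index with the map $(x,y)\mapsto(-y,x)$. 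Your convention-free handling of the order of the pair is, if anything, slightly cleaner.
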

    \begin{proof}
Assume $\ker T_{\bar\phi_2\phi_1}=\{0\}$. By definition $(P_{\phi_1},P_{\phi_2})$ is a Fredhlom pair if $P_{\phi_2}P_{\phi_1}$ is a Fredhlom operator from range of $P_{\phi_1}$ to range of $P_{\phi_2}$. Now from the representation of $P_{\phi_1},P_{\phi_2}$ in Theorem \ref{Halmos_two_subspace} we have 
\begin{align}\label{QP}
{P_{\phi_2}P_{\phi_1}}&=(I_{\cll_{00}},0_{\cll_{01}},0_{\cll_{10}},0_{\cll_{11}})\oplus {W}^*\begin{bmatrix}
           T & 0 \\
            \sqrt{T(I-T)} & 0
        \end{bmatrix}{W}. 
\end{align}  
\begin{align}\label{QP2}
{P_{\phi_1}P_{\phi_2}}&=(I_{\cll_{00}},0_{\cll_{01}},0_{\cll_{10}},0_{\cll_{11}})\oplus {W}^*\begin{bmatrix}
           T & \sqrt{T(I-T)} \\
            0 & 0
        \end{bmatrix}{W}. 
\end{align}  
Here $\cll_{11}=\{0\}$ since gcd$(\phi_1,\phi_2)=1$.  Moreover $[P_{\phi_1},P_{\phi_2}]$ has finite rank implies that $T$ is a finite rank operator, therefore from (\ref{QP}) it is clear that range of $P_{\phi_2}P_{\phi_1}|_{P_{\phi_1}H^2}$ is closed. Thus, $(P_{\phi_1},P_{\phi_2})$ is a Fredhlom pair if and only if $\ker (P_{\phi_2}P_{\phi_1}|_{P_{\phi_1}H^2})$  and $\ker (P_{\phi_1}P_{\phi_2}|_{P_{\phi_2}H^2})$ both are finite dimensional. Therefore, from (\ref{QP}) and (\ref{QP2}), it follows that $(P_{\phi_1},P_{\phi_2})$ is a Fredhlom pair if and only if $\cll_{01}$ and $\cll_{10}$ both are finite dimensional. Finally by  Lemma \ref{phi1,phi2 finite}, we conclude that $\cll_{01}$ and $\cll_{10}$ both are finite dimensional if and only if both $\phi_1,\phi_2$ are finite Blaschke product.

Let us now compute the index. First, let us note that
$$\begin{bmatrix}
            x\\
            y
        \end{bmatrix}\in \ker(\begin{bmatrix}
            {-T} & {-\sqrt{T(I-T)}}\\
            {-\sqrt{T(I-T)}} & {T-2I}
        \end{bmatrix})$$ if and only if $$\begin{bmatrix}
            -y\\
            x
        \end{bmatrix}\in 
        \ker(\begin{bmatrix}
            {2-T} & {-\sqrt{T(I-T)}}\\
            {-\sqrt{T(I-T)}} & {T}
        \end{bmatrix}).$$ Thus, we have
        \begin{align*}
            \text{dim }&(\ker(\begin{bmatrix}
            {-T} & {-\sqrt{T(I-T)}}\\
            {-\sqrt{T(I-T)}} & {T-2I}
        \end{bmatrix}))\\
        &~=\text{dim }(\ker(\begin{bmatrix}
            {2-T} & {-\sqrt{T(I-T)}}\\
            {-\sqrt{T(I-T)}} & {T}
        \end{bmatrix})).
        \end{align*}
        For proving the identity on the index of the pair $(P_{\phi_1}, P_{\phi_2})$, using Theorem \ref{Halmos_two_subspace}, let us note that
        \begin{equation}\label{eqns1}
     \begin{split}
            {P_{\phi_1}-P_{\phi_2}-I}&=(-I_{\cll_{00}},0,-2I_{\cll_{10}},-I_{\cll_{11}})\\
            &~\oplus {W}^*\begin{bmatrix}
            {-T} & {-\sqrt{T(I-T)}}\\
            {-\sqrt{T(I-T)}} & {T-2I}
        \end{bmatrix}{W}.
        \end{split}
        \end{equation}
        
        \begin{equation}\label{eqns2}
        \begin{split}
        {P_{\phi_1}-P_{\phi_2}+I}&=(I_{\cll_{00}},2I_{\cll_{01}},0,I_{\cll_{11}})\\&~\oplus {W}^*\begin{bmatrix}
            {2-T} & {-\sqrt{T(I-T)}}\\
            {-\sqrt{T(I-T)}} & {T}
        \end{bmatrix}{W}.
        \end{split}
        \end{equation}
        Thus, $\cll_{10} \subseteq \ker ( P_{\phi_1} - P_{\phi_2} + I)$ and $\cll_{01} \subseteq \ker ( P_{\phi_1} - P_{\phi_2} - I)$, and hence,
\begin{align*}
            \text{dim }&\ker({P_{\phi_1}-P_{\phi_2}-I})-\text{ dim }\ker({P_{\phi_1}-P_{\phi_2}+I})\\
            =&\text{ dim }(\cll_{01})-\text{ dim }(\cll_{10})\\
             =&\text{ dim }(\phi_1 \ker T_{\bar \phi_2 \phi_1} )- \text{ dim }(\phi_2 \ker T_{\bar \phi_1 \phi_2})\\
            =&\text{ max}\{0,\text{deg}(\phi_2)-\text{deg}(\phi_1)\}-\text{max}\{0,\text{deg}(\phi_1)-\text{deg}(\phi_2)\}\\
            =&~\text{deg}(\phi_2)-\text{deg}(\phi_1),
\end{align*}
where the third equality follows from Lemma \ref{deg}. Thus the final result follows from Lemma \ref{B.Simon}.  This completes the proof.
\end{proof}
\begin{cor}\label{lastfredhlom}
Let $\phi_1,\phi_2$ be inner functions on $\D$ such that $[P_{\phi_1},P_{\phi_2}]$ is a finite--rank operator. Then $(P_{\phi_1},P_{\phi_2})$ is a Fredhlom pair if and only if both $\frac{\phi_1}{\phi}$ and $\frac{\phi_2}{\phi}$ are finite Blaschke products, where $\phi=\mbox{gcd}(\phi_1,\phi_2)$.  Moreover 
\begin{align*}
        \text{index }(P_{\phi_1},P_{\phi_2})=\mbox{deg}(\tfrac{\phi_2}{\phi})-\text{deg}(\tfrac{\phi_1}{\phi}).
    \end{align*}
\end{cor}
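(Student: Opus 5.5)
The plan is to reduce to the co-prime case and then apply Theorem \ref{mainfredhlom}. Set $\phi=\mbox{gcd}(\phi_1,\phi_2)$ and $\psi_i=\tfrac{\phi_i}{\phi}$ for $i=1,2$. Then $\mbox{gcd}(\psi_1,\psi_2)=1$, since any common inner divisor $\theta$ of $\psi_1,\psi_2$ would make $\phi\theta$ a common divisor of $\phi_1,\phi_2$, forcing $\theta$ to be constant.

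By Lemma \ref{gcd}, $[P_{\psi_1},P_{\psi_2}]$ has finite rank because $[P_{\phi_1},P_{\phi_2}]$ does. By Lemma \ref{Fredholm_pair}, $(P_{\phi_1},P_{\phi_2})$ is a Fredholm pair if and only if $(P_{\psi_1},P_{\psi_2})$ is. The degenerate case $\psi_1=\psi_2$ forces both to be unimodular constants, so $\phi_1$ and $\phi_2$ differ by a unimodular constant. Then $P_{\phi_1}=P_{\phi_2}$, the pair is trivially Fredholm with index $0=\deg\psi_2-\deg\psi_1$, and both $\psi_i$ are finite Blaschke products of degree $0$. Otherwise $\psi_1\neq\psi_2$, and Theorem \ref{mainfredhlom} applied to $(\psi_1,\psi_2)$ gives the first assertion.

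For the index, I use the identity from the proof of Lemma \ref{Fredholm_pair}:
\[
P_{\phi_1}-P_{\phi_2}=T_\phi\,(P_{\psi_1}-P_{\psi_2})\,T_\phi^*.
\]
Since $T_\phi$ is an isometry, write $H^2(\D)=\phi H^2(\D)\oplus \clk_\phi$. With respect to this decomposition, $P_{\phi_1}-P_{\phi_2}$ is unitarily equivalent to $(P_{\psi_1}-P_{\psi_2})\oplus 0_{\clk_\phi}$, via the unitary $T_\phi: H^2(\D)\to\phi H^2(\D)$.

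It follows that
\[
\ker(P_{\phi_1}-P_{\phi_2}\mp I)=T_\phi\ker(P_{\psi_1}-P_{\psi_2}\mp I).
\]
This holds because on $\clk_\phi$ the operator $0\mp I$ is invertible, and $\pm1$ do not change status as isolated points of the spectrum. By Lemma \ref{B.Simon}, the two pairs therefore have the same index, and the index formula follows from Theorem \ref{mainfredhlom}.

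The only subtle step is this transfer of the index, namely checking that the direct summand $0_{\clk_\phi}$ adds nothing to $\ker(P-Q\pm I)$. This is immediate since $\pm1\neq 0$.
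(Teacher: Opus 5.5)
Your proposal is correct and follows essentially the paper's route: you reduce to $\psi_i=\phi_i/\phi$ via Lemma~\ref{gcd} and Lemma~\ref{Fredholm_pair}, invoke Theorem~\ref{mainfredhlom}, and transfer the index by noting that the extra summand $\clk_\phi=\cll_{11}$ contributes nothing to $\ker(P_{\phi_1}-P_{\phi_2}\mp I)$. Your explicit unitary equivalence $P_{\phi_1}-P_{\phi_2}\cong(P_{\psi_1}-P_{\psi_2})\oplus 0_{\clk_\phi}$, together with your separate treatment of the degenerate case $\psi_1=\psi_2$, makes precise what the paper asserts more briefly by pointing to the Halmos block forms (\ref{eqns1})--(\ref{eqns2}).
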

\begin{proof}
From equations (\ref{eqns1}) and (\ref{eqns2}), it is clear that even if we do not assume $\mbox{gcd }(\phi_1, \phi_2)=1$, the subspace $\cll_{11}$ does not contribute anything to $\ker ( P_{\phi_1} - P_{\phi_2} + I)$ or $\ker ( P_{\phi_1} - P_{\phi_2} - I)$. Therefore, 
\[
\mbox{index }(P_{\phi_1}, P_{\phi_2}) = \mbox{index }(P_{\frac{\phi_1}{\phi}}, P_{\frac{\phi_2}{\phi}}),
\]
where $\phi = \mbox{gcd }(\phi_1, \phi_2)$. The proof now follows Theorem \ref{mainfredhlom} and Lemma \ref{Fredholm_pair}.
\end{proof}
 
\section{Co-prime inner functions on {$\D^n$}}\label{sec5}
It is well-known that inner functions on $\D^n$ do not admit a well-behaved gcd/lcm theory. Our main goal in this section is to study certain properties of co-primeness that may still be valid on $\D^n$. Let us first recall that $H^2(\D^n)$ is the space of all analytic functions $f(\bm{z})$ on the unit polydisc $\D^n$ such that 
\[
\|f\|: = \sup_{0 \leq r<1} \big( \int_{\mathbb{T}^n} |f(r \bm{z})|^2 d\mu \big)^{\frac{1}{2}} < \infty,
\]
where $\bm{z}:= (z_1,\ldots,z_n) \in \D^n$, and $\mu$ is the normalized Lebesgue measure on $\mathbb{T}^n$, the distinguished boundary of $\D^n$. The algebra of bounded analytic functions on $\D^n$ is denoted by $H^{\infty}(\D^n)$. A function $\phi \in H^{\infty}(\D^n)$ is inner if $|\phi(\bm{\mu})| = 1 $ for almost everywhere $\bm{\mu} \in \mathbb{T}^n$. In the case of $H^2(\D)$, it is well-known that for inner functions, $\phi_1,\phi_2$ on $\D$ the following are equivalent \cite{GMR}.\\ \vspace{1mm}
$(i)$~$\mbox{gcd }(\phi_1,\phi_2)=1$; \\ \vspace{1mm}
$(ii)$~$\phi_1H^2\cap \phi_2 H^2=\phi_1\phi_2 H^2$;\\ \vspace{1mm}
$(iii)$~$\phi_1H^2\bigvee \phi_2 H^2=H^2$; { (``$\bigvee$" denotes the span closure)}\\ \vspace{1mm}
$(iv)$~$\phi_1(H^2\ominus\phi_2H^2)\cap\phi_2H^2=\{0\}$.

There are several examples (like the following) which show that these conditions do not remain equivalent on $H^2(\D^n)$ when $n>1$.
\begin{ex}
Consider two inner functions $\phi_1=z_1,~\phi_2=z_2\in H^2(\mathbb{D}^2).$ Since every function  $f\in \phi_1H^2\bigvee \phi_2H^2$ vanishes at $(0,0)$, we have $f(0,0)=0$. Therefore, $\phi_1H^2\bigvee \phi_2H^2\neq H^2$. However, $z_1(H^2\ominus z_2H^2)\cap z_2H^2=\{0\}$.
\end{ex}
We will show that $(ii)$ and $(iv)$ remains equivalent even on $\D^n$. 

\begin{prop}\label{lcm}
Let $\phi_1, \phi_2$ be distinct inner functions on $\D^n$. Then 
\begin{equation*} 
(\phi_1 H^2(\D^n) \cap \phi_2 H^2(\D^n)) \ominus \phi_1 \phi_2 H^2(\D^n) 
=  \phi_{1} \ker T_{\phi_2}^* \cap \phi_2 \ker T_{\phi_1}^* 
\end{equation*}
\end{prop}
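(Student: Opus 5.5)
We prove the identity by double inclusion, using only that $T_{\phi_1}, T_{\phi_2}$ are commuting isometries on $H^2(\D^n)$. We also use that $\ker T_{\phi_i}^* = H^2(\D^n)\ominus \phi_i H^2(\D^n)$, since $P_{\phi_i}=T_{\phi_i}T_{\phi_i}^*$. Write $\clk := (\phi_1 H^2 \cap \phi_2 H^2)\ominus \phi_1\phi_2 H^2$. Note first that $\phi_1\phi_2 H^2 \subseteq \phi_1 H^2\cap\phi_2 H^2$, so $\clk$ is a genuine orthogonal complement inside the intersection.

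\emph{Inclusion $\clk \subseteq \phi_1\ker T_{\phi_2}^*\cap \phi_2\ker T_{\phi_1}^*$.} Take $f\in\clk$. Since $f\in\phi_1H^2$, we can write $f=\phi_1 g$ with $g\in H^2(\D^n)$. For any $h\in H^2(\D^n)$, orthogonality to $\phi_1\phi_2 H^2$ gives
\[
0=\langle \phi_1 g,\phi_1\phi_2 h\rangle=\langle T_{\phi_1}g,T_{\phi_1}T_{\phi_2}h\rangle=\langle g,\phi_2 h\rangle ,
\]
because $T_{\phi_1}$ is an isometry. Hence $g\perp\phi_2H^2$, so $g\in\ker T_{\phi_2}^*$ and $f\in\phi_1\ker T_{\phi_2}^*$. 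The same argument with the roles of $\phi_1$ and $\phi_2$ swapped (writing $f=\phi_2 k$) gives $f\in\phi_2\ker T_{\phi_1}^*$.

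\emph{Reverse inclusion.} Take $f$ in the right-hand side. Then $f=\phi_1 g$ with $g\perp\phi_2H^2$, and in particular $f\in\phi_1H^2$. Likewise $f=\phi_2 k$ with $k \perp \phi_1 H^2$, so $f\in\phi_2H^2$. Thus $f$ lies in the intersection. Reading the displayed computation backwards, $g\perp\phi_2H^2$ gives $\langle f,\phi_1\phi_2h\rangle=\langle g,\phi_2 h\rangle=0$ for all $h$, so $f\perp\phi_1\phi_2H^2$. Therefore $f\in\clk$.

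There is no genuine obstacle. The only point to check is that the isometry identity $T_{\phi}^*T_{\phi}=I$ and the commutation $T_{\phi_1}T_{\phi_2}=T_{\phi_1\phi_2}$ hold on $\D^n$ for inner functions $\phi$. This is immediate from $|\phi|=1$ a.e.\ on $\T^n$ and the fact that these are multiplication operators. As a consequence, $\phi_1H^2\cap\phi_2H^2=\phi_1\phi_2H^2$ if and only if $\phi_1\ker T_{\phi_2}^*\cap\phi_2\ker T_{\phi_1}^*=\{0\}$. Moreover, $\phi_2\ker T_{\phi_1}^* = \phi_2 H^2 \cap (\phi_1 H^2)^{\perp}$-type arguments show that this intersection equals $\phi_1\ker T_{\phi_2}^*\cap\phi_2H^2$, which yields the equivalence of $(ii)$ and $(iv)$ announced above.
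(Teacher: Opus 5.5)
Your proof is correct and rests on the same fact as the paper's: the isometry $T_{\phi_1}$ carries $\ker T_{\phi_2}^*$ onto $\phi_1H^2(\D^n)\ominus\phi_1\phi_2H^2(\D^n)$, and symmetrically for $T_{\phi_2}$; you use this directly through $\langle\phi_1 g,\phi_1\phi_2h\rangle=\langle g,\phi_2h\rangle$, where the paper instead splits $I-T_{\phi_1}T_{\phi_2}T_{\phi_1}^*T_{\phi_2}^*$ and identifies ranges of partial isometries, and you also write out the reverse inclusion that the paper only calls clear. Your closing aside needs a fix: $\phi_2\ker T_{\phi_1}^*=\phi_2H^2\cap(\phi_1H^2)^{\perp}$ is false in general (for $\phi_1=z_1$, $\phi_2=z_1^2$ the right side is $\{0\}$ while $z_1^2$ lies in the left side), but the equality $\phi_1\ker T_{\phi_2}^*\cap\phi_2\ker T_{\phi_1}^*=\phi_1\ker T_{\phi_2}^*\cap\phi_2H^2$ that you want still holds, because your own computation shows both sides equal $\phi_1H^2\cap\phi_2H^2\cap(\phi_1\phi_2H^2)^{\perp}$.
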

\begin{proof}
Let us begin by observing that
\[
(\phi_1 H^2(\D^n) \cap \phi_2 H^2(\D^n)) \ominus \phi_1 \phi_2 H^2(\D^n) = \phi_1 H^2(\D^n) \cap \phi_2 H^2(\D^n) \cap \ker T_{\phi_1}^* T_{\phi_2}^*.
\]
Furthermore, if $f \in \phi_1 H^2(\mathbb{D} )\cap\phi_2 H^2(\mathbb{D} ) \cap \ker T_{\phi_1}^* T_{\phi_2}^* $, then $
(I_{H^2(\D^n)} - T_{\phi_1} T_{\phi_2} T_{\phi_1}^* T_{\phi_2}^*) f = f$.
But the following expressions,
\[
(I_{H^2(\D^n)} - T_{\phi_1} T_{\phi_2} T_{\phi_1}^* T_{\phi_2}^*) = I_{H^2(\D^n)} - T_{\phi_1}T_{\phi_1}^*+ T_{\phi_1}(I_{H^2(\D^n)}  - T_{\phi_2}T_{\phi_2}^*)T_{\phi_1}^*.
 \]
 and 
 \[
(I_{H^2(\D^n)} - T_{\phi_1} T_{\phi_2} T_{\phi_1}^* T_{\phi_2}^*) = I_{H^2(\D^n)} - T_{\phi_2}T_{\phi_2}^*+ T_{\phi_2}(I_{H^2(\D^n)}  - T_{\phi_1}T_{\phi_1}^*)T_{\phi_2}^*.
 \]
imply that
\[
T_{\phi_1}(I_{H^2(\D^n)}  - T_{\phi_2}T_{\phi_2}^*)T_{\phi_1}^* f = f = T_{\phi_2}(I_{H^2(\D^n)}  - T_{\phi_1}T_{\phi_1}^*)T_{\phi_2}^* f.
\]
Thus,
\[
\phi_1 H^2(\mathbb{D} )\cap\phi_2 H^2(\mathbb{D} ) \cap \ker T_{\phi_1}^* T_{\phi_2}^* \subseteq \mbox{ran } T_{\phi_1} (I_{H^2(\D^n)} - T_{\phi_2}T_{\phi_2}^*)  \cap \mbox{ran } T_{\phi_2}(I_{H^2(\D^n)} - T_{\phi_1}T_{\phi_1}^*).
\]
Both $T_{\phi_1} (I_{H^2(\D^n)} - T_{\phi_2}T_{\phi_2}^*)$ and $T_{\phi_2} (I_{H^2(\D^n)} - T_{\phi_1}T_{\phi_1}^*)$  are partial isometries and moreover,
$
\mbox{ran } T_{\phi_1} (I_{H^2(\D^n)} - T_{\phi_2}T_{\phi_2}^*) = \mbox{ran } T_{\phi_1} (I_{H^2(\D^n)} - T_{\phi_2}T_{\phi_2}^*) T_{\phi_1}^* = \phi_1 \ker T_{\phi_2}^*$ and $ \mbox{ran } T_{\phi_2} (I_{H^2(\D^n)} - T_{\phi_1}T_{\phi_1}^*) = \mbox{ran } T_{\phi_2} (I_{H^2(\D^n)} - T_{\phi_1}T_{\phi_1}^*) T_{\phi_2}^* = \phi_2 \ker T_{\phi_1}^*$. Conversely, if $f \in \phi_1 \ker T_{\phi_2}^* \cap \phi_2 \ker T_{\phi_1}^*$, then it is clear that $f \in  \phi_1 H^2(\D^n) \cap \phi_2 H^2(\D^n) \cap \ker T_{\phi_1}^* T_{\phi_2}^*.$ This completes the proof.
\end{proof}

\begin{thm}
    Let $\phi_1,\phi_2$ be distinct inner functions on $\mathbb{D}^n$, then the following are equivalent.\\ \vspace{2mm}
        $(i)$~$\phi_1 H^2(\mathbb{D}^n)\cap \phi_2 H^2(\mathbb{D}^n)=\phi_1\phi_2H^2(\mathbb{D}^n)$,\\ \vspace{1mm}
          $(ii)$~$\phi_1 \ker T_{\phi_2}^* \cap \phi_2 \ker T_{\phi_1}^*=\{0\},$\\ \vspace{1mm}
          $(iii)$~$ \phi_1 \ker T_{\phi_2}^* \cap \phi_2 H^2(\mathbb{D}^n)=\{0\}.$
        \end{thm}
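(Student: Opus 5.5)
The plan is to prove $(i)\Leftrightarrow(ii)$ directly from Proposition \ref{lcm}, and then close the cycle with $(iii)\Rightarrow(ii)$ and $(i)\Rightarrow(iii)$.

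\emph{Step 1: $(i)\Leftrightarrow(ii)$.} The product $\phi_1\phi_2$ is inner, so $\phi_1\phi_2H^2(\D^n)$ is a closed subspace. It is contained in $\phi_1H^2(\D^n)\cap\phi_2H^2(\D^n)$. Hence $(i)$ holds exactly when the orthogonal difference $(\phi_1H^2(\D^n)\cap\phi_2H^2(\D^n))\ominus\phi_1\phi_2H^2(\D^n)$ is trivial. By Proposition \ref{lcm} this difference equals $\phi_1\ker T_{\phi_2}^*\cap\phi_2\ker T_{\phi_1}^*$, which gives the equivalence.

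\emph{Step 2: $(iii)\Rightarrow(ii)$.} Since $\ker T_{\phi_1}^*\subseteq H^2(\D^n)$, we have $\phi_2\ker T_{\phi_1}^*\subseteq\phi_2H^2(\D^n)$. Therefore the intersection in $(ii)$ is contained in the intersection in $(iii)$, which is $\{0\}$.

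\emph{Step 3: $(i)\Rightarrow(iii)$.} Take $f\in\phi_1\ker T_{\phi_2}^*\cap\phi_2H^2(\D^n)$ and write $f=\phi_1g$ with $g\perp\phi_2H^2(\D^n)$. Then $f\in\phi_1H^2(\D^n)\cap\phi_2H^2(\D^n)$, so by $(i)$ we get $f=\phi_1\phi_2h$ for some $h\in H^2(\D^n)$. Because $T_{\phi_1}$ is an isometry, it is injective, so $\phi_1g=\phi_1\phi_2h$ gives $g=\phi_2h\in\phi_2H^2(\D^n)$. Since also $g\perp\phi_2H^2(\D^n)$, we conclude $g=0$ and hence $f=0$.

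The only point that needs care is the cancellation of $\phi_1$ in Step 3. It is justified by the injectivity of multiplication by an inner function on $H^2(\D^n)$, which follows from $T_{\phi_1}^*T_{\phi_1}=I$. The rest reduces to Proposition \ref{lcm}, so I expect no real obstacle.
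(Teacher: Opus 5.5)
Your proof is correct. For $(i)\Leftrightarrow(ii)$ and $(i)\Rightarrow(iii)$ it coincides with the paper's proof. This includes the cancellation $\phi_1 g=\phi_1\phi_2 h\Rightarrow g=\phi_2 h$, which you justify explicitly via $T_{\phi_1}^*T_{\phi_1}=I$ while the paper uses it tacitly.

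Where you differ is in closing the cycle. The paper proves $(iii)\Rightarrow(i)$ directly:
\begin{itemize}
\item it takes $f=\phi_1h_1\in\phi_1H^2(\D^n)\cap\phi_2H^2(\D^n)$;
\item it splits $h_1=\phi_2g+k$ along $H^2(\D^n)=\phi_2H^2(\D^n)\oplus\ker T_{\phi_2}^*$;
\item it observes that $\phi_1k=f-\phi_1\phi_2g$ lies in $\phi_1\ker T_{\phi_2}^*\cap\phi_2H^2(\D^n)$, hence vanishes.
\end{itemize}
You instead use the trivial inclusion $\phi_2\ker T_{\phi_1}^*\subseteq\phi_2H^2(\D^n)$ to get $(iii)\Rightarrow(ii)$ for free, and then invoke Proposition \ref{lcm} for $(ii)\Rightarrow(i)$.

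Your route is shorter and makes the logical structure transparent. Condition $(iii)$ is formally the stronger of the two intersection conditions, and the only non-formal inputs are Proposition \ref{lcm} and the single cancellation argument. The paper's direct argument costs a second decomposition argument, but it makes $(i)\Leftrightarrow(iii)$ self-contained, independent of Proposition \ref{lcm}.
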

    \begin{proof}
From Proposition \ref{lcm}, it is clear that $(i) \Longleftrightarrow (ii)$. For proving, the equivalence between $(i)$ and $(iii)$, let us first assume $\phi_1 H^2(\mathbb{D}^n)\cap \phi_2 H^2(\mathbb{D}^n)=\phi_1\phi_2H^2(\mathbb{D}^n)$ and consider 
$
   f \in \phi_1 \ker T_{\phi_2}^* \cap \phi_2 H^2(\mathbb{D}^n).
$
 Then $f=\phi_1 h_1$ for some $h_1\in H^2(\mathbb{D}^n)\ominus \phi_2H^2(\mathbb{D}^n)$, also the assumption further implies that $f\in \phi_1 H^2(\mathbb{D}^n)\cap \phi_2 H^2(\mathbb{D}^n)=\phi_1\phi_2H^2(\mathbb{D}^n).$ Therefore $f=\phi_1\phi_2g$, for some $g\in H^2(\mathbb{D}^n). $ Hence, we have $\phi_1h_1=\phi_1\phi_2g$, which implies $h_1=\phi_2g$. But this is a contradiction to the fact that $h_1\in H^2(\mathbb{D}^n)\ominus \phi_2H^2(\mathbb{D}^n)$, unless $h_1=0$, which would further imply $f=0$. Thus, $$\phi_1(H^2(\mathbb{D}^n)\ominus \phi_2H^2(\mathbb{D}^n))\cap \phi_2H^2(\mathbb{D}^n)=\{0\}.$$

For the other direction, we just need to show that $\phi_1 \ker T_{\phi_2}^* \cap \phi_2 H^2(\mathbb{D}^n)=\{0\}$ implies $ \phi_1 H^2(\mathbb{D}^n)\cap\phi_2 H^2(\mathbb{D}^n) \subseteq \phi_1\phi_2 H^2(\mathbb{D}^n)$ since the other inclusion always holds. Suppose $f\in \phi_1H^2(\D^n)\cap\phi_2H^2(\mathbb{D}^n)$, then $$f=\phi_1h_1\text{ for some }h_1\in H^2(\mathbb{D}^n).$$ Moreover,  $h_1=\phi_2g+k,$ for some $g \in H^2(\D^n)$ and $k\in (\phi_2H^2)^\perp$ using the orthogonal decomposition $H^2(\D^n) = \phi_2 H^2(\D^n) \oplus \ker T_{\phi_2}^*$. This further implies that $$f=\phi_1h_1=\phi_1\phi_2g+\phi_1k.$$ Hence, we get $$\phi_1k=f-\phi_1\phi_2g,$$ which implies $\phi_1k\in \phi_2H^2(\D^n).$ But $k\in (\phi_2H^2)^\perp$, implies that $$\phi_1k\in \phi_1(H^2(\mathbb{D}^n)\ominus \phi_2H^2(\mathbb{D}^n))\cap \phi_2H^2(\mathbb{D}^n)=\{0\}.$$ Thus, $\phi_1k=0$, which further implies $k=0$ and hence, $f=\phi_1h_1=\phi_1\phi_2g$.  This completes the proof.
    \end{proof}

Motivated by the above characterization, we frame the following definition.
\begin{defn}\label{co-pr}
For $n>1$, a pair of inner functions $\phi_1, \phi_2$ on $\D^n$ is said to be \textit{co-prime} if $\phi_1 H^2(\D^n) \cap \phi_2 H^2(\D^n) =\phi_1 \phi_2 H^2(\D^n)$.
\end{defn}
Let us now identify few cases, where a pair of inner functions becomes co-prime.
\begin{thm}\label{prod_inner}
Any pair of inner functions $\phi_1,\phi_2$ on $\mathbb{D}^n$ satisfying $\ker T_{\phi_1}^* \cap \ker T_{\phi_2}^* = \{0\}$ is co-prime.
    \end{thm}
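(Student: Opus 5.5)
By Proposition \ref{lcm}, the orthogonal complement of $\phi_1\phi_2 H^2(\D^n)$ inside $\phi_1 H^2(\D^n)\cap\phi_2H^2(\D^n)$ equals $\phi_1\ker T_{\phi_2}^*\cap \phi_2\ker T_{\phi_1}^*$. So it suffices to show that this intersection is $\{0\}$, which is exactly condition $(ii)$ of the preceding theorem. I will therefore fix
\[
f=\phi_1 h_1=\phi_2h_2, \qquad h_1\in\ker T_{\phi_2}^*,\ h_2\in\ker T_{\phi_1}^*,
\]
and aim to build from $f$ a vector of $\ker T_{\phi_1}^*\cap\ker T_{\phi_2}^*$ that is nonzero whenever $f\neq 0$. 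The hypothesis then forces $f=0$.

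The basic algebraic input is that $T_{\phi_1}^*$ and $T_{\phi_2}^*$ commute, because $T_{\phi_1}T_{\phi_2}=T_{\phi_1\phi_2}=T_{\phi_2}T_{\phi_1}$. Also $f\perp\phi_1\phi_2H^2(\D^n)$ gives $T_{\phi_1}^*T_{\phi_2}^*f=0$. Setting $v_k:=T_{\phi_1}^{*k}f$ for $k\ge 1$, commutativity gives $T_{\phi_2}^*v_k=T_{\phi_1}^{*(k-1)}T_{\phi_1}^*T_{\phi_2}^*f=0$. Hence every $v_k$ lies in $\ker T_{\phi_2}^*$, and $v_1=h_1$. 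Consequently $\ker T_{\phi_2}^*$ is invariant under $T_{\phi_1}^*$. The hypothesis says exactly that the restriction $R:=T_{\phi_1}^*|_{\ker T_{\phi_2}^*}$ is injective.

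Next I will use that $T_{\phi_1}$ is a pure isometry when $\phi_1$ is nonconstant, since $\bigcap_k\phi_1^kH^2(\D^n)=\{0\}$. (If $\phi_1$ or $\phi_2$ is constant the statement is trivial.) Via the Wold decomposition I can write $f=\sum_{k\ge1}\phi_1^k w_k$ with $w_k\in\ker T_{\phi_1}^*$, so that $v_j=\sum_{k\ge j}\phi_1^{k-j}w_k$. The plan is to exploit the second representation $f=\phi_2h_2$ with $h_2\in \ker T_{\phi_1}^*$. Concretely, I will compare $T_{\phi_2}^*$ applied to the Wold components $w_k$ and argue that the lowest-index nonzero component $w_{k_0}$, or a suitable combination of the $v_j$, lies in both kernels.

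The main obstacle is exactly this last step: turning the injectivity of $R$ together with purity of $T_{\phi_1}$ into a contradiction. Strong convergence $v_k\to0$ alone does not force some $v_k$ to be zero. What I would try first is to show that $R$ is unitarily equivalent to a co-isometric piece of a pure isometry. Such a piece has kernel equal to the wandering subspace, so injectivity would make that piece trivial and give $\ker T_{\phi_2}^*\cap \overline{\mathrm{span}}\{v_k\}=\{0\}$. In particular this would give $h_1=0$, hence $f=0$. If that fails, I would fall back on a direct check of whether $T_{\phi_1}^* h_1\in\ker T_{\phi_1}^*$, using the boundary identity $h_1=\bar\phi_1\phi_2h_2$ on $\T^n$.
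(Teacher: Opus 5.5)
Your reduction is correct. By Proposition \ref{lcm} it suffices to show $\phi_1\ker T_{\phi_2}^*\cap\phi_2\ker T_{\phi_1}^*=\{0\}$, and the facts you derive are all true: commutation, invariance of $\ker T_{\phi_2}^*$ under $T_{\phi_1}^*$, injectivity of $R$, purity of $T_{\phi_1}$, and the Wold expansion. However, the proposal never proves $h_1=0$. The step you flag as the main obstacle is the whole content of the theorem, and none of your suggested tactics closes it. If $R$ itself were co-isometric, injectivity would force $\ker T_{\phi_2}^*=\{0\}$, i.e.\ $\phi_2$ constant. Restricting the idea to $\overline{\mathrm{span}}\{v_k\}$ presupposes the conclusion. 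The fallback $T_{\phi_1}^*h_1\in\ker T_{\phi_1}^*$ is, by your own injectivity observation, equivalent to $h_1=0$. Nothing places the lowest Wold component $w_{k_0}$ in $\ker T_{\phi_2}^*$.

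More fundamentally, everything you use holds for an arbitrary pair of commuting pure isometries, and for such pairs the implication is false. On $H^2_{\mathcal E}(\D)$ with $\mathcal E$ infinite-dimensional, let $S$ be a unilateral shift on $\mathcal E$ with $S^*e_0=0$, and set $C=\tfrac12 S^*$. Let $\Theta(z)=C+z(I-CC^*)^{1/2}(I+zC^*)^{-1}(I-C^*C)^{1/2}$; this is the transfer function of the Halmos unitary dilation of $C$, and it is inner because $\|C\|<1$. Take $V_1=M_z$ and $V_2=M_\Theta$; both are pure, and they commute. Then $\ker V_1^*\cap\ker V_2^*=\ker C^*=\{0\}$. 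Yet $(\Theta e_0)(0)=Ce_0=0$, so $\Theta e_0\in zH^2_{\mathcal E}(\D)\cap\Theta H^2_{\mathcal E}(\D)$, while $\Theta e_0\notin z\Theta H^2_{\mathcal E}(\D)$. So some genuinely scalar, function-theoretic input is unavoidable.

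The missing idea is multiplicative, and this is how the paper argues. The hypothesis means $\phi_1H^2(\D^n)+\phi_2H^2(\D^n)$ is dense, so there are $p_k,q_k\in H^2(\D^n)$ with $\phi_1p_k+\phi_2q_k\to 1$ in $H^2(\D^n)$. Take any $f=\phi_1h_1=\phi_2h_2$; no orthogonality is needed. By Cauchy--Schwarz,
\[
f\phi_1p_k+f\phi_2q_k=\phi_1\phi_2\,(h_2p_k+h_1q_k)\to f \quad\text{in } H^1(\D^n).
\]
Since $\phi_1\phi_2H^1(\D^n)$ is closed in $H^1(\D^n)$, we get $f=\phi_1\phi_2g$ with $g\in H^1(\D^n)$. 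Then $|g|=|f|$ a.e.\ on $\T^n$ forces $g\in H^2(\D^n)$. This uses that products of scalar $H^2$ functions lie in $H^1$ and commute, which is exactly what has no counterpart for abstract commuting isometries.
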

    \begin{proof}
The inclusion $\phi_1\phi_2H^2(\D^n) \subseteq \phi_1H^2(\D^n) \cap\phi_2 H^2(\D^n)$ is always satisfied. For the other direction, suppose $f\in \phi_1H^2(\D^n) \cap\phi_2H^2(\D^n)$. Now, $\ker T_{\phi_1}^* \cap \ker T_{\phi_2}^* = \{0\}$ implies that $\phi_1 H^2(\D^n) \bigvee \phi_2 H^2(\D^n)=H^2(\D^n)$ . Therefore, there exist $p_k,q_k\in H^2(\D^n)$ such that $$\phi_1p_k+\phi_2q_k\rightarrow 1,\text{in }H^2(\D^n).$$ Using Cauchy-Schwarz we get $$f\phi_1p_k+f\phi_2q_k\rightarrow f\text{ in }H^1(\D^n).$$ Since $f\in \phi_1H^2(\D^n) \cap \phi_2H^2(\D^n)$, then $f=\phi_1h_1=\phi_2h_2$, for some $h_1,h_2\in H^2(\D^n)$. Using this we get $$f\phi_1p_k+f\phi_2q_k=\phi_1\phi_2(h_2p_k+h_1q_k)\rightarrow f.$$ Moreover, $\phi_1\phi_2H^1(\D^n)$ is a closed subspace in $H^1(\D^n)$, thus $f\in \phi_1\phi_2H^1(\D^n)$. Let $f=\phi_1\phi_2 g$, for some $g\in H^1(\D^n)$. Since $|\phi_i|=1$ a.e. $\mathbb{T}^n$, then it implies $|f|=|g|$ a.e. $\mathbb{T}^n$, and since $f\in H^2(\D^n)$ therefore it follows that $g\in H^2(\D^n)$. Hence we have $f=\phi_1\phi_2g$ for some $g\in H^2(\D^n)$. Thus $\phi_1 H^2(\D^n) \cap \phi_2 H^2(\D^n) \subset \phi_1\phi_2H^2(\D^n)$. This completes the proof.
    \end{proof}

\begin{lem}
Let $\phi_1$ and $\phi_2$ be distinct inner functions on $\mathbb{D}^n$. If $\phi_1$ and $\phi_2$ depend on disjoint sets of variables, then $\phi_1 \ker T_{\phi_2}^* \cap \phi_2 \ker  T_{\phi_1}^*=\{0\}$.
\end{lem}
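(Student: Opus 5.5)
By the theorem just proved, the condition $\phi_1 \ker T_{\phi_2}^* \cap \phi_2 \ker T_{\phi_1}^*=\{0\}$ is equivalent to $\phi_1 H^2(\D^n)\cap \phi_2 H^2(\D^n)=\phi_1\phi_2 H^2(\D^n)$, i.e. to co-primeness. So I will establish whichever condition is easiest to verify directly. Working with equivalent condition $(iii)$, $\phi_1\ker T_{\phi_2}^*\cap \phi_2 H^2(\D^n)=\{0\}$, is convenient here because of a tensor product structure.

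Relabel the variables so that $\phi_1$ depends only on $\bm{z}'=(z_1,\dots,z_k)$ and $\phi_2$ only on $\bm{z}''=(z_{k+1},\dots,z_n)$. Identify
\[
H^2(\D^n)\cong H^2(\D^k)\otimes H^2(\D^{n-k}).
\]
Under this identification $T_{\phi_1}=T_{\phi_1}^{(k)}\otimes I$ and $T_{\phi_2}=I\otimes T_{\phi_2}^{(n-k)}$, where the factors are isometries on the respective Hardy spaces. Writing $\clk_1:=\ker T_{\phi_1}^{(k)*}$ and $\clk_2:=\ker T_{\phi_2}^{(n-k)*}$, we get
\[
\ker T_{\phi_2}^*=H^2(\D^k)\otimes \clk_2,\qquad \phi_1\ker T_{\phi_2}^*=\phi_1H^2(\D^k)\otimes \clk_2,
\]
and
\[
\phi_2H^2(\D^n)=H^2(\D^k)\otimes \phi_2H^2(\D^{n-k}).
\]

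Next I intersect these two subspaces. The projection onto $\phi_1H^2(\D^k)\otimes\clk_2$ is $P_{\phi_1}^{(k)}\otimes P_{\clk_2}$, and the projection onto $H^2(\D^k)\otimes\phi_2H^2(\D^{n-k})$ is $I\otimes P^{(n-k)}_{\phi_2}$. These two projections commute, so the projection onto the intersection is their product,
\[
P_{\phi_1}^{(k)}\otimes P_{\clk_2}P^{(n-k)}_{\phi_2}=0,
\]
since $\clk_2\perp \phi_2H^2(\D^{n-k})$. Hence the intersection in $(iii)$ is $\{0\}$, which gives co-primeness. Applying the preceding theorem $(iii)\Rightarrow(ii)$ then yields $\phi_1 \ker T_{\phi_2}^*\cap\phi_2\ker T_{\phi_1}^*=\{0\}$. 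Alternatively, the same computation run directly on $(ii)$ gives the claim: there the two projections are $P_{\phi_1}^{(k)}\otimes P_{\clk_2}$ and $P_{\clk_1}\otimes P_{\phi_2}^{(n-k)}$, whose product is $(P_{\phi_1}^{(k)}P_{\clk_1})\otimes(P_{\clk_2}P_{\phi_2}^{(n-k)})=0$.

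The main point needing care is the identification step: that multiplication by a function of $\bm{z}'$ alone acts as $T^{(k)}_{\phi_1}\otimes I$, and that the kernels and ranges factor as claimed. I would check the former on elementary tensors $f(\bm{z}')g(\bm{z}'')$, whose span is dense, and then extend by boundedness. For the latter, I would use $T_{\phi_1}T_{\phi_1}^*=P^{(k)}_{\phi_1}\otimes I$. The remaining facts — commuting projections, and that the intersection of the ranges of commuting projections is the range of their product — are routine.
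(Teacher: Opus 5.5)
Your proof is correct, but it gets there differently from the paper. The paper never identifies $H^2(\D^n)$ with a tensor product. Instead it cites Gu's result that, for symbols in disjoint variables, $T_{\phi_2}^*T_{\phi_1}=T_{\phi_1}T_{\phi_2}^*$. Then, for $f=\phi_1h_1=\phi_2h_2$ with $h_1\in\ker T_{\phi_2}^*$, it computes $h_2=T_{\phi_2}^*T_{\phi_2}h_2=T_{\phi_2}^*T_{\phi_1}h_1=T_{\phi_1}T_{\phi_2}^*h_1=0$.

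Your factorization $T_{\phi_1}=T^{(k)}_{\phi_1}\otimes I$, $T_{\phi_2}=I\otimes T^{(n-k)}_{\phi_2}$ is exactly why that commutation relation holds. So in effect you prove from scratch the case of Gu's corollary that the paper imports. The cost is the bookkeeping you flagged:
\begin{itemize}
\item $\phi_1$, viewed as a function on $\D^k$, is inner there;
\item $\ker(A\otimes I)=\ker A\otimes\clh$;
\item an isometry $V$ tensored with the identity maps $H^2(\D^k)\otimes\clk_2$ onto $VH^2(\D^k)\otimes\clk_2$.
\end{itemize}
The gain is a self-contained argument that makes explicit the stronger fact: $\phi_1\ker T_{\phi_2}^*$ is orthogonal to $\phi_2H^2(\D^n)$, not merely trivially intersecting it. The paper's computation contains this too, implicitly, since it shows $\phi_1h_1\in\ker T_{\phi_2}^*$ for every $h_1\in\ker T_{\phi_2}^*$.

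Both proofs really establish condition $(iii)$; the paper's never uses $h_2\in\ker T_{\phi_1}^*$. Passing from $(iii)$ to $(ii)$ does not need the preceding theorem. Since $\phi_2\ker T_{\phi_1}^*\subseteq\phi_2H^2(\D^n)$, the intersection in $(ii)$ already lies inside the one in $(iii)$.
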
    
\begin{proof}
Suppose $f\in \phi_1 \ker T_{\phi_2}^* \cap \phi_2 \ker T_{\phi_1}^*. $ Then $f=T_{\phi_1}h_1=T_{\phi_2}h_2,$ where $h_1\in\ker T_{\phi_2}^*$ and $h_2\in \ker T_{\phi_1}^*$. Now here $\phi_1,\phi_2$ depend on disjoint sets of variables then by \cite[Corollary 2]{Gu}, $T^*_{\phi_2}T_{\phi_1}=T_{\phi_1}T^*_{\phi_2}$. Moreover $h_1\in \ker T^*_{\phi_2}$ then we have 
\begin{align*}
h_2&=T^*_{\phi_2}T_{\phi_2}h_2\\&=T^*_{\phi_2}T_{\phi_1}h_1\\&=T_{\phi_1}T^*_{\phi_2}h_1=0.
\end{align*}
 Hence $f=T_{\phi_2}h_2=0,$ and $\phi_1 \ker T_{\phi_2}^* \cap \phi_2 \ker T^*_{\phi_1}=\{0\}$.
\end{proof} 
Next, we consider another family of co-prime inner functions introduced by Zhang et al. in \cite{ZTLYZ}. For the sake of completeness, we present an alternative proof of the corresponding result.
\begin{thm}\label{co-primes}
Let $\phi_1, \phi_2$ be inner functions on $\D^n$, where $n>1$. If $\phi_1 H^2(\D^n) + \phi_2 H^2(\D^n)$ has finite co-dimension, then $\phi_1H^2\cap\phi_2H^2=\phi_1\phi_2H^2.$
\end{thm}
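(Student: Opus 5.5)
The plan is to run the $H^1$-division argument from the proof of Theorem \ref{prod_inner}, with the exact identity $\phi_1 p_k+\phi_2 q_k\to 1$ replaced by polynomials that lie in the sum. This is where $n>1$ enters. Put $\clm:=\phi_1 H^2(\D^n)+\phi_2 H^2(\D^n)$.

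First I show $\clm$ is closed. It is the range of the bounded map $(a,b)\mapsto \phi_1a+\phi_2b$ from $H^2\oplus H^2$ into $H^2$, and an operator range of finite codimension is closed. Moreover $\clm$ is invariant under every $T_{z_i}$. Hence the ideal $I:=\clm\cap\mathbb{C}[z_1,\dots,z_n]$ has finite codimension in the polynomial ring, because polynomials are dense and $\clm$ has finite codimension. So $I$ has a finite zero set, and the standard description of finite-codimension submodules (Ahern--Clark type) places that zero set inside $\D^n$, say $\{\lambda^{(1)},\dots,\lambda^{(m)}\}$. By the Nullstellensatz, a high power of any polynomial vanishing on this set lies in $I$. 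In particular, for $N$ large,
\[
q_1(z_1):=\prod_{j}(z_1-\lambda^{(j)}_1)^N\in I,\qquad q_2(z_2):=\prod_j (z_2-\lambda^{(j)}_2)^N\in I .
\]
These are one-variable polynomials in \emph{different} variables, which is possible only because $n>1$. All their zeros lie in $\D$, so they do not vanish on $\T$.

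Next take $f\in\phi_1H^2\cap\phi_2H^2$ with $f=\phi_1h_1=\phi_2h_2$, and write $q_i=\phi_1a_i+\phi_2b_i$ with $a_i,b_i\in H^2$. Then
\[
fq_i=\phi_1\phi_2(h_2a_i+h_1b_i)\in \phi_1\phi_2H^1 .
\]
Since $fq_i\in H^2$ and $|\phi_1\phi_2|=1$ a.e. on $\T^n$, the modulus argument from Theorem \ref{prod_inner} gives $fq_i=\phi_1\phi_2g_i$ with $g_i\in H^2(\D^n)$. Consequently $q_2g_1=q_1g_2$ on $\D^n$.

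The last step divides out $q_1$, and I expect it to be the main obstacle. The analytic part is straightforward. Because $q_1$ depends only on $z_1$ and $q_2$ only on $z_2$, on each hyperplane $\{z_1=\lambda^{(j)}_1\}$ the function $q_2$ vanishes only on a proper subvariety. So $g_1$ vanishes to order $N$ along these hyperplanes, and $u:=g_1/q_1=g_2/q_2$ is holomorphic on $\D^n$ with $\phi_1\phi_2u=f$. The delicate part is showing $u\in H^2(\D^n)$. Away from the boundary this is no problem. Near $\T^n$, the facts that $|q_1|$ is bounded below on $\{|z_1|>r\}$ for $r$ close to $1$ and $|q_2|$ is bounded below on $\{|z_2|>r\}$ let one estimate the integral means of $u$ by those of $g_1$ or $g_2$, depending on which coordinate is near the circle. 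On the remaining region where both $|z_1|,|z_2|\le r$, $u$ is bounded by the maximum principle in these variables. Alternatively, $u$ has boundary values $f\overline{\phi_1\phi_2}$ with $|u|=|f|$ a.e., so $u$ lies in the Nevanlinna-type class and has $L^2$ boundary values, and one argues it is in $H^2$. Once $u\in H^2$, we get $f\in\phi_1\phi_2H^2$, which gives the nontrivial inclusion. The reverse inclusion always holds.
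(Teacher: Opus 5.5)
Your proof is correct, and it takes a genuinely different route from the paper's. The paper splits by dimension. For $n\ge 3$ it uses the Ahern--Clark theorem that a submodule generated by fewer than $n$ functions is either all of $H^2(\D^n)$ or of infinite codimension. So the hypothesis forces $\ker T_{\phi_1}^*\cap\ker T_{\phi_2}^*=\{0\}$, and Theorem \ref{prod_inner} applies. For $n=2$ the paper defers to the proof of Guo--Wang's Theorem 3.5, noting only that $Z(\phi_1)\cap Z(\phi_2)\cap\D^2$ is finite.

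You instead use the Ahern--Clark description of finite-codimension submodules together with the Nullstellensatz to place one-variable polynomials $q_1(z_1)$, $q_2(z_2)$ in the sum. You then combine the $H^1$-division trick of Theorem \ref{prod_inner} with cancellation of these polynomials in distinct variables. This gives one self-contained argument valid for every $n\ge 2$, instead of outsourcing the bidisc case. The paper's route is shorter, and for $n\ge 3$ it yields the stronger fact that the sum is dense.

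Your argument also survives the weaker hypothesis that only the closure of $\phi_1H^2+\phi_2H^2$ has finite codimension. This is the form in which the result is actually used in Theorem \ref{prod_comp}, via finite-dimensionality of $\ker T_{\phi_1}^*\cap\ker T_{\phi_2}^*$. In that case $q_i$ is only a limit of elements $\phi_1a_{i,k}+\phi_2b_{i,k}$. But $\phi_1\phi_2H^1(\D^n)$ is closed in $H^1(\D^n)$, so you still get $fq_i\in\phi_1\phi_2H^1$, exactly as in Theorem \ref{prod_inner}.

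Two remarks on your last step. First, the ``alternative'' argument is not valid as stated. A holomorphic function in a Nevanlinna-type class with $L^2$ boundary values need not lie in $H^2$. For example, $\exp\bigl(\frac{1+z_1}{1-z_1}\bigr)$ is unimodular a.e. on $\T^n$ but is not in $H^2(\D^n)$. You would need a Smirnov-type class.

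Second, your first argument is fine, and simpler than you make it. The $H^2(\D^n)$ norm is the supremum of the diagonal means $\int_{\T^n}|u(r\zeta)|^2\,d\mu$. Fix $r_0<1$ exceeding every $|\lambda^{(j)}_1|$. For $r>r_0$ you have $|q_1(r\zeta_1)|\ge c>0$, so the mean is at most $c^{-2}\|g_1\|^2$. For $r\le r_0$ it is bounded by $\sup_{r_0\overline{\D}^n}|u|^2$. Thus $q_2$ is needed only to make $u=g_1/q_1$ holomorphic on $\D^n$, not for the norm estimate.
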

\begin{proof}
The $n \geq 3$ case, is a straightforward application of Ahern and Clark's result \cite[Corollary]{AC}, which states that the invariant subspace generated by a finite collection of functions $\{f_1, \ldots, f_k\} \in H^2(\D^n)$, where $k<n$, is either the full space or has infinite co-dimension. Thus, $\phi_1 H^2(\D^n) + \phi_2 H^2(\D^n)$ can have finite co-dimension only when $\phi_1 H^2(\D^n) + \phi_2 H^2(\D^n) = H^2(\D^n)$, which is equivalent to saying that $\ker T_{\phi_1}^* \cap \ker T_{\phi_2}^* = \{0\}$ Then, the conclusion follows by using Theorem \ref{prod_inner}. 

The $n=2$ case follows verbatim as in the proof of \cite[Theorem 3.5]{GW}. The only thing to note is that the assumption $\phi_1 H^2(\D^2) + \phi_2 H^2(\D^2)$ has finite co-dimension, implies that $\ker T_{\phi_1}^* \cap \ker T_{\phi_2}^*$ is finite-dimensional. This further implies that 
\[
Z(\phi_1) \cap Z(\phi_2) \cap \D^2,
\]
is a finite set, where $Z(\phi)$, corresponding to any analytic function $\phi$ on $\D^2$, denotes the set of all zeroes of $\phi$ inside $\D^2$. This completes the proof.
\end{proof}
For the next result, we need to recall some facts about rational inner function from the influential book by Walter Rudin \cite{Rudin}. The following definition are from the excellent recent survey by Knese \cite{Knese}. 
\begin{defn}
Given polynomials $p,q \in \mathbb{C}[z_1, \ldots, z_n]$ with no common factors, where $q(\bm{z}) \neq 0$ for $\bm{z} \in \D^n$, the function $\phi = p/q$ is a rational inner function if $|\phi(\bm{\mu})| = 1 $ for almost every $\bm{\mu} \in \mathbb{T}^n$.
\end{defn}
Rudin \cite[Theorem 5.2.5]{Rudin} observed that any rational inner function $\phi$ can be re-written as the following (also see \cite[Theorem 1]{Knese})
\[
\phi(\bm{z}) = \bm{z}^{\bm{k}} \frac{\tilde{p}(\bm{z})}{p(\bm{z})}, \quad (\bm{z} \in \D^n, \bm{k} \in \mathbb{N}^n),
\]
where $\bm{z}^{\bm{k}} = z_1^{k_1} \cdots z_n^{k_n}$, and $\tilde{p}$ is the reflection of the polynomial $p$ defined by $\tilde{p}(\bm{z}) = \bm{z}^{\bm{m}}\overline{p(\frac{1}{\bar z_1}, \ldots, \frac{1}{\bar z_n})}$. Here $\bm{m} = \mbox{deg}(p) = (\mbox{deg}_{z_1}(p), \ldots, \mbox{deg}_{z_n}(p))$ is the multi-degree of $p$. 
Let us quickly observe a simple (probably well-known) fact useful for the sequel.
\begin{lem}\label{common}
Let $\phi_1 = \bm{z}^{\bm{k}}\frac{\tilde{p}(\bm{z})}{p(\bm{z})}$ and $\phi_2 =  \bm{z}^{\bm{l}} \frac{\tilde{q}(\bm{z})}{q(\bm{z})}$ be rational inner functions on $\D^n$. If $\phi_1$ and $\phi_2$ does not have any common inner factor, then $\tilde{p}, \tilde{q}$ does not have any common factor.
\end{lem}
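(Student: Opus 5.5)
We argue by contraposition: assuming $\tilde p$ and $\tilde q$ share a nonconstant common factor, we will produce a nonconstant common inner factor of $\phi_1$ and $\phi_2$. So suppose $r \in \mathbb{C}[z_1,\ldots,z_n]$ is an irreducible nonconstant polynomial dividing both $\tilde p$ and $\tilde q$. The natural candidate for a common inner factor is the rational inner function built from $r$ itself, namely $\tilde r/r$. Accordingly, the plan is to show that $r$ is, up to a monomial factor, the reflection of an irreducible factor $s$ of $p$ (and likewise of $q$). Then $\tilde s/s$ is a rational inner function dividing both $\tilde p/p$ and $\tilde q/q$.

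\textbf{Step 1 (reflection and factors).} The reflection operation is multiplicative up to monomials: for polynomials $a,b$ with no monomial factor one has $\widetilde{ab} = \tilde a\,\tilde b$, and $\tilde{\tilde a} = a$. Hence if $\tilde p = r\, u$, reflecting gives $p = \bm{z}^{\bm{j}}\, \tilde r\, \tilde u$ for some multi-index $\bm{j}$. Since $p$ does not vanish on $\D^n$, we may assume $p$ has no monomial factor, so $\bm{j}=0$, and $s:=\tilde r$ is a factor of $p$. We may also assume $r$ is not a monomial. Indeed, if $r = z_i$ divides $\tilde p$, then $\tilde p(\bm z)=0$ whenever $z_i=0$; but $\tilde p/p$ is the inner part of $\phi_1$, and a monomial factor can be moved into $\bm{z}^{\bm k}$ in Rudin's normal form, so we normalize so that $\tilde p$, $\tilde q$ carry no monomial factors. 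The same argument applied to $\tilde q$ shows $s$ divides $q$.

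\textbf{Step 2 (producing the inner factor).} Since $s$ divides $p$ and $p$ has no zeros in $\D^n$, $s$ has no zeros in $\D^n$. Then $\psi := \tilde s/s$ is a rational inner function, because $|\tilde s| = |s|$ on $\mathbb{T}^n$ (Rudin \cite[Theorem 5.2.5]{Rudin}, see also \cite{Knese}). Writing $p = s p_1$, the factor $\tilde p$ equals $\tilde s\, \tilde p_1$ up to a monomial, which we absorb by the normalization above. Therefore
\[
\phi_1 = \psi \cdot \bm z^{\bm k}\frac{\tilde p_1}{p_1}.
\]
The second factor is again inner, since $p_1$ is zero-free on $\D^n$ and $|\tilde p_1| = |p_1|$ on $\mathbb{T}^n$. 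Similarly $\phi_2 = \psi \cdot \bm z^{\bm l}\tilde q_1/q_1$. So $\psi$ is a common inner factor of $\phi_1$ and $\phi_2$.

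\textbf{Step 3 (the factor is nonconstant).} It remains to check that $\psi$ is not a unimodular constant, and this is the main obstacle. If $\tilde s = c\, s$ for a constant $c$, then $s$ would divide $\tilde p$ as well as $p$. Since $\gcd(p,\tilde p)=1$ for rational inner functions written in lowest terms (because $p$ and $\tilde p$ have no common factors in Rudin's representation), this is impossible unless $s$ is constant. But $s = \tilde r$ is nonconstant, since $r$ is not a monomial. Hence $\psi$ is nonconstant, contradicting the hypothesis that $\phi_1$ and $\phi_2$ have no common inner factor. The delicate points are the monomial bookkeeping in the reflection identity and the coprimality of $p$ and $\tilde p$, which we take from Rudin's representation.
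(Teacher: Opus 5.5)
Your proof is correct and follows essentially the paper's route: you move a common factor between $\tilde p,\tilde q$ and $p,q$ using $\widetilde{ab}=\tilde a\,\tilde b$ and $\tilde{\tilde a}=a$, and then use $\tilde s/s$ as the common inner factor. Two small differences: your Step~3 (nonconstancy of $\tilde s/s$ via $\gcd(p,\tilde p)=1$) makes explicit a point the paper leaves implicit, and the monomial bookkeeping in Step~1 is unnecessary, since a reflection taken at the exact multidegree never has a monomial factor and $p$, being zero-free on $\D^n$, has none either.
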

\begin{proof}
It is clear that $\bm{k}$ and $\bm{l}$ when realized as a subset of $\{1,\ldots,n\}$ must be disjoint, if the rational functions do not have any common inner factor. Furthermore, if we assume $r$ is a non-constant polynomial which is a common factor for $p$ and $q$. Then $\phi_1$ and $\phi_2$ will have the common inner factor $\tilde{r}/r$. This will be a contradiction to the assumption that $\phi_1$ and $\phi_2$ do not have any common inner factor. Now, it is well-known that: $p, q$ does not have a common factor if and only if the reflections $\tilde{p}, \tilde{q}$ does not have a common factor. It is a straightforward consequence of the following properties (see \cite[Section 2]{AJMJES}): $(a)$~$\widetilde{p_1 p_2}=\tilde{p_1}\tilde{p_2}$, for polynomials $p_1,p_2$, and $(b)$~$\tilde{\tilde{p}}=p$, since $\tilde{p}$ is reflection of $p$ at its multi-degree.This completes the proof.
\end{proof}
An interesting sub-collection of rational inner functions comes from the polydisc algebra. Recall that the polydisc algebra $\mathcal{A}(\D^n)$ is the collection of holomorphic functions on $\D^n$ that are continuous on $\overline{\D^n}$. Rudin in \cite[Theorem 5.2.5]{Rudin}, remarkably observed that any inner function $\phi \in \mathcal{A}(\D^n)$ is actually a rational inner function $\phi(z) =  \bm{z}^{\bm{k}}\frac{\tilde{p}(\bm{z})}{p(\bm{z})}$ such that $p(\bm{z})$ has no zero on $\overline{\D^n}$. The advantage in this case, is that $1/p \in H^{\infty}(\D^n)$. Let us now present a few results essential for the sequel.
\begin{thm}\label{disc_case}
Let $\phi_1, \phi_2$ be inner functions inside $\mathcal{A}(\D^2)$. If $\phi_1$ and $\phi_2$ do not have any common inner factor. Then $\phi_1$ and $\phi_2$ are co-prime.
\end{thm}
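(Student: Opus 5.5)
Write $\phi_1 = \bm{z}^{\bm{k}}\tilde p/p$ and $\phi_2 = \bm{z}^{\bm{l}}\tilde q/q$ with $p,q$ zero-free on $\overline{\D^2}$, as in Rudin's theorem. The plan is to verify condition $(iii)$ of the characterization theorem, or equivalently to show directly that $\phi_1H^2(\D^2)\cap\phi_2H^2(\D^2)\subseteq \phi_1\phi_2H^2(\D^2)$.

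Since $1/p,1/q\in H^\infty(\D^2)$ and multiplication by $p,q$ preserves $H^2$, for $f=\phi_1h_1=\phi_2h_2$ we have
\[
q\,\bm{z}^{\bm{k}}\tilde p\, h_1 = p\,\bm{z}^{\bm{l}}\tilde q\, h_2 =: F .
\]
So $F$ lies in both $\bm{z}^{\bm{k}}\tilde p\,H^2$ and $\bm{z}^{\bm{l}}\tilde q\,H^2$.

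By Lemma \ref{common}, $\bm{z}^{\bm{k}}$ and $\bm{z}^{\bm{l}}$ involve disjoint variables, and $\tilde p,\tilde q$ have no common factor. Together with the fact that $p$ and $q$ have no zeros in $\overline{\D^2}$, this should make the two polynomials $P_1:=\bm{z}^{\bm{k}}\tilde p$ and $P_2:=\bm{z}^{\bm{l}}\tilde q$ coprime in $\mathbb{C}[z_1,z_2]$. We must also exclude a common monomial factor. A variable $z_i$ cannot divide $\tilde p$, since $\tilde p$ is a reflection at the exact multidegree, so its constant term in each variable is nonzero. Hence $P_1,P_2$ share no factor.

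For coprime polynomials in two variables, the common zero set $Z(P_1)\cap Z(P_2)$ is finite. The key step is then to show
\[
P_1H^2(\D^2)\cap P_2H^2(\D^2) = P_1P_2H^2(\D^2).
\]
Two routes are available.

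\emph{Ideal route.} Work in the algebra $\mathcal{A}$ of functions holomorphic near $\overline{\D^2}$, or use a division argument. If $P_1 g_1 = P_2 g_2$ with $g_i\in H^2$, then $P_1$ divides $P_2g_2$ in the ring of holomorphic functions on $\D^2$. Since the local rings there are UFDs and $P_1,P_2$ are coprime as germs at all but finitely many points, $g_2/P_1$ is holomorphic on $\D^2$ off a finite set. By Hartogs it is holomorphic on all of $\D^2$.

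\emph{Boundary route.} An alternative is to mimic the proof of Theorem \ref{co-primes} in the case $n=2$, following \cite[Theorem 3.5]{GW}, where finiteness of $Z(\phi_1)\cap Z(\phi_2)\cap\D^2$ was the only input used. Here $Z(\phi_i)\cap\D^2=Z(P_i)\cap\D^2$, and coprimality gives finiteness. I would present this reduction as the main argument, citing that proof, and use the Hartogs argument as a self-contained backup.

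Once $g := f/(\phi_1\phi_2)$ is known to be holomorphic on $\D^2$, finish as in Theorem \ref{prod_inner}: $|g|=|f|$ a.e.\ on $\T^2$. To conclude $g\in H^2$ rather than only in the Nevanlinna class, the cleanest tool is $H^2$-closedness of $\phi_1\phi_2H^2$ combined with a Smirnov-type argument. Alternatively, note that $g=h_2 q\,/\,(\bm{z}^{\bm{k}}\tilde p)\cdot(\text{bounded})$ and use the ideal/local argument to get membership directly.

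The main obstacle is the division step: passing from coprimality of polynomials, with finite common zeros, to the $H^2$-level intersection identity, uniformly up to the boundary. Zeros of $\tilde p$ may lie on $\T^2$, so $1/\tilde p$ is not bounded. I would first try the \cite{GW} argument via finiteness of common zeros, since the paper has already invoked it.
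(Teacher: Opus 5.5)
\textbf{There is a genuine gap: the step that carries the whole theorem is left unproved.}

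Your reduction is sound, and it is the one the paper uses. Because $p^{\pm1},q^{\pm1}\in H^\infty(\D^2)$, one has $\phi_iH^2(\D^2)=P_iH^2(\D^2)$, where $P_1=\bm{z}^{\bm{k}}\tilde p$ and $P_2=\bm{z}^{\bm{l}}\tilde q$ are coprime. So the theorem is \emph{equivalent} to $P_1H^2\cap P_2H^2=P_1P_2H^2$. That identity is the entire content of the statement, and you do not prove it.

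The route you call your main argument misreads Theorem~\ref{co-primes}. Its hypothesis is that $\phi_1H^2+\phi_2H^2$ has finite co-dimension, and you never verify this. Finiteness of $Z(\phi_1)\cap Z(\phi_2)\cap\D^2$ cannot be the only input. For a singular inner $S$, the functions $\phi_1=S(z_1)$ and $\phi_2=z_2S(z_1)$ have no common zeros in $\D^2$, yet $\phi_1H^2\cap\phi_2H^2=\phi_2H^2\neq\phi_1\phi_2H^2$. The paper supplies exactly the missing hypothesis. It notes $\phi_1H^2+\phi_2H^2=P_1H^2+P_2H^2$, and for coprime polynomials this sum has finite co-dimension by \cite[Lemma 3.7, Step 2]{GW}. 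Theorem~\ref{co-primes} then applies.

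Your Hartogs route is sound up to holomorphy. With $g_1=qh_1$ and $g_2=ph_2$ in $H^2$, you have $P_1g_1=P_2g_2$. Then $u:=g_2/P_1=g_1/P_2$ glues to a holomorphic function off the finite set $Z(P_1)\cap Z(P_2)$, and Hartogs extends it to all of $\D^2$.

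The finish you propose does not work. Knowing $|u|=|f|$ a.e.\ on $\T^2$ says nothing for a merely holomorphic $u$; for example, $1/S(z_1)$ is holomorphic with unimodular boundary values but is not in $H^2$. ``Closedness plus a Smirnov-type argument'' does not supply the needed estimate.

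Your diagnosis of the obstacle is also off. $\tilde p$ cannot vanish on $\T^2$, since $|\tilde p|=|p|$ there. The missing observation is that $p$, being zero-free on the compact set $\overline{\D^2}$, is zero-free on $\{|z_1|,|z_2|\le R\}$ for some $R>1$. For $\zeta\in\T^2$ and $\bm{m}=\deg p$,
$|P_1(r\zeta)|=r^{|\bm{k}|+|\bm{m}|}|p(\zeta/r)|$.
Hence $|P_1|\ge\delta>0$ on $r\T^2$ for $1/R\le r<1$. This gives $\int_{\T^2}|u(r\zeta)|^2\,d\mu\le\delta^{-2}\|g_2\|^2$ for those $r$, so $u\in H^2(\D^2)$ and $f=\phi_1\phi_2u$.

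With this estimate added, the Hartogs route becomes a complete, elementary proof that bypasses \cite{GW} and Theorem~\ref{co-primes} entirely. Without it, or without the finite co-dimension step, the proposal does not prove the theorem.
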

\begin{proof}
It is clear from the assumption that $\phi_1=\bm{z}^{\bm{k}} \frac{\tilde{p}}{p}$ and $\phi_2 = \bm{z}^{\bm{l}}\frac{\tilde{q}}{q}$ for some polynomials $p,q$ which do not have any zeroes on $\overline{\D^2}$. It is clear from Lemma \ref{common} that if $\phi_1$ and $\phi_2$ do not have any common inner factor then $\tilde{p}$ and $\tilde{q}$ do not have any common factor. Moreover, since, we are working on $\D^2$, the assumption will force that either $\bm{k} = (m,0)$ and $\bm{l} = (0,n)$, or vice-versa for $m,n \in \mathbb{N}$. In particular, we get $\bm{z}^{\bm{k}} \tilde{p}$ and $\bm{z}^{\bm{l}} \tilde{q}$ do not have any common factor. Therefore, following \cite[ Lemma 3.7, Step 2]{GW}, we get that $\bm{z}^{\bm{k}}\tilde{p} H^2(\D^2) + \bm{z}^{\bm{l}}\tilde{q} H^2(\D^2)$ is a subspace of finite co-dimension and hence, it is also closed. Now, $p, q$ have no zero on $\overline{\D^2}$ imply that $1/p, 1/q \in H^{\infty}(\D^2)$. Thus,
\[
\phi_1 H^2(\D^2) + \phi_2 H^2(\D^2) =  \bm{z}^{\bm{k}}\frac{\tilde{p}}{p} H^2(\D^2) + \bm{z}^{\bm{l}}\frac{\tilde{q}}{q} H^2(\D^2) \subseteq \bm{z}^{\bm{k}}\tilde{p} H^2(\D^2) + \bm{z}^{\bm{l}}\tilde{q} H^2(\D^2),
\]
and hence, $\phi_1 H^2(\D^2) + \phi_2 H^2(\D^2) = \bm{z}^{\bm{k}}\tilde{p} H^2(\D^2) + \bm{z}^{\bm{l}}\tilde{q} H^2(\D^2)$. Thus, $\phi_1 H^2(\D^2) + \phi_2 H^2(\D^2)$ has finite co-dimension. Therefore, by Theorem \ref{co-primes}, we get $\phi_1$ and $\phi_2$ are co-prime. This completes the proof.
\end{proof}

\section{On Hardy space over the unit polydisc}\label{sec6}
 We begin this section by addressing the main question of determining when the commutator $[P_{\phi_1}, P_{\phi_2}]$ has finite--rank. Recall that using the Halmos decomposition (recall the expression in (\ref{L_0})) for the pair of projections $(P_{\phi_1}, P_{\phi_2})$ we got
\[
\cll_0 =~\clm \ominus \big( \cll_{00} \oplus \cll_{01} \big),
\]
where
\[
\cll_{00} = \phi_1 H^2(\D^n) \cap \phi_2 H^2(\D^n) \text{ and } \cll_{01}= \left(\phi_1 H^2(\mathbb{D}^n )\right)\cap\left(\phi_2 H^2(\mathbb{D}^n )\right)^{\perp} = \phi_1 \ker~T_{\bar \phi_2 \phi_1}.
\]
So, if we take a pair of \textit{co-prime} inner functions $\phi_1, \phi_2$, then we get
\[
\cll_0 = \phi_1 H^2(\D^n) \ominus \big(\phi_1 \phi_2 H^2(\D^n) \oplus \phi_1 \ker T_{\bar \phi_2 \phi_1}\big) = \phi_1 \big(\ker T_{\phi_2}^* \ominus \ker T_{\bar \phi_2 \phi_1} \big)
\]
Let us first collect an useful result.
\begin{lem}\label{lem_com}
Let $\phi_1, \phi_2$ be distinct inner functions on $\D^n$. Then $$ \ker T_{\bar \phi_2} \ominus \ker~T_{\bar \phi_2 \phi_1} = \overline{ran  } [T_{\bar \phi_1}, T_{\phi_2}].$$
\end{lem}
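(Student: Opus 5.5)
The plan is to work with the operator $C:=[T_{\bar \phi_1}, T_{\phi_2}] = T_{\bar\phi_1}T_{\phi_2}-T_{\phi_2}T_{\bar\phi_1}$ on $H^2(\D^n)$ and with its adjoint $C^* = T_{\bar\phi_2}T_{\phi_1}-T_{\phi_1}T_{\bar\phi_2}$. The identity will follow from two facts: $\overline{\mbox{ran}}\, C$ sits inside $\ker T_{\bar\phi_2}$, and inside that subspace the orthogonal complement of $\mbox{ran}\, C$ is exactly $\ker T_{\bar\phi_2\phi_1}$. Throughout I use only that $\phi_1,\phi_2$ are inner, so $T_{\phi_i}$ are isometries, and that co-analytic Toeplitz operators multiply: $T_{\bar\phi_2}T_{\bar\phi_1}=T_{\bar\phi_1}T_{\bar\phi_2}=T_{\bar\phi_1\bar\phi_2}$. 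Neither fact depends on the number of variables.

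\emph{Step 1: $\mbox{ran}\, C\subseteq \ker T_{\bar\phi_2}$.} I compute $T_{\bar\phi_2}C$. For the first term,
\[
T_{\bar\phi_2}T_{\bar\phi_1}T_{\phi_2}=T_{\bar\phi_1}T_{\bar\phi_2}T_{\phi_2}=T_{\bar\phi_1}.
\]
For the second term, $T_{\bar\phi_2}T_{\phi_2}T_{\bar\phi_1}=T_{\bar\phi_1}$. Hence $T_{\bar\phi_2}C=0$, and therefore $\overline{\mbox{ran}}\, C\subseteq\ker T_{\bar\phi_2}$.

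\emph{Step 2: $\ker T_{\bar\phi_2\phi_1}\subseteq\ker T_{\bar\phi_2}$.} Let $h\in\ker T_{\bar\phi_2\phi_1}$, so $\phi_1h\perp\phi_2H^2(\D^n)$. For any $g\in H^2(\D^n)$, since $T_{\phi_1}$ is an isometry,
\[
\langle h,\phi_2 g\rangle=\langle \phi_1 h,\phi_1\phi_2 g\rangle =0,
\]
because $\phi_1\phi_2 g\in\phi_2H^2(\D^n)$. Thus $h\in\ker T_{\bar\phi_2}$.

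\emph{Step 3: identify $\ker C^*\cap\ker T_{\bar\phi_2}$ with $\ker T_{\bar\phi_2\phi_1}$.} Take $h\in\ker T_{\bar\phi_2}$. The term $T_{\phi_1}T_{\bar\phi_2}h$ vanishes, so $C^*h=T_{\bar\phi_2}T_{\phi_1}h=T_{\bar\phi_2\phi_1}h$. Consequently $h\in\ker C^*$ if and only if $h\in\ker T_{\bar\phi_2\phi_1}$. Combined with Step 2, this gives $\ker C^*\cap\ker T_{\bar\phi_2}=\ker T_{\bar\phi_2\phi_1}$.

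\emph{Conclusion.} By Step 1, $\overline{\mbox{ran}}\, C$ is a closed subspace of $\ker T_{\bar\phi_2}$. Its orthogonal complement inside $\ker T_{\bar\phi_2}$ is
\[
\ker T_{\bar\phi_2}\cap(\mbox{ran}\, C)^\perp=\ker T_{\bar\phi_2}\cap\ker C^*=\ker T_{\bar\phi_2\phi_1},
\]
using Step 3. Hence $\overline{\mbox{ran}}\, C=\ker T_{\bar\phi_2}\ominus\ker T_{\bar\phi_2\phi_1}$. The only delicate point is Step 2: it is needed so that $\ker T_{\bar\phi_2\phi_1}$ is genuinely a subspace of $\ker T_{\bar\phi_2}$, which is what makes the difference $\ominus$ in the statement meaningful.
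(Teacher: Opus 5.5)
Your proof is correct. Its first half matches the paper's: Step 1 is the paper's check that the range of $[T_{\bar\phi_1},T_{\phi_2}]$ lies in $\ker T_{\bar\phi_2}$, and Step 2 is the paper's opening observation $\ker T_{\bar\phi_2\phi_1}\subseteq\ker T_{\bar\phi_2}$. The paper gets Step 2 by applying $T_{\phi_1}^*$ and commuting; you get it by an inner-product computation.

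The two routes diverge on the reverse inclusion. The paper proves orthogonality of $\mbox{ran}\,C$ to $\ker T_{\bar\phi_2\phi_1}$ via the adjoint, which is only the easy half of your Step 3. It then proves $\ker T_{\bar\phi_2}\ominus\ker T_{\bar\phi_2\phi_1}\subseteq\overline{\mbox{ran}}\,C$ with a sequence argument:
\begin{itemize}
\item it rewrites this space as $\overline{\mbox{ran}}\,T_{\bar\phi_1\phi_2}\cap\ker T_{\bar\phi_2}$;
\item it picks $y_k$ with $T_{\bar\phi_1}T_{\phi_2}y_k\to x$;
\item it notes that $T_{\bar\phi_1}y_k=T_{\bar\phi_2}T_{\bar\phi_1}T_{\phi_2}y_k\to T_{\bar\phi_2}x=0$, so $Cy_k\to x$.
\end{itemize}

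You instead prove the full identity $\ker C^*\cap\ker T_{\bar\phi_2}=\ker T_{\bar\phi_2\phi_1}$ and take orthogonal complements inside $\ker T_{\bar\phi_2}$. This gives both inclusions at once with no approximating sequence. It is shorter, and it isolates the real content of the lemma: on $\ker T_{\bar\phi_2}$ the adjoint $C^*$ acts exactly as $T_{\bar\phi_2\phi_1}$.

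The paper's argument is more hands-on in return. For each $x$ it exhibits an explicit approximating sequence from $\mbox{ran}\,C$, namely the same $y_k$ that approximate $x$ through $T_{\bar\phi_1\phi_2}$.

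Neither argument uses distinctness of $\phi_1,\phi_2$ or the number of variables.
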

\begin{proof}
Let us begin by noting that $\ker T_{\bar \phi_2 \phi_1} = \ker T_{\phi_2}^* T_{\phi_1} \subseteq \ker T_{\phi_2}^*$. This is true because, if $f \in \ker T_{\phi_2}^* T_{\phi_1} $, then $T_{\phi_2}^* T_{\phi_1}  f = 0$, therefore, multiplying on the left hand side by $T_{\phi_1}^*$ gives $T_{\phi_2}^* f = 0$. Now, to prove the equality in the statement, we will first prove that $\mbox{ran } [T_{\bar \phi_1}, T_{\phi_2}] \subseteq \ker T_{\bar \phi_2} \ominus \ker~T_{\bar \phi_2 \phi_1}$.  Let $f \in \mbox{ran } [T_{\bar \phi_1}, T_{\phi_2}]$, so there exists a $g \in H^2(\D^n)$ such that
\[
f = (T_{\bar \phi_1 \phi_2} - T_{\phi_2} T_{\bar \phi_1})g.
\]
So, $T_{\phi_2}T_{\phi_2}^* f = T_{\phi_2}T_{\phi_2}^* (T_{\phi_1}^* T_{\phi_2} - T_{\phi_2} T_{\bar \phi_1})g = (T_{\phi_2} T_{\phi_1}^*  - T_{\phi_2} T_{\bar \phi_1})g = 0$. Thus, $f \in \ker T_{\phi_2}^*$. Furthermore, let $h \in \ker T_{\bar \phi_2 \phi_1}  = \ker T_{\phi_2}^* T_{\phi_1} \subseteq \ker T_{\phi_2}^*$. Then
\[
\langle f , h \rangle_{H^2(\D^n)} = \langle (T_{\bar \phi_1 \phi_2} - T_{\phi_2} T_{\bar \phi_1})g, h \rangle_{H^2(\D^n)} = \langle (g, T_{\bar \phi_2 \phi_1} - T_{\phi_1} T_{\bar \phi_2}) h \rangle_{H^2(\D^n)} =0,
\]
proves that $f \in \ker T_{\phi_2}^* \ominus \ker T_{\bar \phi_2 \phi_1}$. Thus, we have proved $\mbox{ran } [T_{\bar \phi_1}, T_{\phi_2}] \subseteq \ker T_{\bar \phi_2} \ominus \ker~T_{\bar \phi_2 \phi_1}$ and therefore, $\overline{\mbox{ran }} [T_{\bar \phi_1}, T_{\phi_2}] \subseteq \ker T_{\bar \phi_2} \ominus \ker~T_{\bar \phi_2 \phi_1}$. For the opposite inclusion, we will prove that $\ker T_{\phi_2}^* \ominus \ker T_{\bar \phi_2 \phi_1} = \overline{\mbox{ran}} \, T_{\bar \phi_1 \phi_2} \cap \ker T_{\phi_2}^* \subseteq \overline{\mbox{ran}} [T_{\bar \phi_1}, T_{\phi_2}]$. Let $x \in \overline{\mbox{ran}} \, T_{\bar \phi_1 \phi_2} \cap \ker T_{\phi_2}^*$, so there must exist $\{y_n\} \in H^2(\D^n)$ such that $x = \lim_{n \rightarrow \infty}T_{\phi_1}^* T_{\phi_2} y_n$. Since $x \in \ker T_{\phi_2}^*$, we get
\[
\lim_{n \rightarrow \infty} T_{\phi_1}^* y_n = \lim_{n \rightarrow \infty} T_{\phi_2}^* T_{\phi_1}^* T_{\phi_2} y_n =  T_{\phi_2}^* x  = 0.
\]
This implies $x = \lim_{n \rightarrow \infty} [T_{\phi_1}^*, T_{\phi_2}]y_n$. Thus, $x \in \overline{\mbox{ran}} [T_{\phi_1}^*, T_{\phi_2}]$. This completes the proof.
\end{proof}
Using Lemma \ref{lem_com}, for pair of co-prime inner function we get
\begin{equation}\label{L_n}
\cll_0 = \phi_1 \overline{\mbox{ran}} [T_{\phi_1}^*, T_{\phi_2}].
\end{equation}
 We are now ready to state our first main result of this section. 
\begin{thm}\label{polyfinite}
Let $\phi_1, \phi_2$ be a pair of co-prime inner functions on $\D^n$ where $n>1$, then the followings are equivalent:\\ \vspace{1mm}
$(i)$~ $[P_{\phi_1},P_{\phi_2}]$ is of finite--rank,\\ \vspace{1mm}
$(ii)$~$\phi_1,\phi_2$ dependent on disjoint set of variables,\\ \vspace{1mm}
$(iii)$ ~ $[P_{\phi_1},P_{\phi_2}]=0$.
\end{thm}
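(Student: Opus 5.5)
The implication $(iii)\Rightarrow(i)$ is trivial. For $(ii)\Rightarrow(iii)$, I would use \cite[Corollary 2]{Gu}, as in the lemma preceding Theorem \ref{co-primes}. If $\phi_1,\phi_2$ depend on disjoint sets of variables, then $T_{\phi_2}^*T_{\phi_1}=T_{\phi_1}T_{\phi_2}^*$, and hence also $T_{\phi_1}^*T_{\phi_2}=T_{\phi_2}T_{\phi_1}^*$. Therefore
\[
P_{\phi_1}P_{\phi_2}=T_{\phi_1}(T_{\phi_1}^*T_{\phi_2})T_{\phi_2}^*=T_{\phi_1}T_{\phi_2}T_{\phi_1}^*T_{\phi_2}^*=T_{\phi_2}T_{\phi_1}T_{\phi_2}^*T_{\phi_1}^*=P_{\phi_2}P_{\phi_1},
\]
where the middle equality uses that $T_{\phi_1},T_{\phi_2}$ commute (analytic symbols) and their adjoints commute. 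So the commutator vanishes.

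The real content is $(i)\Rightarrow(ii)$, and I would route it through the commutator $C:=[T_{\phi_1}^*,T_{\phi_2}]$. By Corollary \ref{K_finite}(i), $[P_{\phi_1},P_{\phi_2}]$ has finite rank if and only if $\cll_0$ is finite-dimensional. Because $\phi_1,\phi_2$ are co-prime, identity (\ref{L_n}) together with Lemma \ref{lem_com} gives
\[
\cll_0=\phi_1\,\overline{\mbox{ran}}\,C=\phi_1\big(\ker T_{\phi_2}^*\ominus\ker T_{\bar\phi_2\phi_1}\big).
\]
Since $T_{\phi_1}$ is an isometry, $(i)$ is therefore equivalent to $C$ having finite rank. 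The plan has two steps. First, show that for $n>1$ a finite-rank $C$ must in fact be zero. Second, once $C=0$, that is, $T_{\phi_1}^*T_{\phi_2}=T_{\phi_2}T_{\phi_1}^*$, we get $\cll_0=\{0\}$ and hence $[P_{\phi_1},P_{\phi_2}]=0$ by Corollary \ref{K_finite}(ii). The characterization of commuting projections in \cite{DPS} (co-prime functions have no nontrivial common inner factor $\psi$) then yields $(ii)$.

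The main obstacle is the first step: proving that the finite-dimensional space $\clk:=\ker T_{\phi_2}^*\ominus\ker T_{\bar\phi_2\phi_1}$ is $\{0\}$. I would first try to show that $\clk$ is invariant under every $T_{z_i}^*$. The space $\ker T_{\phi_2}^*$ is co-invariant because $T_{z_i}^*$ commutes with $T_{\phi_2}^*$. Then I would check that $\ker T_{\bar\phi_2\phi_1}=\{f: \phi_1 f\perp\phi_2H^2\}$ is mapped into itself by $T_{z_i}$ within $\ker T_{\phi_2}^*$, or at least that its complement there is $T_{z_i}^*$-invariant. Granting this, $\clk$ is a finite-dimensional jointly co-invariant subspace of $H^2(\D^n)$, so $\clk^\perp$ is a shift-invariant subspace of finite co-dimension. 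By Ahern--Clark \cite{AC}, $\clk$ is then spanned by finitely many (derivatives of) reproducing kernels at points of $\D^n$. If $\clk\neq\{0\}$, it would contain a kernel $k_\lambda\in\ker T_{\phi_2}^*$ with $\phi_1k_\lambda\perp\phi_1\ker T_{\bar\phi_2\phi_1}$. I would test this against the functions in $\ker T_{\phi_2}^*$ that depend freely on a variable absent from the relevant slice, aiming to show that the orthogonality forces $\phi_2(\lambda)\neq0$, contradicting $k_\lambda\in\ker T_{\phi_2}^*$. That is where the infinite-dimensionality available only for $n>1$ should enter. If the co-invariance check fails, my fallback is an asymptotic argument: compress $C$ by $T_{\bm{z}^{k}}^*(\cdot)T_{\bm{z}^{k}}$, and use that $T_{\bar\phi_1\phi_2}$ is Toeplitz while a finite-rank operator is compact, so these compressions tend to zero in norm; this would force $T_{\phi_2}T_{\phi_1}^*$ to coincide with $T_{\bar\phi_1\phi_2}$ up to a term that is then shown to vanish.
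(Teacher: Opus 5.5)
\textbf{The gap.} Your reduction agrees with the paper's. By Corollary \ref{K_finite}(i), identity (\ref{L_n}) and Lemma \ref{lem_com}, condition (i) is equivalent to $C=[T_{\phi_1}^*,T_{\phi_2}]$ having finite rank. Your proof of (ii)$\Rightarrow$(iii) is the paper's. Deducing (ii) from $C=0$ via \cite{DPS} is also legitimate, since co-primeness forces the common inner factor $\psi$ to be constant.

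The gap is the step that carries all the content: that a finite-rank $C$ must vanish when $n>1$. The paper does not prove this by hand. It writes $C=T_{\bar\phi_1\phi_2}-T_{\phi_2}T_{\bar\phi_1}=H_{\bar\phi_2}^*H_{\bar\phi_1}$ and invokes \cite[Corollary 1]{Gu}. That is a rigidity theorem for Hankel products on $H^2(\D^n)$, $n>1$: if this product has finite rank, then for each $i$ one of $\phi_1,\phi_2$ does not depend on $z_i$. Nothing in your proposal substitutes for that input.

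\textbf{Why your routes fail.} Your primary route breaks at its first claim, because $\clk=\overline{\mbox{ran}}\,C$ is not $T_{z_i}^*$-invariant in general. Take $n=2$, $\phi_1=z_1$, $\phi_2=z_2b_a(z_1)$, with $b_a$ a Blaschke factor and $0<|a|<1$. This pair is co-prime: an element of $z_1H^2(\D^2)\cap z_2b_a(z_1)H^2(\D^2)$ has the form $z_2b_a(z_1)g$ with $g(0,\cdot)=0$. On $H^2(\D)\otimes H^2(\D)$ one has $C=[T_{\bar z},T_{b_a}]\otimes T_z$. Its range is $\{q(z_1)z_2w(z_2): w\in H^2(\D)\}$, where $q=(b_a-b_a(0))/z$. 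This range is not invariant under $T_{z_2}^*$, which sends $q(z_1)z_2$ to $q(z_1)$. So co-invariance could only come from the finite-rank hypothesis, and you give no mechanism for that. The subsequent reproducing-kernel step is only stated as an aim. In one variable that step actually succeeds: for $\phi_1=z$, $\phi_2=b_a$ one gets $\clk=\mathbb{C}k_a$ with $\phi_2(a)=0$. So this is precisely where $n>1$ would have to be used, and you leave it blank.

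The fallback is circular. The convergence $\|T_{\bm{z}^{k}}^*CT_{\bm{z}^{k}}\|\to0$ holds for every compact $C$. It merely says $T_{\bm{z}^{k}}^*T_{\phi_2}T_{\phi_1}^*T_{\bm{z}^{k}}\to T_{\bar\phi_1\phi_2}$ in norm, and the ``term then shown to vanish'' is $C$ itself. Nothing in the fallback uses $n>1$, yet for $n=1$ the conclusion is false: $\phi_1=z$, $\phi_2=b_a$ are co-prime and $C=[T_{\bar z},T_{b_a}]$ has rank one. A correct proof must exploit a genuinely multivariable property of Hankel operators on the polydisc, which is exactly what \cite[Corollary 1]{Gu} provides.
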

\begin{proof}
$(i)\implies (ii)$ By Corollary \ref{K_finite}, $[P_{\phi_1},P_{\phi_2}]$ is of finite--rank if and only if $\cll_0 = \phi_1 \overline{\mbox{ran}} [T_{\phi_1}^*, T_{\phi_2}]$, is finite dimensional. This implies that if $[P_{\phi_1},P_{\phi_2}]$ is of finite--rank then ${\mbox{ran}} [T_{\phi_1}^*, T_{\phi_2}]$ is finite dimensional. Hence $[T_{\phi_1}^*, T_{\phi_2}]$ is of finite--rank. If $[T_{\phi_1}^*, T_{\phi_2}]$ is  of finite--rank then we have $[T_{\phi_1}^*, T_{\phi_2}]=T_{\bar\phi_1\phi_2}-T_{\phi_2}T^*_{\phi_1}=H^*_{\bar\phi_2}H_{\bar\phi_1}$, is of finite--rank. Now here by using \cite[Corollary 1]{Gu}, we get that for each $i=1,\ldots,n$, either $\bar{\phi_1}(\bm{z})$ or $\bar{\phi_2}(\bm{z})$ is analytic in $z_i$. In other words, $\phi_1$ and $\phi_2$ are dependent on disjoint set of variables.\\
$(ii)\implies (iii)$ If $\phi_1,\phi_2$ dependent on disjoint set of variables then by \cite[Corollary 2]{Gu} we get $H^*_{\bar\phi_2}H_{\bar\phi_1}=T_{\bar\phi_1\phi_2}-T_{\phi_2}T^*_{\phi_1}=0$. Hence we have
\begin{align*}
P_{\phi_2} P_{\phi_1} = T_{\phi_2} T_{\phi_2}^*T_{\phi_1} T_{\phi_1}^* &= T_{\phi_2} T_{\phi_1}T_{\phi_2}^* T_{\phi_1}^* \\
&= T_{\phi_1} T_{\phi_2}T_{\phi_1}^* T_{\phi_2}^*\\
&= T_{\phi_1} T_{\phi_1}^*T_{\phi_2} T_{\phi_2}^* \\
&= P_{\phi_1} P_{\phi_2}.
\end{align*}
$(iii)\implies (i)$ is trivial.
\end{proof}

Theorem \ref{polyfinite} have direct application to inner functions in $\mathcal{A}(\D^2)$.
\begin{cor}\label{disc_finite}
Let $\phi_1, \phi_2$ be inner functions in $\mathcal{A}(\D^2)$. Then $[P_{\phi_1}, P_{\phi_2}]$ is finite-rank if and only if $[P_{\phi_1}, P_{\phi_2}] = 0$.
\end{cor}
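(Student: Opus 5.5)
The plan is to reduce Corollary \ref{disc_finite} to Theorem \ref{polyfinite} by first stripping off the common inner factors of $\phi_1$ and $\phi_2$, and then invoking Theorem \ref{disc_case} to supply co-primeness. The implication $[P_{\phi_1},P_{\phi_2}]=0 \Rightarrow$ finite rank is trivial, so only the forward direction needs work.

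\textbf{Factoring out the common part.} Write $\phi_1=\bm{z}^{\bm{k}}\tilde{p}/p$ and $\phi_2=\bm{z}^{\bm{l}}\tilde{q}/q$ with $p,q$ zero-free on $\overline{\D^2}$, using Rudin's theorem. Let $r$ be a greatest common polynomial factor of $p$ and $q$, and let $\bm{z}^{\bm{j}}$ be the common monomial part, $j_i=\min(k_i,l_i)$. Set
\[
\phi=\bm{z}^{\bm{j}}\,\tilde{r}/r,\qquad \psi_i=\phi_i/\phi .
\]
By property (a) of reflections and the zero-freeness of the factors of $p,q$, each of $\phi,\psi_1,\psi_2$ is again an inner function in $\mathcal{A}(\D^2)$. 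I would then check that $\psi_1,\psi_2$ have no common inner factor. The cleanest route is the reasoning in the proof of Lemma \ref{common}. A common inner factor of two rational inner functions forces either a shared monomial variable power or a shared polynomial factor of the denominators, and both have been removed here. This bookkeeping is the step I expect to be the main obstacle, since inner factors of rational inner functions in two variables need care. For example, one must verify that a rational inner divisor must itself be of the form $\bm{z}^{\bm{a}}\tilde{s}/s$, where $s$ divides the denominator.

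\textbf{Transferring finite rank.} The computation in Lemma \ref{gcd} is purely operator-theoretic and uses only $T_\phi^*T_\phi=I$, so it holds verbatim on $H^2(\D^2)$:
\[
[P_{\phi_1},P_{\phi_2}]=T_\phi[P_{\psi_1},P_{\psi_2}]T_\phi^* .
\]
Since $T_\phi$ is an isometry, $[P_{\phi_1},P_{\phi_2}]$ is finite-rank if and only if $[P_{\psi_1},P_{\psi_2}]$ is, and likewise one vanishes if and only if the other does. By Theorem \ref{disc_case}, $\psi_1$ and $\psi_2$ are co-prime in the sense of Definition \ref{co-pr}.

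\textbf{Applying Theorem \ref{polyfinite}.} If $\psi_1=\psi_2$, then co-primeness forces both to be constant, and the commutator is zero. Otherwise Theorem \ref{polyfinite} applies and shows that finite rank of $[P_{\psi_1},P_{\psi_2}]$ gives $[P_{\psi_1},P_{\psi_2}]=0$. Conjugating back by $T_\phi$ then yields $[P_{\phi_1},P_{\phi_2}]=0$, which completes the forward direction.
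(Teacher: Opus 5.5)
Your proposal is correct and is essentially the paper's proof: remove the common inner factor using the identity $[P_{\phi_1},P_{\phi_2}]=T_\phi[P_{\psi_1},P_{\psi_2}]T_\phi^*$ from Lemma \ref{gcd}, obtain co-primeness of $\psi_1,\psi_2$ from Theorem \ref{disc_case}, and finish with Theorem \ref{polyfinite}; your explicit choice $\phi=\bm{z}^{\bm{j}}\tilde{r}/r$ fills in a step the paper leaves implicit. You can also bypass the obstacle you flag: writing $p=rp_1$ and $q=rq_1$, the proof of Theorem \ref{disc_case} only needs that $\bm{z}^{\bm{k}-\bm{j}}\widetilde{p_1}$ and $\bm{z}^{\bm{l}-\bm{j}}\widetilde{q_1}$ share no polynomial factor, which your construction gives directly (the monomial supports are disjoint, $\gcd(p_1,q_1)=1$ passes to the reflections, and no $z_i$ divides a reflected polynomial), so you never need to classify inner divisors of rational inner functions.
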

\begin{proof}
Following Lemma \ref{gcd}, in a similar manner, we can consider $\phi_1$ and $\phi_2$ without any common inner factor. The proof then follows by using Theorem \ref{disc_case} and Theorem \ref{polyfinite}.
\end{proof}

We will now focus on when the product $(I_{H^2(\D^n)} - P_{\phi_1})(I_{H^2(\D^n)}  - P_{\phi_2})$ becomes a compact operator.
 
\begin{thm}\label{prod_comp}
Let $\phi_1, \phi_2$ be distinct inner functions on $\D^n$, then the following are equivalent:\\ \vspace{1mm}
$(i)$~$(I_{H^2(\D^n)} - P_{\phi_1})(I_{H^2(\D^n)}  - P_{\phi_2})$ is compact,\\ \vspace{1mm}
$(ii)$~$[T_{\phi_2}^*, T_{\phi_1}]$ is compact and $\ker T_{\phi_1}^* \cap \ker T_{\phi_2}^*$ is finite-dimensional.
\end{thm}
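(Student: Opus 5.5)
We compute the product directly in terms of Toeplitz operators and use the positivity trick from the proof of Corollary \ref{K finite}. Write $Q_i := I_{H^2(\D^n)} - P_{\phi_i}$, the projection onto $\ker T_{\phi_i}^*$, and set $A := Q_1Q_2$. Compactness of $A$ is equivalent to compactness of $AA^* = Q_1Q_2Q_1$, and also to compactness of $A^*A = Q_2Q_1Q_2$.

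The key identity is
\[
Q_2 P_{\phi_1} Q_2 = Q_2 T_{\phi_1} T_{\phi_1}^* Q_2 = (Q_2 T_{\phi_1})(Q_2T_{\phi_1})^*.
\]
Since $Q_2 T_{\phi_2} = 0$, we have
\[
Q_2 T_{\phi_1} = Q_2 T_{\phi_1} - T_{\phi_1}\cdot 0 .
\]
To exploit this, note that
\[
Q_2 T_{\phi_1} = T_{\phi_1} - T_{\phi_2}T_{\phi_2}^*T_{\phi_1} = T_{\phi_1} - T_{\phi_2}T_{\phi_1}T_{\phi_2}^* - T_{\phi_2}[T_{\phi_2}^*, T_{\phi_1}],
\]
and since $T_{\phi_2}T_{\phi_1} = T_{\phi_1}T_{\phi_2}$, this becomes
\[
Q_2T_{\phi_1} = T_{\phi_1}Q_2 - T_{\phi_2}[T_{\phi_2}^*,T_{\phi_1}].
\]
Hence
\[
Q_2T_{\phi_1}Q_1' \text{-type terms reduce to } -T_{\phi_2}[T_{\phi_2}^*,T_{\phi_1}] \text{ modulo } T_{\phi_1}Q_2 .
\]
More usefully, multiply on the left by $Q_2$:
\[
Q_2T_{\phi_1} = Q_2T_{\phi_1}Q_2 - Q_2T_{\phi_2}[\cdot] = Q_2T_{\phi_1}Q_2 ,
\]
because $Q_2T_{\phi_2}=0$. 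So $Q_2T_{\phi_1} = Q_2T_{\phi_1}Q_2$. Symmetrically, $Q_2 T_{\phi_1} Q_2 = T_{\phi_1}Q_2 - T_{\phi_2}[T_{\phi_2}^*,T_{\phi_1}]Q_2$.

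The plan is then as follows. First, relate $[T_{\phi_2}^*,T_{\phi_1}]$ to $P_{\phi_2}T_{\phi_1}Q_2$. From the identities above,
\[
T_{\phi_2}[T_{\phi_2}^*,T_{\phi_1}] = T_{\phi_1}Q_2 - Q_2T_{\phi_1}Q_2 = P_{\phi_2}T_{\phi_1}Q_2,
\]
so $[T_{\phi_2}^*,T_{\phi_1}]$ is compact iff $P_{\phi_2}T_{\phi_1}Q_2$ is compact, since $T_{\phi_2}$ is an isometry. Next, decompose the positive operator
\[
Q_2 = Q_2 T_{\phi_1}T_{\phi_1}^*Q_2 + Q_2Q_1Q_2 .
\]
Using $Q_2T_{\phi_1}Q_2 = T_{\phi_1}Q_2 - P_{\phi_2}T_{\phi_1}Q_2$, the operator $X := T_{\phi_1}Q_2 - Q_2T_{\phi_1}Q_2$ is exactly the compact candidate, and
\[
Q_2 - Q_2T_{\phi_1}Q_2T_{\phi_1}^*Q_2 = Q_2Q_1Q_2 .
\]
Now $Q_2T_{\phi_1}Q_2$ restricted to $\ker T_{\phi_2}^*$ is the compression $V$ of the isometry $T_{\phi_1}$, and $I - VV^*$ restricted to $\ker T_{\phi_2}^*$ equals $Q_2Q_1Q_2$ there. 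Also,
\[
V^*V = Q_2T_{\phi_1}^*Q_2T_{\phi_1}Q_2 = Q_2 - (P_{\phi_2}T_{\phi_1}Q_2)^*(P_{\phi_2}T_{\phi_1}Q_2).
\]
For (ii)$\Rightarrow$(i): $I - V^*V$ is compact, so $V$ is an essential isometry on $\ker T_{\phi_2}^*$. Moreover $\ker V^* \subseteq \ker T_{\phi_1}^*\cap\ker T_{\phi_2}^*$ (indeed $V^*x=0$ with $x\in\ker T_{\phi_2}^*$ gives $T_{\phi_1}^*x\in \phi_2H^2$; this step has to be checked via the co-range), and this space is finite-dimensional. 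Hence $V$ is Fredholm with finite-dimensional cokernel, and $VV^*$ is also $I$ modulo compacts, so $Q_2Q_1Q_2 = I - VV^*$ is compact. For (i)$\Rightarrow$(ii): the intersection $\ker T_{\phi_1}^*\cap\ker T_{\phi_2}^*$ is $\cll_{11}$, which by Corollary \ref{K finite} is finite-dimensional. If $I - VV^*$ is compact, then $V^*$ is an essential isometry; I then need $I - V^*V$ compact, which requires $V$ to be essentially normal. This is the hard step. I would try to get it from a Fredholm index argument, or else directly via Corollary \ref{K finite}: compactness of $T = P_{\cll_0}P_{\phi_2}|_{\cll_0}$ together with $\cll_0 = \phi_1(\ker T_{\phi_2}^*\ominus\ker T_{\bar\phi_2\phi_1})$ from Lemma \ref{lem_com}, expressing $[T_{\phi_2}^*,T_{\phi_1}]$ through $\sqrt{T(I-T)}$ and the blocks $\cll_{01}$, $\cll_{10}$ of Theorem \ref{Halmos_two_subspace}. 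I expect this converse direction, in particular controlling the infinite-dimensional pieces $\cll_{01}$ and $\cll_{10}$, to be the main obstacle.
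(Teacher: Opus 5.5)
Your setup is correct, and for (ii)$\Rightarrow$(i) it already gives a proof different from the paper's. With $K=\ker T_{\phi_2}^*$ and $V=Q_2T_{\phi_1}|_K$ you have $I_K-VV^*=Q_2Q_1Q_2|_K$ and $I_K-V^*V=X^*X|_K$, where $X=P_{\phi_2}T_{\phi_1}Q_2=T_{\phi_2}[T_{\phi_2}^*,T_{\phi_1}]$. The step you flagged is immediate: $K$ is $T_{\phi_1}^*$-invariant because $\phi_1\phi_2H^2(\D^n)\subseteq\phi_2H^2(\D^n)$, so $V^*=T_{\phi_1}^*|_K$ and $\ker V^*=\ker T_{\phi_1}^*\cap\ker T_{\phi_2}^*$. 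An essential isometry with finite-dimensional cokernel is Fredholm, so its image $\pi(V)$ in the Calkin algebra is unitary. The paper instead works with Halmos' operator $T=P_{\cll_0}P_{\phi_2}|_{\cll_0}$ and Corollary \ref{K finite}.

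The direction (i)$\Rightarrow$(ii), however, is not proved, and this is a genuine gap rather than a technicality. From $I-VV^*$ compact you only learn that $V$ has closed range and finite-dimensional cokernel. No index argument or other abstract operator-theoretic reasoning can upgrade this to compactness of $I-V^*V$: the backward shift of infinite multiplicity has $VV^*=I$, while $I-V^*V$ has infinite rank. What you need is that $\ker V$ be finite-dimensional.

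That is where the function theory enters, and it is the ingredient the paper uses. Since $\ker V=\{f\in\ker T_{\phi_2}^*:\phi_1f\in\phi_2H^2(\D^n)\}$, you get $\phi_1\ker V=\phi_1\ker T_{\phi_2}^*\cap\phi_2H^2(\D^n)=\bigl(\phi_1H^2(\D^n)\cap\phi_2H^2(\D^n)\bigr)\ominus\phi_1\phi_2H^2(\D^n)$. This space is zero precisely when $\phi_1,\phi_2$ are co-prime in the sense of Definition \ref{co-pr}. The paper obtains co-primeness from the finite-dimensionality of $\ker T_{\phi_1}^*\cap\ker T_{\phi_2}^*$, which you do have from (i), via Theorem \ref{co-primes} (Ahern--Clark for $n\ge 3$, Guo--Wang for $n=2$). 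With $\ker V=\{0\}$, $V$ is Fredholm, so $\pi(V)\pi(V)^*=1$ forces $\pi(V)^*\pi(V)=1$; hence $X$, and therefore $[T_{\phi_2}^*,T_{\phi_1}]$, is compact.

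Your fallback plan is also misdirected on two counts. First, the formula $\cll_0=\phi_1(\ker T_{\phi_2}^*\ominus\ker T_{\bar\phi_2\phi_1})$ from Lemma \ref{lem_com} is valid only under co-primeness. Second, the blocks $\cll_{01},\cll_{10}$ play no role: in Halmos' picture, $X$ is compact if and only if $T$ is compact and $\cll_{00}\ominus\phi_1\phi_2H^2(\D^n)$ is finite-dimensional. That last space is exactly $\phi_1\ker V$, which is what must be controlled.
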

\begin{proof}
From Corollary \ref{K finite}, we already know that the compactness of $(I_{H^2(\D^n)}  - P_{\phi_1})(I _{H^2(\D^n)} - P_{\phi_2})$ implies that $\ker T_{\phi_1}^* \cap \ker T_{\phi_2}^* = (I_{H^2(\D^n)}  - P_{\phi_1})H^2(\D^n) \cap (I_{H^2(\D^n)}  - P_{\phi_2})H^2(\D^n)$ is finite dimensional and $T = P_{\cll_0} P_{\phi_2}|_{\cll_0}$ is compact. The finite-dimensionality of $\ker T_{\phi_1}^* \cap \ker T_{\phi_2}^*$ implies $\phi_1 H^2(\D^n) \cap \phi_2 H^2(\D^n)  = \phi_1 \phi_2H^2(\D^n)$ from Theorem \ref{co-primes}. Thus, using identity (\ref{L_n}), we get
\[
\cll_0 = \overline{\mbox{ran}}[P_{\phi_1}, T_{\phi_2}].
\]
We claim that $T=  P_{\cll_0} P_{\phi_2}P_{\cll_0}$ is compact if and only if $[T_{\phi_2}^*, T_{\phi_1} ] $ is compact. Indeed,
\[
P_{\cll_0} P_{\phi_2}P_{\cll_0} \text{ compact} \implies [T_{\phi_2}^*, P_{\phi_1}]P_{\phi_2}[P_{\phi_1}, T_{\phi_2}] \text{ compact} \implies T_{\phi_2}^*[P_{\phi_1}, T_{\phi_2}] \text{ compact}.
\]
Now,
\[
T_{\phi_2}^*[P_{\phi_1}, T_{\phi_2}] = T_{\phi_2}^*P_{\phi_1}T_{\phi_2} - P_{\phi_1} = T_{\phi_2}^* \big(P_{\phi_1} - T_{\phi_2} P_{\phi_1}T_{\phi_2} ^* \big) T_{\phi_2},
\]
and
\[
\big(P_{\phi_1} - T_{\phi_2} P_{\phi_1}T_{\phi_2} ^* \big)^2 = P_{\phi_1} + T_{\phi_2} P_{\phi_1}T_{\phi_2} ^*   - P_{\phi_1}T_{\phi_2} P_{\phi_1}T_{\phi_2}^* - T_{\phi_2} P_{\phi_1}T_{\phi_2}^*P_{\phi_1} = P_{\phi_1} - T_{\phi_2} P_{\phi_1}T_{\phi_2}^*,
\]
where the last equality follows by using the invariance $ T_{\phi_2} \phi_1 H^2(\D^n) \subseteq \phi_1 H^2(\D^n)$. Therefore, the compactness of $T_{\phi_2}^*[P_{\phi_1}, T_{\phi_2}]$ implies that 
\[
 T_{\phi_2}^* \big(P_{\phi_1} - T_{\phi_2} P_{\phi_1}T_{\phi_2} ^* \big)  = [T_{\phi_2}^*, P_{\phi_1}]\text{ is compact}.
\]
This implies that $[T_{\phi_2}^*, P_{\phi_1}]T_{\phi_1} = [T_{\phi_2}^*, T_{\phi_1}]$ is compact. 
Conversely, if $[T_{\phi_2}^*, T_{\phi_1}]$ is compact, then
\begin{align*}
P_{\cll_0} P_{\phi_2}P_{\cll_0} = P_{\cll_0} P_{\phi_2}P_{\phi_1}P_{\cll_0} &= P_{\cll_0} T_{\phi_2} T_{\phi_2}^*T_{\phi_1}T_{\phi_1}^*P_{\cll_0} \\
&= P_{\cll_0} T_{\phi_2} T_{\phi_1}T_{\phi_2}^*T_{\phi_1}^*P_{\cll_0} + \text{compact}\\
&= P_{\cll_0} P_{\phi_1 \phi_2 H^2(\D^n)}P_{\cll_0} +  \text{compact}.
\end{align*}
Since, $\cll_0 \subseteq \ker T_{\phi_1 \phi_2}^*$, therefore $P_{\cll_0} P_{\phi_1 \phi_2 H^2(\D^n)}P_{\cll_0} =0$, and hence, $T = P_{\cll_0} P_{\phi_2}P_{\cll_0}$ is a compact operator. Thus, using Corollary \ref{K finite}, we get $(ii) \implies (i)$. This completes the proof.
\end{proof}
In the case of bidisc, the above characterization can be strengthened quite remarkably. The motivation comes from a recent characterization by Debnath et al. in \cite{DPS} which proved that if $(I_{H^2(\D^n)}  - P_{\phi_1})(I_{H^2(\D^n)}  - P_{\phi_2})$ is a finite-rank projection then $\phi_1$ and $\phi_2$ are separated and satisfies a finite Blaschke condition. As noted in the introduction, the following results were first observed by Zheng et al. in \cite{ZTLYZ}.
\begin{thm}\label{compact-proj1}
Let $\phi_1, \phi_2$ be distinct inner functions on $\D^2$, then the following are equivalent:\\ \vspace{1mm}
$(i)$~$(I_{H^2(\D^2)}  - P_{\phi_1})(I_{H^2(\D^2)}  - P_{\phi_2})$ is compact,\\ \vspace{1mm}
$(ii)$~$(I_{H^2(\D^2)} - P_{\phi_1})(I_{H^2(\D^2)}  - P_{\phi_2})$ is a finite-rank projection and thus, either one of the following conditions holds: 
\begin{enumerate}
\item[(a)] $\phi_1$ or $\phi_2$ is a constant function.
\item[(b)]$\phi_1$ and $\phi_2$ are separated and finite Blaschke products.
\end{enumerate}
\end{thm}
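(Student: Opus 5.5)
The implication (ii)$\Rightarrow$(i) is immediate, since a finite-rank operator is compact. For (i)$\Rightarrow$(ii) the plan is to use Theorem \ref{prod_comp} together with Gu's rigidity results for products of Hankel operators on the polydisc, and then the Debnath et al.\ characterization \cite[Theorem 3.2]{DPS}.

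Assume $(I-P_{\phi_1})(I-P_{\phi_2})$ is compact. If $\phi_1$ or $\phi_2$ is constant, then one factor is $0$ and we are in case (a). Otherwise, Theorem \ref{prod_comp} gives two facts:
\begin{itemize}
\item $\ker T_{\phi_1}^*\cap\ker T_{\phi_2}^*$ is finite-dimensional;
\item $[T_{\phi_2}^*,T_{\phi_1}]=H_{\bar\phi_2}^*H_{\bar\phi_1}$ is compact.
\end{itemize}
By Theorem \ref{co-primes}, $\phi_1$ and $\phi_2$ are co-prime.

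The key step is to upgrade the compactness of $H_{\bar\phi_2}^*H_{\bar\phi_1}$ on $H^2(\D^2)$ to the vanishing of the commutator. I would use the compact version of Gu's theorem \cite{Gu}. It says that $H_{\bar\phi_2}^*H_{\bar\phi_1}$ is compact only if, for each variable $z_i$, either one of $\phi_1,\phi_2$ is independent of $z_i$, or the slice functions in $z_i$ have a finite-rank Hankel structure. On $\D^2$ I expect two possible conclusions. Either $\phi_1$ and $\phi_2$ depend on disjoint variables; then $[P_{\phi_1},P_{\phi_2}]=0$ by Theorem \ref{polyfinite}. Or the relevant one-variable slices are finite Blaschke products. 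In the disjoint-variable case, write $\phi_1=\phi_1(z_1)$ and $\phi_2=\phi_2(z_2)$. Then
\[
\ker T_{\phi_1}^*\cap\ker T_{\phi_2}^*=(H^2\ominus\phi_1H^2)\otimes(H^2\ominus\phi_2H^2),
\]
and this is finite-dimensional exactly when both $\phi_1$ and $\phi_2$ are finite Blaschke products. That is case (b). In this case the commutativity shows $(I-P_{\phi_1})(I-P_{\phi_2})$ is the projection onto this intersection, so it is a finite-rank projection.

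The main obstacle is ruling out the mixed case, where compactness holds without the variables being separated. To handle it, I would combine Corollary \ref{K finite} with co-primeness. Co-primeness gives $\cll_0=\phi_1\overline{\mbox{ran}}[T_{\phi_1}^*,T_{\phi_2}]$. Compactness of $T$ together with $\ker T=\ker(I-T)=0$ forces $T(I-T)$ to be compact. I would then try to show that on $H^2(\D^2)$ the range closure of $H^*_{\bar\phi_2}H_{\bar\phi_1}$ is infinite-dimensional unless it vanishes. The idea is to tensor against the shift in the variable on which $\phi_1$ does not depend: a nonzero range would be invariant under the shift in that variable, hence infinite-dimensional. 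That would make $\sigma(T)$ accumulate only at $0$ while $I-T$ is also compact on an infinite-dimensional space, which is a contradiction. So $\cll_0=\{0\}$ and the commutator vanishes, which reduces the general case to the commuting one. Alternatively, once commutativity is known, one can invoke \cite[Theorem 3.2]{DPS} directly to obtain (a) or (b).
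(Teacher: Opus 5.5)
There is a genuine gap at the step you yourself call the key one: passing from compactness of $[T_{\phi_2}^*,T_{\phi_1}]$ to its vanishing. The ``compact version of Gu's theorem'' with a slice-wise finite-rank alternative is neither derived nor a precise citation, and your argument for ruling out the mixed case does not work.

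\begin{itemize}
\item From compactness of $T$ you do get that $T(I-T)$ is compact, but nothing makes $I-T$ compact. On the contrary, if $\cll_0$ is infinite-dimensional and $T$ is compact, then $I-T$ is Fredholm and cannot be compact.
\item A diagonal $T$ with entries $\tfrac12,\tfrac13,\dots$ is a compact positive contraction with $\ker T=\ker(I-T)=\{0\}$ on an infinite-dimensional space. So infinite-dimensionality of $\cll_0$, or of the range of $H^*_{\bar\phi_2}H_{\bar\phi_1}$, gives no contradiction with compactness.
\item You are transplanting the finite-rank argument of Corollary \ref{K_finite} to the compact setting. There, injectivity of $T$ and $I-T$ really does force $\dim\cll_0<\infty$; under mere compactness it does not.
\item In the mixed case both symbols may depend on both variables. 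Then there is no variable ``on which $\phi_1$ does not depend,'' so the shift-invariance idea does not even start.
\end{itemize}

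The paper closes this step with an external rigidity result. By Gu--Zheng \cite[Corollary 2]{GuZ}, on the bidisc the semi-commutator $[T_{\phi_2}^*,T_{\phi_1}]=T_{\phi_1\bar\phi_2}-T_{\phi_1}T_{\bar\phi_2}$ is compact only if it is zero. (Incidentally, this operator equals $H^*_{\bar\phi_1}H_{\bar\phi_2}$, the adjoint of what you wrote; that slip is harmless.) Once this is available, the rest of your outline matches the paper:
\begin{itemize}
\item commutativity makes $(I-P_{\phi_1})(I-P_{\phi_2})$ a compact, hence finite-rank, projection;
\item \cite[Corollary 2]{Gu} gives separated variables;
\item the tensor factorization then yields (a) or (b).
\end{itemize}

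If you want to exploit the shift yourself, the correct mechanism is not the size of the range. Suppose $\phi_1=\phi_1(z_1)$. Then $B=[T^*_{\phi_1},T_{\phi_2}]$ commutes with $T_{z_2}$, so $B=T_{z_2}^{*k}BT_{z_2}^k$ for every $k$. Since $T_{z_2}^k\to 0$ weakly and $B$ is compact, $\|Bx\|\le\|BT_{z_2}^kx\|\to 0$, i.e.\ $B=0$. This disposes of the case where one symbol depends on a single variable. The case where both symbols genuinely involve both variables is exactly where Gu--Zheng is needed, and your proposal offers no substitute for it.
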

\begin{proof}
We already know from the above result that $(I_{H^2(\D^2)}  - P_{\phi_1})(I_{H^2(\D^2)}  - P_{\phi_2})$ is compact if and only if $[T_{\phi_2}^*, T_{\phi_1}]$ is compact and $\ker T_{\phi_1}^* \cap \ker T_{\phi_2}^*$ is finite-dimensional. However, using \cite[Corollary 2]{GuZ}, we can conclude that $[T_{\phi_2}^*, T_{\phi_1}]$ is compact if and only if $[T_{\phi_2}^*, T_{\phi_1}] = 0$, which further implies that $(I_{H^2(\D^2)}  - P_{\phi_1})(I_{H^2(\D^2)}  - P_{\phi_2})$ is a projection. Indeed,
\begin{align*}
(I_{H^2(\D^2)}  - P_{\phi_1})(I_{H^2(\D^2)}  - P_{\phi_2}) &= I_{H^2(\D^2)} - P_{\phi_1} - P_{\phi_2} + P_{\phi_1}P_{\phi_2} \\
&= I_{H^2(\D^2)} - P_{\phi_1} - P_{\phi_2} + T_{\phi_1}T_{\phi_1}^* T_{\phi_2}T_{\phi_2}^*\\
&= I_{H^2(\D^2)} - P_{\phi_1} - P_{\phi_2} + T_{\phi_1}T_{\phi_2}T_{\phi_1}^* T_{\phi_2}^*\\
&= I_{H^2(\D^2)} - P_{\phi_1} - P_{\phi_2} + T_{\phi_2} T_{\phi_2}^* T_{\phi_1}T_{\phi_1}^*\\
&= I_{H^2(\D^2)}  - P_{\phi_1} - P_{\phi_2} + P_{\phi_2}P_{\phi_1} \\
&=   (I_{H^2(\D^2)}  - P_{\phi_2})(I_{H^2(\D^2)}  - P_{\phi_1}).
\end{align*}
Thus, we get $(I_{H^2(\D^2)}  - P_{\phi_1})(I_{H^2(\D^2)}  - P_{\phi_2})$ is a compact projection, so it must be finite-rank. Moreover, by using \cite[Corollary 2]{Gu}, we can conclude that $[T_{\phi_1}^*, T_{\phi_2}] = 0$ implies for each $i=1,2$, either $\bar{\phi_1}(\bm{z})$ or $\bar{\phi_2}(\bm{z})$ is analytic in $z_i$. In other words, $\phi_1$ and $\phi_2$ are dependent on separate set of variables, and therefore,
\[
(I_{H^2(\D^2)}  - P_{\phi_1})(I_{H^2(\D^2)}  - P_{\phi_2}) = (I_{H^2(\D)} - P_{\phi_1}) \otimes (I_{H^2(\D)} - P_{\phi_2}) \in \clb(H^2(\D) \otimes H^2(\D)).
\]
Hence, if the product is zero, then one of the symbols is a constant function. If it is non-zero then $\phi_1$ and $\phi_2$ both must be finite-Blaschke products depending on distinct variables. The converse part is straightforward. This completes the proof.
\end{proof}

\begin{thm}\label{compact-proj2}
Let $\phi_1, \phi_2$ be inner functions on $\D^n$, where $n \geq 3$, then the following are equivalent:\\ \vspace{1mm}
$(i)$~$(I_{H^2(\D^n)}  - P_{\phi_1})(I_{H^2(\D^n)}  - P_{\phi_2})$ is finite-rank,\\ \vspace{1mm}
$(ii)$~$(I_{H^2(\D^n)}  - P_{\phi_1})(I_{H^2(\D^n)}  - P_{\phi_2})= 0$.
\end{thm}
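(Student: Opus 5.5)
The direction $(ii)\Rightarrow(i)$ is trivial, so the work is in $(i)\Rightarrow(ii)$. Suppose $(I-P_{\phi_1})(I-P_{\phi_2})$ is finite-rank, hence compact.

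First, Theorem \ref{prod_comp} tells us that $\ker T_{\phi_1}^*\cap\ker T_{\phi_2}^*$ is finite-dimensional and that $[T_{\phi_2}^*,T_{\phi_1}]$ is compact. The first condition means that the $T_{z_1},\dots,T_{z_n}$-invariant subspace $\phi_1H^2(\D^n)\vee\phi_2H^2(\D^n)$ has finite co-dimension. This subspace is generated by two functions, and $2<n$ because $n\ge 3$. By the Ahern--Clark result \cite{AC}, already used in the proof of Theorem \ref{co-primes}, it must therefore be all of $H^2(\D^n)$. Hence
\[
\ker T_{\phi_1}^*\cap\ker T_{\phi_2}^*=\{0\}.
\]
In the Halmos decomposition this says $\cll_{11}=\{0\}$. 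By Theorem \ref{prod_inner}, $\phi_1$ and $\phi_2$ are also co-prime.

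Next, Corollary \ref{K finite} shows that $T=P_{\cll_0}P_{\phi_2}|_{\cll_0}$ is finite-rank. Since $\ker T=\{0\}$ by Theorem \ref{Halmos_two_subspace}, the space $\cll_0$ must be finite-dimensional. Corollary \ref{K_finite} then gives that $[P_{\phi_1},P_{\phi_2}]$ is finite-rank. Because $\phi_1,\phi_2$ are co-prime and $n>1$, Theorem \ref{polyfinite} yields $[P_{\phi_1},P_{\phi_2}]=0$, i.e. $\cll_0=\{0\}$, and $\phi_1,\phi_2$ depend on disjoint sets of variables.

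With $\cll_0=\{0\}$ and $\cll_{11}=\{0\}$, the block formula for $(I-P_1)(I-P_2)$ in the proof of Corollary \ref{K finite} reduces to the identity on $\cll_{11}$, which is zero. This gives $(ii)$.

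As a sanity check, one can also see the conclusion via tensor products. With disjoint variables, the product is $(I-P_{\phi_1})\otimes(I-P_{\phi_2})\otimes I$ on $H^2$ of the separated variable groups. Unless it vanishes, the nonzero finite-rank tensor factors must account for all $n\ge3$ variables, and with only two groups that forces some factor to be an infinite-dimensional $H^2(\D^k)$, contradicting finite rank.

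The main obstacle is the first step: justifying that Ahern--Clark applies to the span closure of two singly generated invariant subspaces and forces trivial co-dimension. Everything after that is an assembly of the earlier results.
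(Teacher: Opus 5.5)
Your proof is correct, but it takes a different route from the paper's.

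\textbf{The paper's route.} The paper first asserts, by adapting the proof of Theorem \ref{prod_comp} to finite rank, that $[T_{\phi_2}^*,T_{\phi_1}]$ is finite-rank. It then invokes Gu's results to get $[T_{\phi_2}^*,T_{\phi_1}]=0$, so $\phi_1,\phi_2$ depend on disjoint variable sets $\mathcal{I},\mathcal{J}$. It writes the product as $(I-P_{\phi_1})\otimes(I-P_{\phi_2})\otimes I$ and splits into cases. If $\mathcal{I}\cup\mathcal{J}$ misses a variable, the identity factor forces the product to vanish. Otherwise one group contains at least two variables, and Ahern--Clark with a single generator forces the corresponding $I-P_{\phi_i}$ to be zero.

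\textbf{Your route.} You spend Ahern--Clark at the outset, on the two-generator subspace $\phi_1H^2\vee\phi_2H^2$. This upgrades ``$\cll_{11}$ finite-dimensional'' to $\cll_{11}=\{0\}$ and gives co-primeness through Theorem \ref{prod_inner}. Injectivity of $T$ then converts ``$T$ finite-rank'' into ``$\cll_0$ finite-dimensional''. Theorem \ref{polyfinite} gives $\cll_0=\{0\}$, and the Halmos block form of the product collapses to zero.

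\textbf{Comparison.} Both routes rest on the same two external inputs: Ahern--Clark, and Gu's finite-rank theorem, which in your case sits inside Theorem \ref{polyfinite}. Yours avoids the tensor factorization, the variable-counting case split, and the finite-rank analogue of Theorem \ref{prod_comp} that the paper asserts ``in a similar manner''. In return, the paper's route displays the product explicitly as a tensor product, paralleling Theorem \ref{compact-proj1}.

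The step you flagged as the main obstacle is routine. Polynomials are dense and $T_{\phi_i}$ is bounded, so the invariant subspace generated by $\phi_1,\phi_2$ is exactly $\phi_1H^2\vee\phi_2H^2$. The paper uses Ahern--Clark in this same form in Theorem \ref{co-primes}.

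One caution about your closing ``sanity check''. Infinite-dimensionality of $H^2(\D^k)$ alone does not rule out a nonzero finite-rank factor $I-P_{\phi}$; for $k=1$ this happens, e.g.\ $\phi=z$. What is needed there is Ahern--Clark applied to a single inner function in $k\ge 2$ variables. Your main argument does not use that remark, so its validity is unaffected.
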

\begin{proof}
Following the proof of Theorem \ref{prod_comp}, in a similar manner, the product $(I_{H^2(\D^n)} - P_{\phi_1})(I_{H^2(\D^n)}  - P_{\phi_2})$ is finite-rank if and only if $[T_{\phi_2}^*, T_{\phi_1}]$ is finite-rank and $\ker T_{\phi_1}^* \cap \ker T_{\phi_2}^*$ is finite-dimensional. However, using \cite[Corollary 1]{Gu}, we can again conclude that $[T_{\phi_2}^*, T_{\phi_1}] = 0$, which again by using \cite[Corollary 2]{Gu}, would imply that for each $i=1,\ldots,n$, either $\bar{\phi_1}(\bm{z})$ or $\bar{\phi_2}(\bm{z})$ is analytic in $z_i$. In other words, $\phi_1$ and $\phi_2$ are dependent on disjoint set of variables, say $\mathcal{I}, \mathcal{J}, \subseteq \{z_1, \ldots,z_n\}$, respectively such that $\mathcal{I} \cap \mathcal{J} = \emptyset$, and therefore,
\[
\begin{split}
(I_{H^2(\D^n)} - P_{\phi_1})(I_{H^2(\D^n)} - P_{\phi_2}) = (I_{H_{\mathcal{I}}^2(\D^n)}- P_{\phi_1}) \otimes (I_{H_{\mathcal{J}}^2(\D^n)} - P_{\phi_2}) \otimes I_{H_{\{z_1,\ldots,z_n\} \setminus \mathcal{I} \cup \mathcal{J}}^2(\D^n)} \\
 \in \clb(H_{\mathcal{I}}^2(\D^n) \otimes H_{\mathcal{J}}^2(\D^n) \otimes H_{\{z_1,\ldots,z_n\} \setminus \mathcal{I} \cup \mathcal{J}}^2(\D^n)).
\end{split}
\]
where, $H_{\mathcal{I}}^2(\D^n)$ denote the functions in $H^2(\D^n)$ depending only on the variables in $\mathcal{I}$. Clearly, $(I_{H^2(\D^n)}  - P_{\phi_1})(I_{H^2(\D^n)}  - P_{\phi_2}) $ cannot be finite-rank unless zero in the case when $\{z_1, \ldots,z_n\} \neq \mathcal{I} \cup \mathcal{J}$. In the other case, both $(I_{H^2(\D^n)}  - P_{\phi_1})$ and $(I _{H^2(\D^n)} - P_{\phi_2})$ must be finite-rank projections. Hence, $\ker T_{\phi_1}^*$ and $\ker T_{\phi_2}^*$ must be finite-dimensional subspaces of $H_{\mathcal{I}}^2(\D^n)$ and $H_{\mathcal{J}}^2(\D^n)$ respectively. Since, at least one of $H_{\mathcal{I}}^2(\D^n)$ and $H_{\mathcal{J}}^2(\D^n)$ are dependent on more than one variable, Ahern and Clark's result will imply either $(I _{H^2(\D^n)} - P_{\phi_1})$ or $(I _{H^2(\D^n)} - P_{\phi_2})$ is zero, and so, $
(I_{H^2(\D^n)}  - P_{\phi_1})(I_{H^2(\D^n)}  - P_{\phi_2}) =0.
$
This completes the proof.
\end{proof}

\smallskip
\noindent\textsf{Declaration of AI usage:} The ChatGPT Go version was used to conduct a literature review on rational inner functions on the polydisc. In particular, it suggested the article \cite{AJMJES}, which contains basic properties of polynomial reflections in several variables that were useful in proving (the probably well-known) Lemma 5.9. All other results in this article are proved by us without using any AI tools and take full responsibility for their correctness.
\smallskip

\noindent\textsf{Acknowledgement:} The authors thank Dr. Sudip Bhuia and Dr. Peiran Zhang for pointing out an error in an earlier version on the essential commutativity of projections. This work is supported by the Department of Science and Technology via the INSPIRE faculty fellowship DST/INSPIRE/04/2019/000769 and IIT Palakkad faculty seed grant IITP/2024/00528 of the second author.


\begin{thebibliography}{99}

\bibitem{AJMJES}
J. Agler, J.E. McCarthy, M. Stankus,
\emph{Toral algebraic sets and function theory on polydisks.}, The Journal of Geometric Analysis, 16(4), 551--562.

\bibitem{AC}
P. Ahern, D.N. Clark, 
\emph{Invariant subspaces and analytic continuation in several variables}, J. Math. Mech. 19 (1969/70), 963--969.


\bibitem{ACL}
E. Andruchow, E. Chiumiento, M.E. Di Iorio y Lucero,
\emph{Essentially commuting projections},
Journal of Functional Analysis, Volume 268, Issue 2, 2015, Pages 336--362.

\bibitem{ACG}
E. Andruchow, E. Chiumiento, G. Larotonda, 
\emph{Geometric significance of Toeplitz kernels.}
Journal of Functional Analysis, 275 (2018), no. 2, 329--355.


\bibitem{ASS}
J. Avron, R. Seiler, B. Simon, 
\emph{The index of a pair of projections}, Journal of Functional Analysis 120 (1) (1994) 220--237.



\bibitem{RV}
R.V. Bessonov,
\emph{Truncated Toeplitz operators of finite rank}, Proceedings of the American Mathematical Society, (2014): 1301--1313.

\bibitem{Beurling}
A. Beurling, 
\emph{On two problems concerning linear transformations in Hilbert space.} Acta Math. 81 (1948), 239--255.


\bibitem{BS}
A. Böttcher, I.M. Spitkovsky,
\emph{A gentle guide to the basics of two projections theory}, Linear Algebra and its Applications, Volume 432, Issue 6, 2010, Pages 1412--1459, ISSN 0024-3795.

\bibitem{CIL}
Y. Chen, K.J. Izuchi, Y.J. Lee
\emph{Kernels of Toeplitz operators on the Hardy space over the bidisk}, Journal of Functional Analysis, Volume 272, Issue 9,
2017, Pages 3869--3903.


\bibitem{BH}
A. Brown, P.R. Halmos, 
\emph{Algebraic properties of Toeplitz operators}, Crelle's journal (1964), Volume: 213, page 89--102.

\bibitem{Coburn}
LA. Coburn, 
\emph{Weyl's theorem for nonnormal operators. } Michigan Mathematical Journal 13.3 (1966): 285--288.




\bibitem{DPS}
R. Debnath, D. K. Pradhan, J. Sarkar, 
\emph{Pairs of inner projections and
two applications},  Journal of Functional Analysis. 286 (2024), no. 2, Paper No. 110216, 26 pp. 



\bibitem{GMR}
S. R. Garcia, J. Mashreghi, W. T. Ross, 
\emph{Introduction to model spaces
and their operators}, Vol. 148, Cambridge University Press, 2016.


\bibitem{Gu}
C. Gu, 
\emph{Some algebraic properties of Toeplitz and Hankel operators on polydisk.} Arch.Math. 80, 393--405 (2003). 

\bibitem{GW}
K. Guo and P. Wang,
\emph{Defect operators and Fredholmness for Toeplitz pairs with inner symbols}. J. Operator Theory 58 (2007), no. 2, 251--268.



\bibitem{GuZ}
 C. Gu, D. Zheng,
\emph{The semi-commutator of Toeplitz operators on the bidisc}. J. Operator Theory 38 (1997), no. 1, 173--193.

\bibitem{Halmos} 	
P. R. Halmos, 
\emph{Two subspaces}, Transactions of the American Mathematical Society 144 (1969) 381--389.


\bibitem{Knese}
G. Knese,
\emph{Rational inner functions on the polydisk -- a survey}, 2024. Living reference In: Alpay, D., Sabadini, I., Colombo, F. (eds) Operator Theory. Springer, 

\bibitem{ZTLYZ}
Y. Lu, R. Tian, Y. Yang, P. Zhang, C. Zu,
\emph{Compactness of products and commutators of inner projections}, https://doi.org/10.48550/arXiv.2604.22284.






\bibitem{Richman}
D.R. Richman, 
\emph{A new proof of a result about Hankel operators.}
Integral Equations Operator Theory 5 (1982), no. 6, 892--900.

\bibitem{Rudin}
W. Rudin, 
\emph{Function theory in polydiscs.}
W. A. Benjamin, Inc., New York-Amsterdam, 1969. vii+188 pp.



\end{thebibliography}
\end{document}